\newcolumntype{L}[1]{>{\raggedright\let\newline\\\arraybackslash\hspace{0pt}}m{#1}}
\newcolumntype{C}[1]{>{\centering\let\newline\\\arraybackslash\hspace{0pt}}m{#1}}
\newcolumntype{R}[1]{>{\raggedleft\let\newline\\\arraybackslash\hspace{0pt}}m{#1}}
\newtheorem{Theorem}{Theorem}[section]
\newtheorem{Proposition}[Theorem]{Proposition}
\newtheorem{Remark}[Theorem]{Remark}
\newtheorem{Lemma}[Theorem]{Lemma}
\newtheorem{Corollary}[Theorem]{Corollary}
\newtheorem{Definition}[Theorem]{Definition}
\newtheorem{Example}[Theorem]{Example}
\let\expandafter\oldproof\csname\string\proof\endcsname
\let\oldendproof\endproof
\renewenvironment{proof}[1][\proofname]{
\oldproof[\ttfamily\scshape \bf #1.]
}{\oldendproof}
\def\ve{\varepsilon}
\def\tilde{\widetilde}
\def\emp{\emptyset}
\def\dom{{\rm dom}\,}
\def\epi{{\rm epi\,}}
\def\rge{{\rm rge\,}}
\def\ox{\overline{x}}
\def\oy{\overline{y}}
\def\oz{\overline{z}}
\def\disp{\displaystyle}
\def\tto{\rightrightarrows}
\def\Bar{\overline}
\def\ra{\rangle}
\def\la{\langle}
\def\ve{\varepsilon}
\def\epsilon{\varepsilon}
\def\ox{\bar{x}}
\def\oy{\bar{y}}
\def\oz{\bar{z}}
\def\ov{\bar{v}}
\def\rri{\rightrightarrows}
\def\gph{\mbox{\rm gph}\,}
\def\epi{\mbox{\rm epi}\,}
\def\dom{\mbox{\rm dom}\,}
\def\argmin{\mathop{{\rm argmin}}}
\def\dn{\downarrow}
\def\O{\Omega}
\def\emp{\emptyset}
\def\oR{\Bar{\R}}
\def \N{{\rm I\!N}}
\def \R{{\rm I\!R}}
\def\Limsup{\mathop{{\rm Lim}\,{\rm sup}}}
\def\Liminf{\mathop{{\rm Lim}\,{\rm inf}}}
\def\Lim{\mathop{{\rm Lim}}}
\def\Limsup{\mathop{{\rm Lim}\,{\rm sup}}}
\numberwithin{equation}{section}
\def\gliminf{\mathop{{\rm g} \text{-}\,{\rm liminf}}}
\def\glimsup{\mathop{{\rm g} \text{-}\,{\rm limsup}}}
\def\eliminf{\mathop{{\rm e} \text{-}\,{\rm liminf}}}
\def\elimsup{\mathop{{\rm e} \text{-}\,{\rm limsup}}}
\def\elim{\mathop{{\rm e} \text{-}\,{\rm lim}}}
\def\glim{\mathop{{\rm g} \text{-}\,{\rm lim}}}
\def\qua{\mathrm{quad}}
\def\sgn{\mathrm{sgn}}
\numberwithin{equation}{section}
\title{\bf
Generalized Twice Differentiability and Quadratic Bundles in Second-Order Variational Analysis}
\author{Pham Duy Khanh\footnote{Group of Analysis and Applied Mathematics, Department of Mathematics, Ho Chi Minh City University of Education, Ho Chi Minh City, Vietnam. E-mail: khanhpd@hcmue.edu.vn.} \quad Boris S. Mordukhovich\footnote{Department of Mathematics, Wayne State University, Detroit, Michigan, USA. E-mail: aa1086@wayne.edu. Research of this author was partly supported by the US National Science Foundation under grant DMS-2204519, by the Australian Research Council under Discovery Project DP-190100555, and by Project 111 of China under grant D21024.}\quad Vo Thanh Phat\footnote{Department of Mathematics  and Statistics, University of North Dakota, Grand Forks, North Dakota, USA. E-mail: thanh.vo.1@und.edu.} \quad Le Duc Viet\footnote{Department of Mathematics, Wayne State University, Detroit, Michigan, USA. E-mail: vietle@wayne.edu. Research of this author was partly supported by the US National Science Foundation under grant DMS-2204519.}}
\begin{document}
\maketitle

\noindent
{\small{\bf Abstract}. In this paper, we investigate the concepts of generalized twice differentiability and quadratic bundles of nonsmooth functions that have been very recently proposed by Rockafellar in the framework of second-order variational analysis. These constructions, in contrast to second-order subdifferentials, are defined in primal spaces. We develop new techniques to study generalized twice differentiability for a broad class of prox-regular functions, establish their novel characterizations. Subsequently, quadratic bundles of prox-regular functions are shown to be nonempty, which provides the ground of potential applications in variational analysis and optimization.\\[1ex]
{\bf Key words}. Set-valued and variational analysis, prox-regularity, variational convexity, tilt stability, epi-convergence, subderivatives, generalized twice differentiability, quadratic bundles, Moreau envelopes\\[1ex]
{\bf Mathematics Subject Classification (2020)}} 49J52, 49J53, 90C31

\section{Introduction}\label{intro}
\vspace*{-0.1in}

Starting from the original works \cite{2sd2,2sd}, {\em second-order epi-derivatives/subderivatives}  have been designed as neoclassical counterparts of the Hessians, while concerning \textit{epigraphs} of second-order difference quotients in contrast to their \textit{graph} in the classical approach. The corresponding property, labeled as \textit{twice epi-differentiability}, holds in broad settings such as maxima of $\mathcal{C}^2$-smooth functions, compositions of piecewise linear-quadratic and smooth functions  \cite{Rockafellar98},  and---more generally---{\em parabolically regular} functions; see \cite{mms,sarabi}. However, generalized second-order derivatives of this {\em primal type} suffer from the {\em lack of robustness}, which prevents developing satisfactory {\em calculus rules} and broad applications. This is contrary to the {\em second-order subdifferential} by Mordukhovich, a robust construction of the {\em dual type}; see \cite{Mordukhovich24} and the references therein. In particular, second-order subderivatives are capable of characterizing \textit{strong local minimizers} yet fail to do so for \textit{tilt-stable local minimizers}, a variational stability property requiring stable behavior of local minimizers under tilted/linear shifts of objective functions; see the landmark work \cite{tilt} and many subsequent publications as, e.g., \cite{drus1,gferer,Mordukhovich24,tiltnghia,tiltrocka} among others.
\vspace*{0.03in}

This calls for the need of more robust generalized second-order constructions of primal and/or primal-dual types
to deal with robust stability and numerical issues of variational analysis and optimization. Quite recently \cite{roc}, Rockafellar introduces novel generalized second-order derivatives, called \textit{quadratic bundles}, which are defined as collections of \textit{epigraphical limits} of sequences of second-order subderivatives of primal-dual pairs converging to the point in question, with the objective function being {\em generalized twice differentiable} along these sequences. His motivations come from the study of  \textit{variational sufficiency} in problems of parametric composite optimization with applications to convergence of numerical algorithms of proximal and augmented Lagrangian types.\vspace*{0.03in}

In this paper, we explore the concept of quadratic bundles as the basis for broad upcoming applications. Our first goal is to provide a systematic study of {\em generalized twice differentiability}, a concept that requires both \textit{twice epi-differentiability} and the derivative itself being a \textit{generalized quadratic form}, which serves as a crucial building block for our upcoming endeavors. Although the fundamental papers \cite{genhes,2sd2} provide much insight into these two concepts, the precise formulation and name are coined recently in \cite{roc}. We now systematically investigate quadratic bundles, with a revision of this construction that operates properly in the absence of the subdifferential continuity inherent in convex functions. As the cumulation of the study below, {\em prox-regular functions} are shown to always have {\em nonempty quadratic bundles}, which testifies to the importance and usefulness of this concept.\vspace*{0.03in}

To achieve our goals in the general setting of prox-regular functions, the notions of \textit{Moreau envelopes} and associated \textit{proximal mappings} are of paramount importance. Originally introduced by Moreau in \cite{moreau62}, proximal mappings of convex functions were viewed as generalizations of projections. The associated epi-addition (infimal convolution) operation in Moreau envelopes has been also implemented but not yet as an approximation and regularization tool for the function in question. To the best of our knowledge, this crucial interpretation appeared for the first time in the subsequent works by Attouch and Wets for convex and nonconvex functions \cite{attouch84,attouchwets83}. Moreau envelopes play a significant role in our developments below.\vspace*{0.03in}

In \cite{proxreg}, Poliquin and Rockafellar introduce the class of extended-real-valued \textit{prox-regular functions}, which by now has been recognized as the main class of functions in second-order variational analysis. This class encompasses, in particular, $\mathcal{C}^{1,1}$-smooth functions and its lower-$\mathcal{C}^2$ counterparts, strongly amenable functions, convex and variationally ones, etc. It is verified in the same paper \cite{proxreg} that Moreau envelopes of prox-regular functions are (under mild additional assumptions) of class $\mathcal{C}^{1,1}$ around the reference point, i.e., ${\cal C}^1$-smooth with locally continuous gradients. The recent paper \cite{roc} reveals certain connections between the newly introduced quadratic bundles of functions and the  {\em Hessian bundles} of the associated Moreau envelopes, which are conveniently guaranteed to be nonempty with the $\mathcal{C}^{1,1}$-smoothness of the envelopes. These facts and observations confirm the irreplaceable role of Moreau envelopes and the underlying assumption of prox-regularity, which we employ and develop throughout our work.\vspace*{0.03in}

The rest of the paper is organized as follows. Section~\ref{sec:prel} equips the reader with some basic material of variational analysis and generalized differentiation that is broadly exploit in deriving the major results below. Section~\ref{sec:varsconv} revisits the very recent unification of variational convexity and prox-regularity and their connection with second-order subderivatives while deriving some new facts in this direction. Section~\ref{sec:gtd} addresses generalized twice differentiable functions with establishing their important properties. This study is continued in Section~\ref{gen-moreau} by involving prox-regularity and Moreau envelopes. Here we obtain the crucial characterization of generalized twice differentiability for prox-regular functions via classical twice differentiability of Moreau envelopes. The obtained result serves as the central building block for the subsequent sections. Section~\ref{sec:quad} offers a revised version of quadratic bundles for nonconvex functions and presents new facts and instructive examples for this second-order generalized derivative construction. Among the most significant results of this section is establishing the nonemptiness of quadratic bundles associated with any extended-real-valued prox-regular function. Section \ref{sec:conclusion} recaps the major ideas and results achieved in the paper lists some directions of our future research.

\section{Preliminaries from Variational Analysis}\label{sec:prel}

Throughout the paper, we use the standard terminology and notation of variational analysis and generalized differentiation; see \cite{Mordukhovich06,Mordukhovich18,Rockafellar98}. Recall that $\R^n$ stands for the $n$-dimensional space with the inner product $\la \cdot, \cdot \ra$ and the Euclidean (unless otherwise stated) norm (norm-2) written as  $\|\cdot\|$. The symbol $B(\bar{x},r)$ signifies the open ball centered at $\bar{x}$ with radius $r > 0$. We write $x_k\xrightarrow{C}\bar{x}$ when $x_k\to\ox$ as $k\to\infty$ with $x_k\in C$ for all $k\in\N:=\{1,2,\ldots\}$.  For a (linear) subspace $L\subset\R^n$, recall the {\em Pythagorean formula}
\begin{equation}\label{eq:pythagoras}\|w\|^2 = \|P_Lw\|^2 + \|P_{L^\perp}w\|^2\;\mbox{ whenever }\;w\in\R^n,
\end{equation}
where the {\em orthogonal projector} $P_L$ is the self-adjoint linear operator whose range is $L$ and whose kernel is $L^\perp:=\{w\in\R^n\;|\;\la w,x\ra=0,\;x\in L\big\}$.\vspace*{0.03in}

Given now an extended-real-value function $f: \R^n \to \R \cup \{\infty\} := \overline{\R}$, the \textit{domain} and \textit{epigraph} of $f$ are defined, respectively, by 
$$
\dom f :=\big\{x \in \R^n\;\big|\;f(x) < \infty\big\},\quad\epi f := \big\{(x,\alpha) \in \R^n \times \R\;\big|\;\alpha \ge f(x)\big\}. 
$$
The {\em properness} of $f$ means that $\dom f\ne\emp$, which we always assume. Recall that $f$ is {\em lower semicontinuous} (l.s.c.) at $\ox$ if $\liminf\limits_{x \to \bar{x}} f(x) \ge f(\bar{x})$. This property holds on a set $C$ if $f$ is l.s.c.\ at any $\ox\in C$.\vspace*{0.03in}

The (general/basic, limiting, Mordukhovich) {\em subdifferential} of $f\colon\R^n\to\oR$ at $\ox\in\dom f$ is defined by
\begin{equation}\label{eq:subdiff}
\partial f(\bar{x}):=\Big\{v\in\R^n\;\Big|\;\exists\,x_k\to\ox,\;v_k\to v\;\mbox{ with }\;\disp\liminf_{x\to x_k} \frac{f(x)-f(x_k)-\la v_k,x-x_k\ra}{\|x-x_k\|}\ge 0\Big\}.
\end{equation}
When $f$ is convex, \eqref{eq:subdiff} reduces to the classical subdifferential of convex analysis, while otherwise $\partial f(\ox)$ is often nonconvex; e.g., we have $\partial f(0)=\{-1,1\}$ for the function $f(x)=-|x|$, $x\in\R$. Nevertheless, the subdifferential $\partial f$ satisfies comprehensive calculus rules for the general class of l.s.c.\ functions. This calculus is based on variational/extremal principles and techniques of variational analysis; see \cite{Mordukhovich06,Mordukhovich18,Rockafellar98}.\vspace*{0.03in}

Let $F:\R^n \rightrightarrows \R^m$ be a set-valued mapping/multifunction with the {\em graph} $\gph F :=\{(x,y) \in \R^n \times \R^m\;|\;y \in F(x)\}$. The {\em graphical derivative} of $F$ at $(\ox,\oy)\in\gph F$ is
\begin{equation}\label{eq:graphderivative}
DF(\bar{x},\bar{y})(u): = \big\{v \in \R^n\big|\;(u,v) \in T_{\gph F}(\bar{x},\bar{y})\big\},\quad u\in\R^n,
\end{equation}
where the {\em tangent/contingent cone} to a set $\O\subset\R^d$ at $\oz\in\O$ is defined by
\begin{equation*}
T_\O(\bar{z}) :=\big\{ w\in\R^d\;\big|\;\exists\,z_k\xrightarrow{\O} \bar{x},\; t_k \downarrow 0\;
\mbox{ with }\;(x_k - \bar{x})/t_k \to w\big\}
\end{equation*}

Next we discuss the concept of \textit{epi-convergence} for sequences of functions, which plays a crucial role in defining first-order and second-order subderivatives; more details can be found in \cite{attouch84,Rockafellar98}. To begin with, recall {\em the outer limit} and {\em inner limit} of the set sequence $\{C_k\}\subset\R^n$ defined, respectively, by
$$ 
\Limsup\limits_{k \to \infty} C_k: =\big\{ x\;\big|\;\exists\,x_k \in C_k \;\text{ and a subsequence }\; 
\{x_{k_j}\}\;\mbox{ with }\;x_{k_j} \to x\;\mbox{ as }\;j\to\infty\big\},$$
$$
\Liminf\limits_{k \to \infty} C_k:=\big\{x\in\R^n\;\big|\;\exists\,x_k \in C_k\;\mbox{ with }\;x_k \to x\;\mbox{ as }\;k\to\infty\big\}.
$$
It is said that $\{C_k\}$ is {\em convergent} to some $C$ (denoted as $C_k\to C$) if 
$$
\Limsup\limits_{x \to \infty} C_k = \Liminf\limits_{x \to \infty} C_k=:\Lim\limits_{x \to \infty}C_k=C.
$$ 
When $F: \R^n \to \R^m$ is \textit{norm-coercive}, in the sense that $\|F(x)\| \to \infty$ as $\|x\| \to \infty$, the convergence $C_k \to C$ implies  that $F(C_k) \to F(C)$; see \cite[Theorem~4.26]{Rockafellar98}.\vspace*{0.03in}

Now we recall epigraphical limits for sequences of functions and graphical limits for sequences of multifunctions by following the book \cite{Rockafellar98}. For a sequence of functions $f_k : \R^n \to \overline{\R}$, the \textit{lower epigraphical limit} $\eliminf\limits_{k \to \infty} f_k$ and the \textit{upper epigraphical limit} $\elimsup\limits_{k \to \infty} f_k$ are defined via their epigraphs as 
$$
\epi\Big( \eliminf\limits_{k \to \infty} f_k\Big) = \Limsup\limits_{k \to \infty} (\epi f_k),\quad
\epi\Big( \elimsup\limits_{k \to \infty} f_k\Big) = \Liminf\limits_{k \to \infty} (\epi f_k).
$$
When these limits agree, we say that $(f_k)$ \textit{epigraphically converges} to $f$ and  denote this by either $\elim\limits_{k \to \infty} f_k = f$, or $f_k \xrightarrow{e} f$. If in addition $-f_k \xrightarrow{e} -f$, then $f_k$ \textit{continuously converges} to $f$, which is denoted by $f_k \xrightarrow{c} f$ and means that for all $x_k \to x$ and $x\in\R^n$, we have $f_k(x_k) \to f(x)$; see \cite[Theorem~7.11]{Rockafellar98}. An alternative statement to determine the lower and upper epigraphical limits together with a characterization of epi-convergence is given in the following result taken from \cite[Proposition~7.2]{Rockafellar98}.

\begin{Proposition}\label{prop:epichar} Let $\{f_k\}$ be a sequence of functions on $\R^n$, and let $x\in\R^n$. Then we have
$$
\Big(\eliminf\limits_{k \to \infty} f_k\Big)(x) = \min\big\{ \alpha \in \overline{\mathbb{\R}}\;\big|\;\exists\, x_k \to x,\;\liminf f_k(x_k) = \alpha\big\}, 
$$
$$
\Big(\elimsup\limits_{k \to \infty} f_k\Big)(x) = \min\big\{ \alpha \in \overline{\mathbb{\R}}\;\big|\;\exists\,x_k \to x,\;\limsup f_k(x_k) = \alpha\big\}. 
$$
Therefore, $\{f_k\}$ epigraphically converges to $f$ if and only if at each point $x$ it holds
$$
\left\{ \begin{aligned}
&\liminf f_k(x_k) \ge f(x) \text{ for every sequence } x_k \to x,\\
&\limsup f_k(x_k) \le f(x) \text{ for some sequence } x_k \to x.
\end{aligned} \right. $$
In particular, if $\{f_k\}$ epigraphically converges to $f$, then for all $x$ there exists a sequence $x_k \to x$ such that $f_k(x_k) \to f(x)$ as $k\to\infty$.
\end{Proposition}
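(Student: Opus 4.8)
The plan is to reduce both displayed formulas to the set-convergence definitions $\epi\big(\eliminf_k f_k\big)=\Limsup_k(\epi f_k)$ and $\epi\big(\elimsup_k f_k\big)=\Liminf_k(\epi f_k)$ together with the elementary descriptions of $\Limsup$ and $\Liminf$ of set sequences recalled above; the epi-convergence characterization and its final consequence then follow by combining the two formulas. Throughout, write $g:=\eliminf_k f_k$ and $h:=\elimsup_k f_k$.

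For the lower formula I would establish two inequalities: (a) $\liminf_k f_k(x_k)\ge g(x)$ for \emph{every} sequence $x_k\to x$; and (b) $\liminf_k f_k(x_k)\le g(x)$ for \emph{some} sequence $x_k\to x$. Together they say that $g(x)$ equals the minimum of $\liminf_k f_k(x_k)$ over all $x_k\to x$, which is the asserted identity. For (a), put $\beta:=\liminf_k f_k(x_k)$; if $\beta$ is finite, extract a subsequence with $f_{k_j}(x_{k_j})\to\beta$, note that $(x_{k_j},f_{k_j}(x_{k_j}))\in\epi f_{k_j}$ for large $j$ and converges to $(x,\beta)$, hence $(x,\beta)\in\Limsup_k\epi f_k=\epi g$ and $g(x)\le\beta$; the case $\beta=-\infty$ runs the same argument on the points $(x_k,-M)$ along the indices where $f_k(x_k)<-M$, and $\beta=+\infty$ is trivial. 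For (b), when $g(x)$ is finite the point $(x,g(x)+1/j)$ lies in $\Limsup_k\epi f_k$ for every $j$, so there are indices $k_j\uparrow\infty$ and points $z_j\to x$ with $f_{k_j}(z_j)<g(x)+2/j$; setting $x_k:=z_j$ for $k=k_j$ and $x_k:=x$ otherwise gives $x_k\to x$ with $\liminf_k f_k(x_k)\le\liminf_j f_{k_j}(z_j)\le g(x)$, and the cases $g(x)=\pm\infty$ are handled in the same spirit.

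The upper formula is proved symmetrically, now using $\Liminf$ in place of $\Limsup$, which is convenient since it supplies full sequences rather than subsequences. One shows (a$'$) $\limsup_k f_k(x_k)\ge h(x)$ for every $x_k\to x$ by pushing suitable points of $\epi f_k$ that converge to $(x,\gamma+\ve)$ --- resp.\ to $(x,-M)$ when $\gamma:=\limsup_k f_k(x_k)=-\infty$ --- into $\Liminf_k\epi f_k=\epi h$; and (b$'$) $\limsup_k f_k(x_k)\le h(x)$ for some $x_k\to x$ by choosing $(x_k,\alpha_k)\in\epi f_k$ with $(x_k,\alpha_k)\to(x,h(x))$ when $h(x)$ is finite. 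The one genuinely delicate point is $h(x)=-\infty$: for each $j$ pick $(x^j_k,\alpha^j_k)\in\epi f_k$ converging to $(x,-j)$ as $k\to\infty$, choose $N_1<N_2<\cdots$ with $\|x^j_k-x\|<1/j$ and $f_k(x^j_k)<-j+1$ for all $k\ge N_j$, and set $x_k:=x^j_k$ for $N_j\le k<N_{j+1}$; then $x_k\to x$ and $f_k(x_k)\to-\infty$, so $\limsup_k f_k(x_k)=-\infty=h(x)$.

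With the two formulas in hand, the epi-convergence characterization follows from $f_k\xrightarrow{e}f\iff g=h=f$. Since $\Liminf_k\epi f_k\subset\Limsup_k\epi f_k$ one always has $g\le h$, so $g=h=f$ amounts to $g\ge f$ together with $h\le f$; by the two formulas these translate precisely into ``$\liminf_k f_k(x_k)\ge f(x)$ for every sequence $x_k\to x$'' and ``$\limsup_k f_k(x_k)\le f(x)$ for some sequence $x_k\to x$'', which is the displayed system. Finally, feeding the second (witnessing) sequence into the first condition yields $f(x)\le\liminf_k f_k(x_k)\le\limsup_k f_k(x_k)\le f(x)$, hence $f_k(x_k)\to f(x)$ --- the concluding ``in particular'' assertion. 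I expect the only non-routine ingredients to be the attainment of the minima: converting a convergent \emph{subsequence} in $\Limsup_k\epi f_k$ into a genuine sequence $x_k\to x$, and the diagonal construction in the $h(x)=-\infty$ case above; the bookkeeping for infinite values (and for indices with $f_k\equiv+\infty$) is routine.
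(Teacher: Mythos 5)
The paper offers no proof of this statement at all—it is quoted directly from \cite[Proposition~7.2]{Rockafellar98}—so there is no internal argument to compare against. Your proof is correct and is essentially the standard one behind the cited result: you reduce both formulas to the defining identities $\epi(\eliminf_k f_k)=\Limsup_k(\epi f_k)$ and $\epi(\elimsup_k f_k)=\Liminf_k(\epi f_k)$ via the sequential descriptions of outer and inner set limits, with the padding construction (filling non-subsequence indices with $x$) and the diagonal argument correctly securing attainment of the minima and the $\pm\infty$ cases, and the epi-convergence characterization together with the final ``in particular'' claim then follows exactly as you say from $g\le h$ and the two formulas.
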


Epi-convergence is instrumental for the introduction of second-order subderivatives of extended-real-valued functions; see \cite[Definition~13.3]{Rockafellar98}. Consider first the {\em second-order difference quotients} of $f\colon\R^n\to\oR$ at $\ox\in\dom f$ for $\ov\in\R^n$ given as 
\begin{equation*}
\Delta_t^2 f(\bar{x}|\bar{v})(w) := \frac{f(\bar{x} + t w) - f(\bar{x}) - t \la \bar{v}, w \ra}{\frac{1}{2}t^2}\;\mbox{ whenever }\;w\in\R^n\;\mbox{ and }\;t>0
\end{equation*}
and then define and {\em second-order subderivative} of $f$ at $\bar{x}$ for $\bar{v}$ by
\begin{equation}\label{eq:d2}
d^2 f(\bar{x}|\bar{v})(w) := \Big(\eliminf\limits_{t \downarrow 0} \Delta_t^2 f(\bar{x}|\bar{v})\Big)(w) = \liminf\limits_{\substack{t \downarrow 0 \\ w' \to w}} \Delta_t^2 f(\bar{x}|\bar{v})(w'), \quad w\in \R^n.
\end{equation}
Note that $d^2 f(\bar{x}|\bar{v})$ may take the value $-\infty$. When the epigraphical lower limit in \eqref{eq:d2} is actually a full limit, we say that  of $f$ is \textit{twice epi-differentiable }at $\bar{x}$ for $\bar{v}$. If in addition $d^2 f(\bar{x}|\bar{v})$ is proper, then $f$ is \textit{properly twice epi-differentiable} at $\bar{x}$ for $\bar{v}$. A major subclass of nonsmooth twice epi-differentiable functions consists of {\em parabolically regular} ones; see \cite{mms,sarabi,Rockafellar98}. Observe that a similar formula to \eqref{eq:d2} for the upper epigraphical limit of $\Delta_t^2 f(\bar{x}|\bar{v})$ as $t \downarrow 0$ is not available, and we usually have to appeal to 
Proposition~\ref{prop:epichar} providing the expression
$$
\Big(\elimsup\limits_{t \downarrow 0} \Delta_t^2 f(\bar{x}|\bar{v})\Big)(w)  = \min\big\{ \alpha \in \overline{\mathbb{\R}}\;\big|\;\exists\, t_k \downarrow 0, w_k \to w,\;\limsup \Delta_{t_k}^2 f(\bar{x}|\bar{v})(w_k) = \alpha\big\}
$$
when calculating this part of twice epi-differentiability, see, e.g., the function in Example~\ref{rmk:C1gtd} below. In the case of classical twice differentiability, second-order subderivatives reduce to quadratic forms associated with the {\em Hessian matrices} at $\bar{x}$ (see \cite[Proposition~13.8]{Rockafellar98}):
$$
d^2 f(\bar{x}|\nabla f(\bar{x}))(w) = \la w, \nabla^2 f(\bar{x})w \ra, \quad w \in \R^n.
$$
To get $d^2f(\bar{x}|\bar{v})(w)\in\R$, it is necessary that $w$ belongs to the {\em critical cone} of $f$ at $\bar{x}$ for $\bar{v}$ defined by
\begin{equation}\label{eq:critcone}
K(\bar{x},\bar{v}) :=\big\{ w\in\R^n\;\big|\;df(\bar{x})(w) = \la \bar{v},w \ra\big\},
\end{equation}
where the {\em first-order subderivative} of $f$ at $\ox\in\dom f$ is given by
\begin{equation}\label{1sub}
df(\ox)(w):=\liminf\limits_{\substack{t \downarrow 0 \\ w' \to w}} \Delta_t f(\ox)(w')\;\mbox{ with }\;\Delta_t f(\ox)(w'):=\frac{f(\ox+t w')-f(\ox)}{t},\quad w\in\R^n.    
\end{equation}

Next we consider set-valued mappings $F_k: \R^n\tto\R^m$, $k\in\N$, and define for them the {\em graphical outer limit} and {\em graphical inner limit} via the set convergence
$$
\gph\Big(\glimsup\limits_{k \to \infty} F_k\Big) := \Limsup\limits_{k \to \infty} (\gph F_k),\quad
\gph\Big( \gliminf\limits_{k \to \infty} F_k\Big) := \Liminf\limits_{k \to \infty} (\gph F_k). 
$$
When these limits agree, we say that $\{F_k\}$ {\em graphically converges} to some $F$  denoted as $F = \glim\limits_{k \to \infty} F_k$ or $F_k \xrightarrow{g} F$. The usage of graphical convergence allows us represent the graphical derivative \eqref{eq:graphderivative} as 
\begin{equation}\label{eq:8(15)}
DF(\bar{x}|\bar{y}) = \glimsup\limits_{t \downarrow 0} \Delta_t F(\bar{x}|\bar{y}),\;\mbox{ where }\;\Delta_t F(\bar{x}|\bar{y})(w) := \frac{F(\bar{x} + tw) - \bar{y}}{t}.
\end{equation}
A multifunction $F$ is {\em proto-differentiable} at $\bar{x}$ for $\bar{y} \in F(\bar{x})$ if the limit in \eqref{eq:8(15)} is a full graphical limit.

\begin{Proposition}\label{rmk:protosingle_diff} If a single-valued mapping $F: \R^n \to \R^m$ is differentiable at $\bar{x} \in \R^n$, then $F$ is proto-differentiable at $\bar{x}$ for $\bar{y}= F(\bar{x})$.
\end{Proposition}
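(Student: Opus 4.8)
The plan is to identify the difference-quotient mappings $\Delta_t F(\bar x\,|\,\bar y)(w)=\big(F(\bar x+tw)-\bar y\big)/t=\big(F(\bar x+tw)-F(\bar x)\big)/t$ with the linear map $A:=\nabla F(\bar x)$ in the limit $t\downarrow 0$, first showing convergence that is uniform on bounded sets, and then upgrading this to graphical convergence. By \eqref{eq:8(15)} we have $DF(\bar x\,|\,\bar y)=\glimsup_{t\downarrow 0}\Delta_t F(\bar x\,|\,\bar y)$, so it suffices to check that the graphical outer limit equals the graphical inner limit; this will moreover yield the expected identification $DF(\bar x\,|\,\bar y)=A$.

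Step 1: locally uniform convergence. Set $r(u):=F(\bar x+u)-F(\bar x)-Au$, so differentiability of $F$ at $\bar x$ means $\|r(u)\|/\|u\|\to 0$ as $u\to 0$. For $t>0$ one has $\Delta_t F(\bar x\,|\,\bar y)(w)=Aw+r(tw)/t$, and for $w\ne 0$, $\|r(tw)/t\|=\big(\|r(tw)\|/\|tw\|\big)\,\|w\|$. Fix $M>0$; given $\ve>0$, pick $\delta>0$ with $\|r(u)\|<\ve\|u\|$ whenever $0<\|u\|\le\delta$. Then for $0<t<\delta/M$ and every $w$ with $\|w\|\le M$ (the case $w=0$ being trivial) we get $\|\Delta_t F(\bar x\,|\,\bar y)(w)-Aw\|\le\ve M$. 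Hence $\Delta_t F(\bar x\,|\,\bar y)\to A$ uniformly on $\{w:\|w\|\le M\}$ as $t\downarrow 0$; in particular, since $A$ is continuous, $\Delta_{t_k}F(\bar x\,|\,\bar y)(w_k)\to Aw$ whenever $t_k\downarrow 0$ and $w_k\to w$ (the sequence $\{w_k\}$ being bounded).

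Step 2: from pointwise to graphical convergence. Recalling that the outer and inner graphical limits along $t\downarrow 0$ reduce to sequential limits over all sequences $t_k\downarrow 0$, it is enough to prove, for every such sequence, that $\gph A\subseteq\Liminf_k\gph\Delta_{t_k}F(\bar x\,|\,\bar y)$ and $\Limsup_k\gph\Delta_{t_k}F(\bar x\,|\,\bar y)\subseteq\gph A$. For the first inclusion, take $(w,Aw)\in\gph A$ and $t_k\downarrow 0$; choosing $w_k:=w$ and $z_k:=\Delta_{t_k}F(\bar x\,|\,\bar y)(w)$, Step 1 gives $z_k\to Aw$, so $(w_k,z_k)\to(w,Aw)$ with $(w_k,z_k)\in\gph\Delta_{t_k}F(\bar x\,|\,\bar y)$. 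For the second, if $(w,z)\in\Limsup_k\gph\Delta_{t_k}F(\bar x\,|\,\bar y)$, then along a subsequence $(w_{k_j},z_{k_j})\to(w,z)$ with $z_{k_j}=\Delta_{t_{k_j}}F(\bar x\,|\,\bar y)(w_{k_j})$, and Step 1 forces $z_{k_j}\to Aw$, hence $z=Aw$. Combining these with the elementary inclusion $\gliminf_{t\downarrow 0}\Delta_t F(\bar x\,|\,\bar y)\subseteq\glimsup_{t\downarrow 0}\Delta_t F(\bar x\,|\,\bar y)$ shows that both limits coincide with $A$, so $DF(\bar x\,|\,\bar y)=A$ is a full graphical limit and $F$ is proto-differentiable at $\bar x$ for $\bar y=F(\bar x)$.

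There is no substantive obstacle in this argument; the one point requiring care is that the inner graphical limit $\gliminf_{t\downarrow 0}$ is tested against \emph{every} sequence $t_k\downarrow 0$, which is why in Step 2 one uses the constant choice $w_k\equiv w$ — this is legitimate precisely because of the pointwise (indeed locally uniform) convergence obtained in Step 1. One could alternatively quote the standard fact that continuous convergence of single-valued mappings implies graphical convergence to the same limit, but the two-inclusion argument above keeps the proof self-contained.
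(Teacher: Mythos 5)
Your proof is correct, and its core coincides with the paper's: both arguments hinge on the fact that differentiability makes the difference quotients $\Delta_{t}F(\bar x|\bar y)(w)$ converge to $\nabla F(\bar x)w$, and both use the constant choice $w_k\equiv w$ to place $\gph\nabla F(\bar x)$ inside the graphical inner limit. The difference is in how the outer limit is handled: the paper simply quotes \cite[Proposition~8.34]{Rockafellar98} to identify $DF(\bar x|\bar y)=\nabla F(\bar x)$ (so that the outer limit is already known to be the linear map) and then only verifies the inner-limit half, whereas you prove both inclusions from scratch, upgrading the little-$o$ estimate to convergence that is uniform on bounded sets so that $\Delta_{t_{k}}F(\bar x|\bar y)(w_{k})\to\nabla F(\bar x)w$ along arbitrary $w_k\to w$, which is exactly what the inclusion $\Limsup_k\gph\Delta_{t_k}F(\bar x|\bar y)\subset\gph\nabla F(\bar x)$ requires. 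Your route is therefore self-contained (it in effect reproves the cited proposition) at the cost of a slightly longer argument, while the paper's is shorter but leans on the external identification of the graphical derivative; you also correctly note the point the paper leaves implicit, namely that the inner graphical limit must be tested against every sequence $t_k\downarrow 0$.
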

\begin{proof} Assume that $F: \R^n \to \R^m$ is differentiable at $\bar{x} \in \R^n$. Then we get by \cite[Proposition~8.34]{Rockafellar98} that $DF(\bar{x}|\bar{y}) = \nabla F(\bar{x})$.  This gives us 
$\bar{z} = \nabla F(\bar{x})\bar{w}$ for any $\bar{w} \in \R^n$ and $\bar{z} \in DF(\bar{x}|\bar{y})(\bar{w})$. By the differentiability of $F$ at $\bar{x}$, for any $t_k \downarrow 0$ we have
$$
\lim\limits_{k \to \infty}\frac{F(\bar{x} + t_k \bar{w}) - F(\bar{x})}{t_k} = \nabla F(\bar{x})\bar{w}.
$$
Letting $z_k := \dfrac{F(\bar{x} + t_k \bar{w}) - F(\bar{x})}{t_k}$ and $w_k := \bar{w}$ verifies the proto-differentiability of $F$ at $\bar{x}$ for $\bar{y}$.
\end{proof}

Finally in this section, we recall the notions of {\em Moreau $\lambda$-envelopes} and {\em $\lambda$-proximal mappings} associated with proper l.s.c.\ functions $f$ with a parameter $\lambda > 0$ defined by
$$
e_\lambda f(x) := \inf\limits_{u \in \R^n}\Big\{ f(u) + \frac{1}{2\lambda}\|u - x\|^2\Big\},\quad P_\lambda f(x) := \argmin\limits_{u \in \R^n} \big\{ f(u) + \frac{1}{2\lambda}\|u - x\|^2\}.
$$
The function $f$ is \textit{prox-bounded} if there exists $\lambda > 0$ such that $e_{\lambda} f(x) > -\infty$ for some $x \in \R^n$, which is equivalent to saying that $f$ is minorized by a quadratic function.

\section{Variational $s$-Convexity and Second-Order Subderivatives}\label{sec:varsconv}

In the recent paper \cite{varprox}, Rockafellar unifies the {\em variational convexity} and {\em prox-regularity} of extended-real-functions on finite-dimensional spaces into a single concept called {\em variational $s$-convexity}. Recall first that  $f: \R^n \to \overline{\R}$ is {\em $s$-convex} on $\R^n$ for a given number $s \in \R$ if its quadratic shift $f - \frac{s}{2}\|\cdot\|^2$ is convex on the entire space. This global property is called {\em $s$-strong convexity} when $s > 0$ and {\em $(-s)$-weak convexity} when $s < 0$. 
Now we formulate the major notion.

\begin{Definition}\label{defi:varconv} {\rm Let $s$ be any real number. A proper l.s.c.\ function $f\colon\R^n \to \overline{\R}$ is {\em variationally $s$-convex} at $\bar{x}$ for $\bar{v} \in \partial f(\bar{x})$ if $f$ is finite at $\bar{x}$ and there exist an $s$-convex function $\widehat{f}$ as well as convex neighborhoods $U$ of $\bar{x}$ and  $V$ of $\bar{v}$ together with $\varepsilon > 0$ such that $\widehat{f} \le f$ on $U$ and 
$$
(U \times V) \cap \gph \partial \widehat{f} = (U_\varepsilon \times V) \cap \gph \partial f, \quad\widehat{f}(x) = f(x)\;\text{ at common elements }\;(x,v), 
$$
where $U_\varepsilon := \{x \in U \mid f(x) < f(\bar{x}) + \varepsilon\}$. We can always select $U := B(\bar{x},\varepsilon)$, $V := B(\bar{v},\varepsilon)$ and then say that $f$ is variationally $s$-convex at $\bar{x}$ for $\bar{v}$ \textit{with corresponding radius} $\varepsilon > 0$.} When $s \le 0$, $f$ is said to be $(-s)$-level {\em prox-regular} at $\bar{x}$ for $\bar{v}$, while for $s = 0$ we say that $f$ is {\em variationally convex}  at $\bar{x}$ for $\bar{v}$. When $s > 0$, $f$ is called to be {\em $s$-strongly variationally convex} at $\bar{x}$ for $\bar{v}$.
\end{Definition}

Prox-regular functions have been around for a long time; see Section~\ref{intro} and also the books \cite{Mordukhovich24,thibault} in infinite dimensions. On the other hand, the notion of variational convexity appears rather recently in \cite{varconv} in finite-dimensional spaces while being further studied, characterized, and applied in this framework in \cite{var2order,kmp24,varsuff1,Mordukhovich24,roc}. Infinite-dimensional developments are presented in \cite{varconvinf}. The novel unified version from Definition~\ref{defi:varconv} is utilized in \cite{gfrerer24,varprox,roc24}.\vspace*{0.03in}

Let us now discuss the notion of $f$-attentive $\ve$-localizations of $\partial f$ around a point. Given an l.s.c.\ function $f: \R^n \to \overline{\R}$, its subdifferential \eqref{eq:subdiff}, and some number  $\varepsilon > 0$, we say that a set-valued mapping $T_\ve: \R^n \rri \R^n$ is an {\em $f$-attentive $\varepsilon$-localization} of $\partial f$ around $(\bar{x},\bar{v}) \in \gph \partial f$ if
\begin{equation}\label{eq:fattentive}
\gph T_\ve = \big\{ (x,v) \in \gph \partial f\;\big|\;x \in B(\bar{x},\varepsilon),\;v \in B(\bar{v},\varepsilon),\;f(x) < f(\bar{x}) + \varepsilon\big\}.
\end{equation}

By Definition~\ref{defi:varconv}, an $r$-level prox-regular function $f$ at $\bar{x}$ for $\bar{v}$ agrees with an $r$-weakly convex function $\widehat{f}$ on an $f$-attentive localization of $\partial f$ around $(\bar{x},\bar{v})$, while an ($s$-strongly) variational convex function coincides with an ($s$-strongly) convex function in the similar manner. In light of this interpretation, we get the following characterization of variationally $s$-convex functions $f$ taken from \cite[Theorem~1]{varprox}.

\begin{Theorem}\label{theo:svarchar} Given level $s \in \R$ and an extended-real-valued function $f: \R^n \to \overline{\R}$, suppose that $f$ is finite and l.s.c.\ at $\bar{x}$. Then $f$ is variationally $s$-convex at $\bar{x}$ for $\bar{v} \in \partial f(\bar{x})$ if and only if there exist a number $\ve>0$, a neighborhood $U$ of $\bar{x}$, and an $f$-attentive $\ve$-localization of $\partial f$ around $(\bar{x},\bar{v})$ defined in \eqref{eq:fattentive} such that for all $x' \in U$ we have
\begin{equation}\label{eq:svarconv}
f(x') \ge f(x) + \la v, x' - x \ra + \frac{s}{2}\|x' - x\|^2\;\mbox{whenever }\;(x,v) \in \gph T_\ve.
\end{equation}
\end{Theorem}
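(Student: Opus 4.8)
The plan is to prove the two implications of this equivalence separately; the implication ``variational $s$-convexity $\Rightarrow$ \eqref{eq:svarconv}'' is routine, and the converse is the substantial one. For the first implication, let $\widehat f$, the convex neighborhoods $U\ni\bar x$, $V\ni\bar v$, and the radius $\delta>0$ be as in Definition~\ref{defi:varconv}, so $\widehat f$ is $s$-convex on $\R^n$, $\widehat f\le f$ on $U$, and $(U\times V)\cap\gph\partial\widehat f=(U_\delta\times V)\cap\gph\partial f$ with $\widehat f=f$ at common points. After shrinking $\varepsilon\in(0,\delta]$ so that $B(\bar x,\varepsilon)\subseteq U$ and $B(\bar v,\varepsilon)\subseteq V$, keep $U$ as the neighborhood in the conclusion and let $T_\varepsilon$ be the $f$-attentive $\varepsilon$-localization \eqref{eq:fattentive}. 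If $(x,v)\in\gph T_\varepsilon$, then $x\in U_\delta$ and $v\in V$, so $(x,v)\in(U\times V)\cap\gph\partial\widehat f$, giving $v\in\partial\widehat f(x)$ and $\widehat f(x)=f(x)$; since $v-sx\in\partial(\widehat f-\tfrac s2\|\cdot\|^2)(x)$ and $\widehat f-\tfrac s2\|\cdot\|^2$ is convex, rearranging the convex subgradient inequality yields $\widehat f(x')\ge\widehat f(x)+\langle v,x'-x\rangle+\tfrac s2\|x'-x\|^2$ for all $x'\in\R^n$, and on $U$ the bound $f(x')\ge\widehat f(x')$ together with $\widehat f(x)=f(x)$ gives \eqref{eq:svarconv}.

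For the converse, I would first build an $s$-convex minorant. With $U$ and the $f$-attentive $\varepsilon$-localization $T_\varepsilon$ satisfying \eqref{eq:svarconv} (shrinking $\varepsilon$ so that $B(\bar x,\varepsilon)\subseteq U$), set
\[
\widehat f(x'):=\sup\bigl\{\,f(x)+\langle v,x'-x\rangle+\tfrac s2\|x'-x\|^2 \;\big|\; (x,v)\in\gph T_\varepsilon\bigr\}.
\]
Then $\widehat f-\tfrac s2\|\cdot\|^2$ is a pointwise supremum of affine functions, so $\widehat f$ is $s$-convex; by \eqref{eq:svarconv}, $\widehat f\le f$ on $U$; taking $(x,v)=(\bar x,\bar v)$ gives $\widehat f(\bar x)\ge f(\bar x)$, hence $\widehat f(\bar x)=f(\bar x)$, and a mean-value estimate on the affine-plus-quadratic pieces (uniform over $(x,v)\in\gph T_\varepsilon$, since $\|v+s(\cdot-x)\|$ is bounded there and each piece is $\le\widehat f(\bar x)$ at $\bar x$) shows $\widehat f$ is finite, hence continuous, near $\bar x$, so $\widehat f$ is proper. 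For each $(x,v)\in\gph T_\varepsilon$ the piece indexed by $(x,v)$ lies below $\widehat f$ with equality at $x'=x$, which forces $\widehat f(x)=f(x)$ and, being a smooth minorant touching $\widehat f$ at $x$, forces $v\in\partial\widehat f(x)$. Choosing $B(\bar x,\varepsilon)$, $B(\bar v,\varepsilon)$ and radius $\varepsilon$ in Definition~\ref{defi:varconv}, we thus get $(U_\varepsilon\times V)\cap\gph\partial f=\gph T_\varepsilon\subseteq(U\times V)\cap\gph\partial\widehat f$ with $\widehat f=f$ on this common set. The remaining, harder task is the reverse localization inclusion: for suitably shrunk $U'\ni\bar x$, $V'\ni\bar v$, every $(x,v)\in\gph\partial\widehat f$ with $x\in U'$, $v\in V'$ must satisfy $v\in\partial f(x)$ and $f(x)<f(\bar x)+\varepsilon$. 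Granting $\widehat f(x)=f(x)$ for such pairs, the chain $f(x')\ge\widehat f(x')\ge\widehat f(x)+\langle v,x'-x\rangle+\tfrac s2\|x'-x\|^2=f(x)+\langle v,x'-x\rangle+\tfrac s2\|x'-x\|^2$ for $x'\in U$ makes $\liminf_{x'\to x}(f(x')-f(x)-\langle v,x'-x\rangle)/\|x'-x\|\ge 0$, so $v\in\partial f(x)$ by \eqref{eq:subdiff}, while $f(x)=\widehat f(x)<f(\bar x)+\varepsilon$ for $x$ near $\bar x$ by continuity of $\widehat f$. So everything reduces to the equality $\widehat f=f$ along the part of $\gph\partial\widehat f$ near $(\bar x,\bar v)$.

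Because that equality hides a circularity (a prox point of $\widehat f$ landing in the right set is again what one is trying to prove), the route I would actually follow is through Moreau envelopes. Fix $\lambda>0$ small, in particular $\lambda(-s)<1$ when $s<0$. For $z$ near $z_0:=\bar x+\lambda\bar v$, pick a proximal point $u\in P_\lambda f(z)$; the optimality condition gives $\tfrac{z-u}{\lambda}\in\partial f(u)$, and an a priori estimate using \eqref{eq:svarconv} at $(\bar x,\bar v)$, the bound $e_\lambda f(z)\le f(\bar x)+\tfrac1{2\lambda}\|\bar x-z\|^2$, and $1+\lambda s>0$ shows that $u$ is near $\bar x$, that $\tfrac{z-u}{\lambda}$ is near $\bar v$, and that $f(u)<f(\bar x)+\varepsilon$, so $(u,\tfrac{z-u}{\lambda})\in\gph T_\varepsilon$. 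Feeding \eqref{eq:svarconv} with this pair into $e_\lambda f(z')=\inf_{x'}\{f(x')+\tfrac1{2\lambda}\|x'-z'\|^2\}$ and minimizing the resulting quadratic in $x'$ produces a quadratic minorant of $e_\lambda f$ with Hessian $\tfrac{s}{1+\lambda s}I$ that is exact at $z'=z$; as $z$ varies over a neighborhood of $z_0$, this exhibits $e_\lambda f$ as $\tfrac{s}{1+\lambda s}$-convex (and finite, hence continuous) there. I would then invoke the Moreau-envelope characterization of variational $s$-convexity (established for $s=0$ in \cite{varconv} and in this level-$s$ form in \cite{varprox}, and re-derivable along the classical lines of \cite{varconv}) to conclude that $f$ is variationally $s$-convex at $\bar x$ for $\bar v$.

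I expect the main obstacle, common to both routes, to be the \emph{localization of the infimum} defining $e_\lambda f$ (equivalently, of the supremum defining $\widehat f$): one must ensure that the relevant minimizers stay inside $U$, indeed close to $\bar x$, uniformly for $z$ near $z_0$. This is precisely where prox-boundedness of $f$ enters --- if $f$ is not globally prox-bounded, one first replaces it outside a small neighborhood of $\bar x$ by a large convex quadratic, which alters neither the local subdifferential data nor the validity of \eqref{eq:svarconv} --- and it requires calibrating $\lambda$, $\varepsilon$, and the radii of $U$, $V$ against one another and against the weak-convexity modulus of $\widehat f$. Once this localization is secured, the quadratic-minorant computation together with the envelope characterization closes the argument, and the constructive route above is subsumed as a by-product.
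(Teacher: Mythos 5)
First, a point of comparison: the paper does not prove this statement at all --- it is imported verbatim as \cite[Theorem~1]{varprox} --- so there is no internal argument to measure you against, and any complete proof you give would necessarily go beyond what the paper does. Within your proposal, the forward implication is correct and complete: passing from $\gph T_\ve$ into the common graph of $\partial f$ and $\partial\widehat f$, using the convexity of $\widehat f-\tfrac s2\|\cdot\|^2$ to get the shifted subgradient inequality, and then majorizing $\widehat f$ by $f$ on $U$ is exactly the routine argument, and your bookkeeping with the two radii is fine.

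The converse, however, is not a proof but a program with the decisive step missing. You candidly abandon the direct construction of $\widehat f$ as a supremum of quadratic minorants because of the reverse localization inclusion (showing that every $(x,v)\in\gph\partial\widehat f$ near $(\bar x,\bar v)$ actually lies in $\gph\partial f$ with $\widehat f(x)=f(x)$), and that is indeed where the real content of the theorem sits. But your replacement route does not close it either: after the (plausible, though only sketched) a priori estimates forcing the prox points of a suitably truncated $f$ to satisfy $(u,(z-u)/\lambda)\in\gph T_\ve$ and yielding $\tfrac{s}{1+\lambda s}$-convexity of $e_\lambda$ of the truncation near $\bar x+\lambda\bar v$, you conclude by invoking ``the Moreau-envelope characterization of variational $s$-convexity'' in its level-$s$ form from \cite{varprox}. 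That is the same paper, and essentially the same depth of result, as the Theorem~1 you are trying to prove; unless you verify that the envelope characterization is established there independently of Theorem~1 (rather than deduced from it), the argument is circular, and in any case the implication ``local weak convexity of the envelope of the localized function $\Rightarrow$ existence of an $s$-convex $\widehat f$ agreeing with $f$ on an $f$-attentive localization'' is precisely the hard step you set aside in the first route, now outsourced rather than proved. A self-contained proof must either carry out that envelope-to-$\widehat f$ construction (e.g., along the lines of \cite{varconv} or the prox-regularity arguments of \cite{proxreg}, with the calibration of $\lambda$, $\ve$, and the truncation radius done explicitly), or else complete the direct construction of $\widehat f$ together with the reverse inclusion; as written, neither is done, so the sufficiency half remains open.
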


It is said \cite[Definition~13.28]{Rockafellar98} that $f$ is {\em subdifferentially continuous} at $\bar{x}$ for $\bar{v}$ if for all $(x_k,v_k) \xrightarrow{\gph \partial f}(\bar{x},\bar{v})$ we have $f(x_k) \to f(\bar{x})$. The subdifferential continuity of $f$ at $\bar{x}$ for $\bar{v}$ allows us to replace $\gph T_\ve$ by an ordinary neighborhood of $(\bar{x},\bar{v})$ on $\gph \partial f$, i.e., the variationally $s$-convexity of $f$ at $\bar{x}$ for $\bar{v}$ holds if there exists $\varepsilon > 0$ such that for all $(x,v) \in \gph \partial f \cap B((\bar{x},\bar{v}),\varepsilon)$ we have
$$
f(x') \ge f(x) + \la v, x' - x \ra + \frac{s}{2}\|x' - x\|^2\;\mbox{ whenever }\;x'\in B(\bar{x},\varepsilon).
$$
In the next new result, important in what follows, we use the notation
$$
\gph T_{\varepsilon}(x,v): =\big\{ (\tilde{x},\tilde{v}) \in \gph \partial f\;\big|\;\tilde{x} \in B(x,\varepsilon),\;\tilde{v} \in B(v,\varepsilon),\;f(\tilde{x}) < f(x) + \varepsilon\big\}.
$$

\begin{Proposition}\label{prop:proxlan} Let $f: \R^n \to \overline{\R}$ be a proper l.s.c.\ function, and let $0 < \varepsilon_2 < \varepsilon_1$. Then:
\begin{itemize} 
\item[\rm \textbf{(i)}]  For all $(x,v) \in \gph T_{\varepsilon_2}(\bar{x},\bar{v})$, we have $\gph T_{\varepsilon_1 - \varepsilon_2}(x,v) \subset \gph T_{\varepsilon_1}(\bar{x},\bar{v})$.

\item[\rm \textbf{(ii)}] Fix any $s \in \R$ and assume that $f$ is variationally $s$-convex at $\bar{x}$ for $\bar{v}$. Let $\varepsilon_1$ be the corresponding radius, i.e., \eqref{eq:svarconv} holds for all $(x,v) \in \gph T_{\varepsilon_1}(\bar{x},\bar{v})$ and $x' \in B(\bar{x},\varepsilon_1)$. Then whenever $(x,v) \in \gph T_{\varepsilon_2}(\bar{x},\bar{v})$, $f$ is variationally $s$-convex at $x$ for $v$ with the radius $\varepsilon_1 - \varepsilon_2 > 0$.
\end{itemize}
\end{Proposition}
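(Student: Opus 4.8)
The plan is to establish (i) by elementary triangle-inequality bookkeeping on the three defining conditions of the $f$-attentive localizations, and then to derive (ii) immediately from (i) together with the characterization of variational $s$-convexity in Theorem~\ref{theo:svarchar}.

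\textbf{Part (i).} Fix $(x,v)\in\gph T_{\varepsilon_2}(\bar x,\bar v)$, which by definition means $(x,v)\in\gph\partial f$ with $\|x-\bar x\|<\varepsilon_2$, $\|v-\bar v\|<\varepsilon_2$, and $f(x)<f(\bar x)+\varepsilon_2$. Take an arbitrary $(\tilde x,\tilde v)\in\gph T_{\varepsilon_1-\varepsilon_2}(x,v)$. Then $(\tilde x,\tilde v)\in\gph\partial f$ automatically, and the triangle inequality gives $\|\tilde x-\bar x\|\le\|\tilde x-x\|+\|x-\bar x\|<(\varepsilon_1-\varepsilon_2)+\varepsilon_2=\varepsilon_1$ and likewise $\|\tilde v-\bar v\|<\varepsilon_1$; moreover $f(\tilde x)<f(x)+(\varepsilon_1-\varepsilon_2)<f(\bar x)+\varepsilon_1$. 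Hence $(\tilde x,\tilde v)\in\gph T_{\varepsilon_1}(\bar x,\bar v)$, which is the asserted inclusion. The only point to watch is that all three defining inequalities are strict, so the exact identity $(\varepsilon_1-\varepsilon_2)+\varepsilon_2=\varepsilon_1$ suffices.

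\textbf{Part (ii).} By Theorem~\ref{theo:svarchar} (in the form giving the ``corresponding radius''), to show that $f$ is variationally $s$-convex at $x$ for $v$ with radius $\varepsilon_1-\varepsilon_2$ it suffices to check that for every $x'\in B(x,\varepsilon_1-\varepsilon_2)$ and every $(\tilde x,\tilde v)\in\gph T_{\varepsilon_1-\varepsilon_2}(x,v)$ one has $f(x')\ge f(\tilde x)+\langle\tilde v,x'-\tilde x\rangle+\frac{s}{2}\|x'-\tilde x\|^2$. First note that $(x,v)\in\gph T_{\varepsilon_2}(\bar x,\bar v)\subset\gph\partial f$, so $v\in\partial f(x)$ and $f(x)$ is finite, while lower semicontinuity of $f$ at $x$ is part of the standing hypothesis; thus the setting of Theorem~\ref{theo:svarchar} applies at $(x,v)$. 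Now by part (i) we have $(\tilde x,\tilde v)\in\gph T_{\varepsilon_1-\varepsilon_2}(x,v)\subset\gph T_{\varepsilon_1}(\bar x,\bar v)$, and $\|x'-\bar x\|\le\|x'-x\|+\|x-\bar x\|<(\varepsilon_1-\varepsilon_2)+\varepsilon_2=\varepsilon_1$, so $x'\in B(\bar x,\varepsilon_1)$. Applying the assumed inequality \eqref{eq:svarconv} for the pair $(\tilde x,\tilde v)\in\gph T_{\varepsilon_1}(\bar x,\bar v)$ and the point $x'\in B(\bar x,\varepsilon_1)$ yields precisely the required estimate, with the chosen neighborhood $U:=B(x,\varepsilon_1-\varepsilon_2)$ and $f$-attentive $(\varepsilon_1-\varepsilon_2)$-localization $T_{\varepsilon_1-\varepsilon_2}(x,v)$ being exactly the objects demanded by the characterization.

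There is no genuine obstacle here: the whole argument is a careful accounting of radii, exploiting that $T_{\varepsilon}(\cdot)$ is assembled from ball/sublevel conditions whose radii add under concatenation, so that ``moving the base point'' from $\bar x$ to $x$ costs exactly $\varepsilon_2$. The one place that requires attention is the bookkeeping of the shifted reference data in (ii) — the base point becomes $x$ and $f(\bar x)$ is replaced by $f(x)$ — and confirming that the selected $U$ and $f$-attentive localization match the normalization $U=B(x,\varepsilon_1-\varepsilon_2)$ used to define ``variational $s$-convexity with corresponding radius $\varepsilon_1-\varepsilon_2$''.
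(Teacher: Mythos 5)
Your proof is correct and follows essentially the same route as the paper's: part (i) by the triangle inequality on the two ball conditions together with the additive bound on function values, and part (ii) by combining (i) with the inclusion $B(x,\varepsilon_1-\varepsilon_2)\subset B(\bar{x},\varepsilon_1)$ and applying the assumed inequality \eqref{eq:svarconv} at the pairs $(\tilde{x},\tilde{v})\in\gph T_{\varepsilon_1}(\bar{x},\bar{v})$, which is exactly the characterization of Theorem~\ref{theo:svarchar}. No gaps.
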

\begin{proof} \textbf{(i)} Fix $(x,v) \in \gph T_{\varepsilon_2}(\bar{x},\bar{v})$ and get for any $(\tilde{x},\tilde{v}) \in \gph T_{\varepsilon_1 - \varepsilon_2}(x,v)$ that $\tilde{x} \in B(x,\varepsilon_1 - \varepsilon_2)$, $\tilde{v} \in B(x,\varepsilon_1 - \varepsilon_2)$, which obviously yields $\tilde{x} \in B(\bar{x},\varepsilon_1)$ and $\tilde{v} \in B(\bar{v},\varepsilon_1)$. Moreover, we have
$$
f(\tilde{x}) < f(x) + \varepsilon_1 - \varepsilon_2 < [f(\bar{x}) + \varepsilon_2] + \varepsilon_1 - \varepsilon_2 = f(\bar{x}) + \varepsilon_1,
$$
and hence $\gph T_{\varepsilon_1 - \varepsilon_2}(x,v) \subset \gph T_{\varepsilon_1}(\bar{x},\bar{v})$. \medskip

\textbf{(ii)} Assume that $f: \R^n \to \overline{\R}$ is variationally $s$-convex at $\bar{x}$ for $\bar{v}$ with corresponding radius $\varepsilon_1$. Pick $(x,v) \in \gph T_{\varepsilon_1}(\bar{x},\bar{v})$ and get by \textbf{(i)} that $\gph T_{\varepsilon_1 - \varepsilon_2}(x,v) \subset \gph T_{\varepsilon_1}(\bar{x},\bar{v})$. Take then any $(\tilde{x},\tilde{v}) \in \gph T_{\varepsilon_1 - \varepsilon_2}(x,v) \subset \gph T_{\varepsilon_1}(\bar{x},\bar{v})$ and deduce from the variationally $s$-convexity of $f$ at $\bar{x}$ for $\bar{v}$ with radius $\varepsilon_1 > 0$ the fulfillment of the inequality
$$
f(x') \ge f(\tilde{x}) + \la \tilde{v}, x' - \tilde{x} \ra + \frac{s}{2}\|x' - \tilde{x}\|^2\;\mbox{ whenever }\;x'\in B(\bar{x},\varepsilon_1)
$$
Since we obviously have $B(x,\varepsilon_1 - \varepsilon_2) \subset B(\bar{x},\varepsilon_1)$, it follows that
$$
f(x') \ge f(\tilde{x}) + \la \tilde{v}, x' - \tilde{x} \ra + \frac{s}{2}\|x' - \tilde{x}\|^2\l\mbox{ for all }\;x'\in B(x,\varepsilon_1 - \varepsilon_2).
$$
Remembering that $(\tilde{x},\tilde{v})$ was taken arbitrarily in $\gph T_{\varepsilon_1 - \varepsilon_2}(x,v)$,
we arrive at the variational $s$-convexity of $f$ at $x$ for $v$ with the corresponding radius $\varepsilon_1 - \varepsilon_2$.
\end{proof}

The remaining part of this section addresses relationships between quadratic growth and second-order subderivatives with applications to variational $s$-convexity of extended-real-valued functions.

\begin{Theorem}\label{prop:strongchar} Let $f: \R^n \to \overline{\R}$ be an extended-real-valued l.s.c.\ function with $\bar{x} \in \dom f$, and let $\bar{v} \in \R^n$. Consider the following assertions:
 \begin{itemize}
\item[\rm\textbf{(i)}] There exists a neighborhood $U$ of $\bar{x}$ on which the second-order growth condition is satisfied
\begin{equation}\label{uni2}
f(x) \ge f(\bar{x}) + \la \bar{v}, x - \bar{x} \ra + \frac{\kappa}{2}\|x - \bar{x}\|^2,\quad x\in U.
\end{equation}

\item[\rm\textbf{(ii)}] We have the second-order subderivative estimate
\begin{equation}\label{tiltd22}
d^2 f(\bar{x}|\bar{v})(w) \ge \mu\|w\|^2\;\mbox{ for all }\;w\in\R^n.
\end{equation}
\end{itemize} 
Then implication {\rm \textbf{(i)} $\Longrightarrow$ \textbf{(ii)}} holds with $\mu = \kappa$, while the reverse one {\rm \textbf{(ii)} $\Longrightarrow$ \textbf{(i)}} holds with $\mu > \kappa$.
\end{Theorem}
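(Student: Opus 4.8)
The plan is to prove the two implications by different routes. For \textbf{(i)}~$\Rightarrow$~\textbf{(ii)} I would argue directly from the definition \eqref{eq:d2} of the second-order subderivative: assuming \eqref{uni2} holds on a neighborhood $U$ of $\bar{x}$, for every $w'$ close to $w$ and every small $t>0$ the point $\bar{x}+tw'$ lies in $U$, so \eqref{uni2} applied there reads, after dividing by $\tfrac{1}{2}t^2$, as $\Delta_t^2 f(\bar{x}|\bar{v})(w')\ge\kappa\|w'\|^2$. Passing to the lower epigraphical limit as $t\downarrow 0$, $w'\to w$ and using $\|w'\|\to\|w\|$ then yields $d^2 f(\bar{x}|\bar{v})(w)\ge\kappa\|w\|^2$, i.e.\ \eqref{tiltd22} with $\mu=\kappa$. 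No subtlety is involved here.

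For \textbf{(ii)}~$\Rightarrow$~\textbf{(i)} with $\mu>\kappa$ I would argue by contradiction using a blow-up (rescaling) of a sequence violating the quadratic growth. Fixing $\kappa<\mu$ and supposing that \eqref{uni2} fails on every neighborhood of $\bar{x}$, we obtain $x_k\to\bar{x}$ with $x_k\ne\bar{x}$ (the inequality \eqref{uni2} holds trivially at $x=\bar{x}$) and
$$f(x_k)<f(\bar{x})+\la\bar{v},x_k-\bar{x}\ra+\frac{\kappa}{2}\|x_k-\bar{x}\|^2.$$
The key step is the correct normalization: set $t_k:=\|x_k-\bar{x}\|\downarrow 0$ and $w_k:=(x_k-\bar{x})/t_k$, so that $\|w_k\|=1$ and $\bar{x}+t_kw_k=x_k$. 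Dividing the displayed inequality by $\tfrac{1}{2}t_k^2=\tfrac{1}{2}\|x_k-\bar{x}\|^2$ gives $\Delta_{t_k}^2 f(\bar{x}|\bar{v})(w_k)<\kappa$ for all $k$. Extracting a subsequence with $w_k\to w$, $\|w\|=1$ (compactness of the unit sphere) and using the $\liminf$-over-$(t\downarrow 0,\,w'\to w)$ form of \eqref{eq:d2}, which for the particular sequence $(t_k,w_k)$ yields $d^2 f(\bar{x}|\bar{v})(w)\le\liminf_k\Delta_{t_k}^2 f(\bar{x}|\bar{v})(w_k)\le\kappa$, we arrive at $d^2 f(\bar{x}|\bar{v})(w)\le\kappa<\mu=\mu\|w\|^2$, contradicting \eqref{tiltd22}. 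Hence \eqref{uni2} holds with this $\kappa$ on a suitable ball around $\bar{x}$.

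The proof is largely routine; the only genuine point of care lies in the second implication, where two things must be lined up: the blow-up direction has to be normalized by $t_k=\|x_k-\bar{x}\|$ so that $\{w_k\}$ stays on the unit sphere and a convergent subsequence exists, and one must exploit that $d^2 f(\bar{x}|\bar{v})$ is defined as a \emph{lower} epigraphical limit, hence can be bounded from above along a single admissible sequence. This last feature is exactly what forces the strict loss $\mu>\kappa$: the quadratic growth constant recovered from \eqref{tiltd22} is any number strictly below $\mu$, and equality $\mu=\kappa$ cannot be expected in this direction.
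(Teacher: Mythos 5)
Your proof is correct. The first implication is handled exactly as in the paper: plug $x=\bar{x}+tw'$ into \eqref{uni2}, divide by $\tfrac12 t^2$, and pass to the lower limit in \eqref{eq:d2}. For the second implication, however, you take a genuinely different route. The paper argues directly: from $d^2f(\bar{x}|\bar{v})(u)\ge\mu>\kappa$ at each unit vector $u$ it extracts, via the definition of the lower epi-limit, a time threshold $t_u$ and a neighborhood $N_u$ on which $\Delta_t^2 f(\bar{x}|\bar{v})\ge\kappa$, covers the unit sphere by finitely many $N_{u_i}$ to get a uniform $t_0$, and then uses the positive homogeneity identity $\Delta_t^2 f(\bar{x}|\bar{v})(w)=\|w\|^2\,\Delta_{t\|w\|}^2 f(\bar{x}|\bar{v})(w/\|w\|)$ to convert the uniform bound on the sphere into the growth condition \eqref{uni2} on the explicit ball $B(\bar{x},t_0)$. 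You instead argue by contradiction with a blow-up: a sequence $x_k\to\bar{x}$ violating \eqref{uni2} is normalized by $t_k=\|x_k-\bar{x}\|$, giving unit vectors $w_k$ with $\Delta_{t_k}^2 f(\bar{x}|\bar{v})(w_k)<\kappa$, and after extracting $w_k\to w$ with $\|w\|=1$ you use that $d^2f(\bar{x}|\bar{v})(w)$, being a lower limit over all admissible sequences, is bounded above by the liminf along this one, contradicting \eqref{tiltd22}. Both arguments rest on compactness of the unit sphere, but in different forms: the paper uses a finite open subcover (yielding a constructive, explicit neighborhood $U=B(\bar{x},t_0)$ and avoiding any indirect reasoning), while you use sequential compactness inside a proof by contradiction, which is shorter and dispenses with the rescaling identity at the cost of being non-constructive about the radius of $U$. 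Your closing remark about why the strict inequality $\mu>\kappa$ is needed is consistent with the paper's Example~\ref{exa1}.
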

\begin{proof} \textbf{} Assuming \textbf{(i)}, take any $(\bar{x},\bar{v}) \in \R^n \times \R^n$, $\bar{x} \in \dom f$, and a neighborhood $U$ of $\bar{x}$ on which \eqref{uni2} is satisfied. We have $x = \bar{x} + t w' \in U$ for any $w \in \R^n$, any $t > 0$ sufficiently small, and any $w' \in \R^n$ sufficiently close to $w$. This yields the estimate
$$
\frac{f(\bar{x} + t w') - f(\bar{x}) - \la \bar{v}, t w'\ra}{\frac{1}{2}t^2} \ge \kappa\|w'\|^2.
$$
Passing to the limit on both sides above brings us to
$$
d^2 f(\bar{x}|\bar{v})(w) = \liminf\limits_{\substack{t \downarrow 0\\w' \to w}}  \frac{f(\bar{x} + t w') - f(\bar{x}) - \la v, t w'\ra}{\frac{1}{2}t^2}\ge \liminf\limits_{\substack{t \downarrow 0\\w' \to w}} \kappa\|w'\|^2 = \kappa\|w\|^2,
$$
which thus justifies \eqref{tiltd22} with $\mu = \kappa$. 
\medskip

To verify the opposite implication, suppose that \textbf{(ii)} holds and that $\mu > \kappa$. Then 
\begin{equation}\label{unit u}
d^2 f(\bar{x}|\bar{v})(u) \ge \mu\;\mbox{ for all }\;u\in\R^n\;\mbox{ with }\;\|u\| = 1.
\end{equation}
Pick any $u$ from \eqref{unit u} and deduce from definition \eqref{eq:d2} of the second-order subderivative
that there exist $t_u > 0$ and a neighborhood $N_u$ of $u$ such that
$$
\Delta^2_t f(\bar{x}|\bar{v})(u')\ge\kappa\;\mbox{ for all }\;u'\in N_u\;\mbox{ and }\;t\in(0,t_u).
$$
Since the open sets $N_u$, $\|u\| = 1$ cover the compact set $\{u\in\R^n\;|\;\|u\| = 1\}$, we find finitely many $u_1,\ldots, u_m$ such that the latter set is covered by $\left( N_{u_i} \right)_{i = 1,\ldots,m}$. Define 
$$
t_0 := \min\big\{ t_{u_1},\ldots,t_{u_m}\big\}
$$
and observe that $\Delta^2_t f(\bar{x}|\bar{v})(u) \ge \kappa$ whenever $\|u\| = 1$ and $t \in (0,t_0)$. We have furthermore that $\Delta_t^2 f(\bar{x}|\bar{v})(0) = 0$ and that
$$
\Delta_{t}^2 f(\bar{x}|\bar{v})(w) = \Delta_{t}^2 f(\bar{x}|\bar{v})\Big( \|w\| \frac{w}{\|w\|}\Big) = \|w\|^2 \Delta_{t \|w\|}^2  f(\bar{x}|\bar{v})\Big( \frac{w}{\|w\|}\Big) \ge \|w\|^2\kappa
$$
for all $w \in \R^n \setminus \{0\}$ and all $t > 0$ with $\|t w\| < t_0$. Therefore, 
$$
f(\bar{x} + t w) \ge f(\bar{x}) + \la \bar{v}, t w \ra +  \frac{\kappa}{2}\|t w\|^2\;\mbox{ whenever }\;w \in \R^n,\;t > 0\;\mbox{ with }\;\|t w\| < t_0.
$$
Denoting $U := B(\bar{x},t_0)$, we arrive at \eqref{uni2} and thus complete the proof of the theorem.
\end{proof}

The following example shows that implication {\bf(ii)$\Longrightarrow$(i)} of Theorem~\ref{prop:strongchar} fails if $\mu=\kappa$.

\begin{Example}\label{exa1} {\rm Letting $f(x): = x^3 + \dfrac{\kappa}{2}x^2$ on $\R$ with $\bar{x} = 0$ and $\bar{v} = 0$, we see that $f''(x) = 6x + \kappa$ and
$$
d^2 f(0|0)(w) = f''(0).\;w^2 = \kappa w^2.
$$
Thus \eqref{tiltd22} holds. On the other hand,  \eqref{uni2} is equivalent to the existence of a neighborhood $U$ of $0$ such that $x^3 \ge 0$ for all $x \in U$, which cannot happen.}
\end{Example}

Note that Theorem~\ref{prop:strongchar} is an extension of \cite[Proposition~13.24(c)]{Rockafellar98} giving us a characterization of strong local minimizers of $f$ when $\bar{v} = 0$ via second-order subderivatives. Contrary to the latter, we {\em do not require} that $\bar{v} \in \partial f(\bar{x})$ for this equivalence to hold, and our result explicitly establish a relationship between {\em moduli} of the positive-definiteness of $d^2 f(\bar{x}|\bar{v})$ of  the strong local minimizer in question.\vspace*{0.03in}

The next assessment establishes an important quadratic estimate of second-order subderivatives for variationally $s$-convex extended-real-valued functions. 

\begin{Corollary}\label{cor:d2prox}  Given $s\in\R$, let $f: \R^n \to \overline{\R}$ be variationally $s$-convex at $\bar{x} \in \dom f$ for $\bar{v} \in \partial f(\bar{x})$. Then there exists an $f$-attentive $\ve$-localization $T_\ve$ of $\partial f$ around $(\bar{x},\bar{v})$ with some $\ve>0$ such that 
\begin{equation*}
d^2 f(x|v)(w) \ge s\|w\|^2\;\mbox{ for any }\;(x,v) \in \gph T_\ve\;\mbox{ and }\;w \in \R^n.
\end{equation*}
\end{Corollary}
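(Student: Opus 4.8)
The plan is to chain together three ingredients already available: the localization stability of variational $s$-convexity from Proposition~\ref{prop:proxlan}(ii), the characterization of variational $s$-convexity via quadratic growth from Theorem~\ref{theo:svarchar}, and the growth-to-subderivative implication {\rm\textbf{(i)}}$\Rightarrow${\rm\textbf{(ii)}} of Theorem~\ref{prop:strongchar}, which is valid with $\mu=\kappa$.

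First I would let $\varepsilon_1>0$ be a radius with respect to which $f$ is variationally $s$-convex at $\bar x$ for $\bar v$, fix any $\varepsilon\in(0,\varepsilon_1)$, say $\varepsilon:=\varepsilon_1/2$, and set $T_\varepsilon:=T_\varepsilon(\bar x,\bar v)$, which is an $f$-attentive $\varepsilon$-localization of $\partial f$ around $(\bar x,\bar v)$ in the sense of \eqref{eq:fattentive}. By Proposition~\ref{prop:proxlan}(ii), for every $(x,v)\in\gph T_\varepsilon$ the function $f$ is variationally $s$-convex at $x$ for $v$ (with radius $\varepsilon_1-\varepsilon>0$); note also that $v\in\partial f(x)$ since $\gph T_\varepsilon\subset\gph\partial f$. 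Applying Theorem~\ref{theo:svarchar} at the point $x$ for $v$ yields $\delta>0$, a neighborhood $W$ of $x$, and an $f$-attentive $\delta$-localization $\widetilde T_\delta$ of $\partial f$ around $(x,v)$ such that $f(x')\ge f(\hat x)+\la\hat v,x'-\hat x\ra+\frac{s}{2}\|x'-\hat x\|^2$ for all $x'\in W$ and all $(\hat x,\hat v)\in\gph\widetilde T_\delta$. The pair $(x,v)$ itself satisfies $x\in B(x,\delta)$, $v\in B(v,\delta)$, $f(x)<f(x)+\delta$, and $v\in\partial f(x)$, so $(x,v)\in\gph\widetilde T_\delta$; specializing the above inequality to $(\hat x,\hat v)=(x,v)$ gives the second-order growth condition
$$f(x')\ge f(x)+\la v,x'-x\ra+\frac{s}{2}\|x'-x\|^2\quad\mbox{for all }x'\in W,$$
which is exactly assertion {\rm\textbf{(i)}} of Theorem~\ref{prop:strongchar} at $x$ for $v$ with $\kappa=s$. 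Invoking implication {\rm\textbf{(i)}}$\Rightarrow${\rm\textbf{(ii)}} of that theorem (with $\mu=\kappa=s$) produces $d^2 f(x|v)(w)\ge s\|w\|^2$ for all $w\in\R^n$, and since $(x,v)\in\gph T_\varepsilon$ was arbitrary, this is the assertion of the corollary.

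I do not anticipate a genuine obstacle: the argument is a bookkeeping composition of results proved earlier in the paper. The only points requiring a bit of care are verifying that $(x,v)$ lies in the graph of its own $f$-attentive localization $\widetilde T_\delta$ (so that the quadratic growth inequality of Theorem~\ref{theo:svarchar} can be read off at $(\hat x,\hat v)=(x,v)$) and that $v\in\partial f(x)$, both of which are immediate from the inclusion $\gph T_\varepsilon\subset\gph\partial f$ and the definition \eqref{eq:fattentive}. Alternatively, instead of Theorem~\ref{theo:svarchar} one could unpack Definition~\ref{defi:varconv} directly: the dominating $s$-convex function $\widehat f$ coincides with $f$ at $(x,v)$, lies below $f$ near $x$, and its subgradient inequality at $x$ transfers to $f$, giving the same growth estimate; but using Theorem~\ref{theo:svarchar} keeps the write-up shortest.
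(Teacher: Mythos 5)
Your proposal is correct: the statement does follow by feeding the quadratic growth inequality \eqref{eq:svarconv} at each pair $(x,v)$ in an $f$-attentive localization into implication \textbf{(i)}$\Rightarrow$\textbf{(ii)} of Theorem~\ref{prop:strongchar} with $\mu=\kappa=s$, which is exactly the paper's strategy. The only difference is bookkeeping: the paper applies Theorem~\ref{theo:svarchar} just once at $(\bar x,\bar v)$ and observes that its inequality is already uniform over all $(x,v)\in\gph T_\ve$ with a common neighborhood $U$ (which serves as a neighborhood of each such $x$), so no re-localization is needed; you instead propagate variational $s$-convexity to each nearby pair via Proposition~\ref{prop:proxlan}(ii) and then invoke Theorem~\ref{theo:svarchar} a second time at $(x,v)$, specializing to $(\hat x,\hat v)=(x,v)$. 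Your detour is logically sound (and incidentally makes explicit the small point that the growth holds on a genuine neighborhood of $x$ rather than merely of $\bar x$), but it is redundant given the uniform quantification already built into the statement of Theorem~\ref{theo:svarchar}.
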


\begin{proof} 
Assume that $f$ is variationally $s$-convex at $\bar{x}$ for $\bar{v}$ for some $s \in \R$. It follows from Theorem~\ref{theo:svarchar} that there exist an $f$-attentive $\ve$localization $T_\ve$ of $\partial f$ around $(\bar{x},\bar{v})$ and a neighborhood $U$ of $\bar{x}$ such that 
\begin{equation}\label{eq:growcondxv}
f(x') \ge f(x) + \la v, x' - x \ra + \frac{s}{2}\|x' - x\|^2\;\mbox{ whenever }\;(x,v) \in \gph T_\ve\;\mbox{ and }\; x' \in U.
\end{equation}
For any $(x,v) \in \gph T_\ve$, the set $U$ is also a neighborhood of $x$, and so \eqref{eq:growcondxv} is a second-order growth condition around $x$. Applying Theorem~\ref{prop:strongchar} for such $(x,v)$ tells us that
$$
d^2 f(x|v)(w) \ge s\|w\|^2\;\mbox{ for all }\;w \in \R^n,
$$
which therefore completes the proof of the corollary.
\end{proof}

\section{Properties of Generalized Twice Differentiable Functions}\label{sec:gtd}

In this section, we study the notion of \textit{generalized twice differentiability} of extended-real-valued function, the terms that has been recently coined by Rockafellar in \cite{roc}. This concept acts as a bridge between merely twice-epi differentiability and classical twice differentiability, requiring the good behavior of second-order subderivatives while not restricting the function too much. The history of this notion can be traced back to the paper \cite{genhes}, where a characterization for global minimizers was established in Theorem~3.8 therein. Following \cite[Definition~3.7]{genhes}, we have the following extension of ordinary quadratic forms. For any matrix $A \in \R^{n \times n}$, consider the quadratic function
$$
q_A(x) :=\la x, Ax \ra,\quad x\in\R^n.
$$
\begin{Definition}\label{def:quad} {\rm A function $q: \R^n \to (-\infty , \infty ]$ is called a {\em generalized quadratic form} if it is expressible as $q = \dfrac{1}{2}q_A + \delta_L$, where $L$ is a linear subspace of $\R^n$, and where $A \in \R^{n \times n}$ is a symmetric matrix.} 
\end{Definition}
It follows from Definition~\ref{def:quad} that $q(0) = 0$  and $\partial q$ is a \textit{generalized linear mapping}, i.e., $\gph \partial q$ is a subspace of $\R^n \times \R^n$. Generalized quadratic forms allow us to formulate the aforementioned notion from \cite{roc}.

\begin{Definition}\rm \label{defi:quaddiff}
A function $f: \R^n \to \overline{\R}$ is {\em generalized twice differentiable} at $\bar{x}$ for $\bar{v} \in \partial f(\bar{x})$ if it is twice epi-differentiable at $\bar{x}$ for $\bar{v}$ and its second subderivative $d^2 f(\bar{x}|\bar{v})$ is a generalized quadratic form.
\end{Definition}

It is easy to see that the classical twice differentiability of $f$ at $\ox$ yields the generalized one with
$$
d^2 f(\bar{x}|\nabla f(\bar{x}))(w) = \la w, \nabla^2 f(\bar{x}) w \ra\;\mbox{ for all }\;w\in\R^n
$$
The next theorem shows that generalized  twice differentiability may hold for (first-order) nonsmooth functions and fully clarifies this issue for any norm function on $\R^n$ with calculating its second subderivative. 

\begin{Theorem}\label{prop:normgtd} Let $f = \|\cdot\|$ be a norm $($not necessary Euclidean$)$ function on $\R^n$. Then we have:
\begin{itemize}
\item[\rm \textbf{(i)}] Whenever $v \in \partial f(0)$, $f$ is twice epi-differentiable at $0$ for $v$ and the corresponding second-order subderivative is calculated by
\begin{equation}\label{d2norm}
d^2 f(0|v) = \delta_{K(0,v)}\;\mbox{ as }\;v \in \partial f(0)
\end{equation}
via the critical cone $K(0,v)$ of $f$ at $0$ for $v$ taken from \eqref{eq:critcone}.

\item[\rm \textbf{(ii)}] $f$ is generalized twice differentiable at $0$ for $v$ if and only if $v \in \mathrm{int}\,\partial f(0)$.
\end{itemize}
\end{Theorem}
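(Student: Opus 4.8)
The plan is to prove both parts by a direct computation, exploiting that $f=\|\cdot\|$ is a finite, convex, positively homogeneous function of degree one. Since $f$ is convex, its subdifferential \eqref{eq:subdiff} at the origin coincides with the convex one, so $\partial f(0)=\{v\in\R^n\mid\langle v,x\rangle\le\|x\|\ \text{for all }x\in\R^n\}$ is the dual unit ball, a convex body with $0$ in its interior. Evaluating the first-order difference quotient in \eqref{1sub} gives $\Delta_t f(0)(w')=\|w'\|$, hence $df(0)(w)=\|w\|$, and by \eqref{eq:critcone} the critical cone is $K(0,v)=\{w\mid\langle v,w\rangle=\|w\|\}$, a closed convex cone. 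Setting $g(w):=\|w\|-\langle v,w\rangle$, the assumption $v\in\partial f(0)$ gives $g\ge 0$ on $\R^n$ with $\{g=0\}=K(0,v)$; this function carries all the information needed below.

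For \textbf{(i)}, positive homogeneity of the norm yields $\Delta_t^2 f(0|v)(w)=2g(w)/t$ for all $w\in\R^n$ and $t>0$. When $w\in K(0,v)$ this quotient vanishes identically, so the constant sequence $w'\equiv w$ shows the lower (and the upper) epigraphical limit at $w$ is $\le 0$; combined with $g\ge 0$, which forces the lower limit to be $\ge 0$, the common value is $0$. When $w\notin K(0,v)$, continuity of $g$ and $g(w)>0$ force $\Delta_t^2 f(0|v)(w')\to+\infty$ as $t\downarrow 0$ and $w'\to w$, so both epigraphical limits at $w$ equal $+\infty$. By Proposition~\ref{prop:epichar} this shows that $\Delta_t^2 f(0|v)$ epi-converges to $\delta_{K(0,v)}$ as $t\downarrow 0$, which is exactly twice epi-differentiability at $0$ for $v$ together with the formula \eqref{d2norm}.

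For \textbf{(ii)}, by \textbf{(i)} and Definition~\ref{defi:quaddiff} the function $f$ is generalized twice differentiable at $0$ for $v$ iff $\delta_{K(0,v)}$ is a generalized quadratic form. Comparing effective domains in Definition~\ref{def:quad} (the domain of $\tfrac12 q_A+\delta_L$ equals $L$), and noting that $A=0$ is admissible, this holds iff $K(0,v)$ is a linear subspace. A convex cone $K$ is a subspace iff $K=-K$; but $w\in K(0,v)$ and $-w\in K(0,v)$ would give $\langle v,w\rangle=\|w\|=-\langle v,w\rangle$, forcing $\|w\|=0$, so $K(0,v)$ is a subspace iff $K(0,v)=\{0\}$. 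It then remains to show $K(0,v)=\{0\}\iff v\in\mathrm{int}\,\partial f(0)$. If $v\in\mathrm{int}\,\partial f(0)$ and some $w\ne 0$ lies in $K(0,v)$, then $v+\delta w\in\partial f(0)$ for small $\delta>0$, whence $\|w\|\ge\langle v+\delta w,w\rangle=\|w\|+\delta\langle w,w\rangle$, forcing $\langle w,w\rangle\le 0$, a contradiction. Conversely, if $K(0,v)=\{0\}$, then $g$ is continuous and strictly positive on the compact Euclidean unit sphere, hence bounded below there by some $m>0$; by homogeneity $g(x)\ge m\|x\|_{\mathrm e}$ for all $x$ (with $\|\cdot\|_{\mathrm e}$ the Euclidean norm), i.e.\ $\langle v,x\rangle\le\|x\|-m\|x\|_{\mathrm e}$, and Cauchy--Schwarz then gives $v+u\in\partial f(0)$ whenever $\|u\|_{\mathrm e}\le m$, so $v\in\mathrm{int}\,\partial f(0)$. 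Chaining these equivalences proves \textbf{(ii)}.

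The argument is elementary throughout; the only places that call for a little care are the handling of the upper epigraphical limit in \textbf{(i)} (no closed-form expression is available, so recovery sequences must be produced by hand, as above) and the compactness argument characterizing $\mathrm{int}\,\partial f(0)$ in \textbf{(ii)}.
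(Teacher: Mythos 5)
Your proof is correct, and its backbone is the same as the paper's: compute $df(0)=\|\cdot\|$, hence $K(0,v)=\{w\mid\|w\|=\langle v,w\rangle\}$, observe $\Delta_t^2 f(0|v)(w)=2(\|w\|-\langle v,w\rangle)/t$, identify the epi-limit as $\delta_{K(0,v)}$, and reduce part (ii) to whether $K(0,v)$ is a linear subspace, equivalently $\{0\}$. The two places where you deviate are worth noting. First, for twice epi-differentiability the paper exploits that $\Delta_t^2 f(0|v)$ is nondecreasing as $t\downarrow 0$ and cites the monotone epi-convergence result \cite[Proposition~7.4(d)]{Rockafellar98}, whereas you verify the lower and upper epigraphical limits directly through Proposition~\ref{prop:epichar} with explicit recovery sequences; your route is more self-contained, the paper's is shorter. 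Second, in (ii) the paper works with the dual norm throughout: $\mathrm{int}\,\partial f(0)=\{v\mid\|v\|_*<1\}$, with $\|v\|_*<1$ forcing $K(0,v)=\{0\}$ and, on the boundary $\|v\|_*=1$, attainment of the dual-norm supremum producing a nonzero $\bar x\in K(0,v)$ so that $K(0,v)$ cannot be a subspace. You instead prove the equivalence $K(0,v)=\{0\}\Longleftrightarrow v\in\mathrm{int}\,\partial f(0)$ without ever introducing $\|\cdot\|_*$, via the perturbation $v+\delta w$ in one direction and a compactness/margin estimate $\|x\|-\langle v,x\rangle\ge m\|x\|_{\mathrm e}$ in the other; this is slightly longer but elementary and makes explicit (through the effective-domain comparison) the step the paper leaves implicit, namely that $\delta_{K(0,v)}$ is a generalized quadratic form only if $K(0,v)$ is a subspace. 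Both arguments ultimately rest on the same finite-dimensional compactness of the unit sphere.
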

\begin{proof} It follows from \eqref{1sub} the first subderivative expressions
$$
df(0)(w) = \liminf\limits_{\substack{t \downarrow 0\\ w' \to w}} \frac{f(0 + t w') - f(0)}{t} = \liminf\limits_{\substack{t \downarrow 0\\ w' \to w}} \frac{t\|w'\|}{t} = \liminf\limits_{\substack{t \downarrow 0\\ w' \to w}} \|w'\| = \|w\|,\quad w\in\R^n,
$$  
which yield the critical cone representation
\begin{equation}\label{eq:critcone2}
K(0,v)=\big\{ w \in \R^n\;\big|\;df(0)(w) = \la v, w \ra\big\}=\big\{ w \in \R^n\;\big|\;\|w\| = \la v, w \ra \big \}\;\mbox{ for any  }\;v \in \partial f(0 ).
\end{equation}
To verify \textbf{(i)}, recall that the \textit{dual norm} of $\|\cdot\|$, denoted by $\|\cdot\|_*$, is defined by
$$
\|z\|_* := \sup\big\{ \la z, x \ra\;\big|\;\|x\| = 1\big\}. 
$$
We know that $\partial f(0) = \big\{ v \in \R^n\;\big|\; \|v\|_* \le 1\big\}$, which implies by the Cauchy-Schwarz inequality that
\begin{equation}\label{eq:CSnorm}
\la v, w \ra \le \|v\|_* \|w\| \le \|w\|\;\mbox{ for all }\;v \in \partial f(0),\quad w\in\R^n.
\end{equation}
Therefore, whenever $w \in \R^n$ and $t > 0$, we get
$$
\Delta_t^2 f(0|v)(w) = \dfrac{f(0 + tw) - f(0) - t\la v, w \ra}{\frac{1}{2}t^2} = \dfrac{\|w\| - \la v,w\ra}{\frac{1}{2}t}.
$$
It follows from \eqref{eq:CSnorm} that the family of functions $\{\Delta_t^2 f(0|v)\}_{t \in (0,\infty)}$ is 
nondecreasing as $t \downarrow 0$, which yields its epigraphical convergence by \cite[Proposition~7.4(d)]{Rockafellar98} and hence justifies the twice epi-differentiability of $f$ at $0$ for any $v\in\partial f(0)$. To prove {\bf(i)}, it remains verifying \eqref{d2norm}. Indeed, it follows from $\Delta_t^2 f(0|v) \ge 0$ as $t>0$ that $d^2 f(0|v) \ge 0$. Pick  any $w \in K(0,v)$ and deduce from Proposition~\ref{prop:epichar} that
$$
d^2 f(0|v)(w) \le \dfrac{\|w\| - \la v, w \ra}{\frac{1}{2}t} = 0,
$$
which tells us that $d^2 f(0|v)(w) = 0$ by \eqref{eq:CSnorm}. In the reverse direction, taking $w \notin K(0,v)$ gives us $\|w\| - \la v, w\ra > 0$, which amounts to saying that
$$
d^2 f(0|v)(w) = \liminf\limits_{\substack{t \downarrow 0 \\ w' \to w}} \Delta_t^2 f(0|v)(w') = \liminf\limits_{\substack{t \downarrow 0 \\ w' \to w}} \dfrac{\|w'\| - \la v,w'\ra}{\frac{1}{2}t} =\infty.
$$
This brings us to \eqref{d2norm} and completes the proof of {\bf(i)}.\vspace*{0.05in}

To proceed with the proof of \textbf{(ii)}, we obviously have $\mathrm{int}\,\partial f(0) = \{ v\in \R^n \mid \|v\|_* < 1 \}$. Pick any $v \in \R^n$ with $\|v\|_* < 1$ and fix $w \in K(0,v)$. It follows from  \eqref{eq:CSnorm} that
$$
\|w\| = \la v, w \ra \le \|v\|_*\|w\|,
$$
which yields $(1 - \|v\|_*).\|w\| \le 0$. Since $1 - \|v\|_* > 0$, the latter tells us that $w = 0$. Therefore, for all $v \in \mathrm{int}\,\partial f(0)$, we get $K(0,v) = \{0\}$, which leads us to $d^2 f(0|v) = \delta_{\{0\}}$. This readily justifies the generalized twice differentiability of $f$ is at $0$ for $v$ for any $v \in \mathrm{int}\,\partial f(0)$.\vspace*{0.03in} 

Now we show that for any $v\in\R^n$ with $\|v\|_* = 1$, the cone $K(0,v)$ cannot be a linear subspace and hence the generalized twice differentiability of $f = \|\cdot\|$ at $0$ for $v$ fails due to \eqref{d2norm}. Indeed, take any $v \in \R^n$ with $\|v\|_* = 1$ and deduce from the dual norm definition the existence of $\bar{x}$ such that $\|\bar{x}\| = 1$ and $\|v\|_* = \la v, \bar{x} \ra$. Since $\|v\|_* = 1$, it follows that
$$
\|\bar{x}\| = 1 = \la v,\bar{x} \ra.
$$
By \eqref{eq:critcone2}, the latter amounts to $\bar{x} \in K(0,v)$. Assuming by the contrary that $K(0,v)$ is a linear subspace ensures that $-\bar{x} \in K(0,v)$, and hence
$$
-\|-\bar{x}\| = \la v, \bar{x} \ra = \|\bar{x}\|.
$$
This yields $\|\bar{x}\| = 0$, which is a contradiction. Therefore, for each $v \in \R^n$ with $\|v\|_* = 1$, the cone $K(0,v)$ cannot be a linear subspace, and thus $f$ is not generalized twice differentiable at $0$ for $v$.
\end{proof}

On the other hand, the next theorem shows that {\em generalized} twice differentiability for {\em $\mathcal{C}^{1,1}$ functions} reduces to their {\em classical} twice differentiability at the point in question. The following technical result taken from \cite[Lemma A.11]{newton} is useful in what follows.

\begin{Lemma}\label{lem:c11ineq} Let $f: \R^n \to \R$ be differentiable on the given interval $[x_1,x_2]\subset\R^n$, and let the gradient $\nabla f$ be Lipschitz continuous on that interval with constant $L > 0$. Then we have
$$
|f(x_2) - f(x_1) - \la \nabla f(x_1), x_2 - x_1 \ra | \le \frac{L}{2} \|x_2 - x_1 \|^2. 
$$ 
\end{Lemma}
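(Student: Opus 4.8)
The plan is to reduce the multivariate statement to a one-dimensional one by restricting $f$ to the segment joining $x_1$ and $x_2$. Concretely, I would set $\varphi(t):=f\big(x_1+t(x_2-x_1)\big)$ for $t\in[0,1]$. Since $f$ is differentiable along the interval $[x_1,x_2]$, the chain rule gives that $\varphi$ is differentiable on $[0,1]$ with $\varphi'(t)=\la\nabla f\big(x_1+t(x_2-x_1)\big),\,x_2-x_1\ra$. Because $\nabla f$ is Lipschitz on $[x_1,x_2]$ with constant $L$, the map $t\mapsto\nabla f(x_1+t(x_2-x_1))$ is Lipschitz on $[0,1]$, hence continuous, so $\varphi'$ is continuous on $[0,1]$ and the fundamental theorem of calculus applies to $\varphi$.

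Next I would write $f(x_2)-f(x_1)=\varphi(1)-\varphi(0)=\int_0^1\varphi'(t)\,\d t$ and subtract the linear term $\la\nabla f(x_1),x_2-x_1\ra=\int_0^1\la\nabla f(x_1),x_2-x_1\ra\,\d t$ to obtain
$$
f(x_2)-f(x_1)-\la\nabla f(x_1),x_2-x_1\ra=\int_0^1\la\nabla f\big(x_1+t(x_2-x_1)\big)-\nabla f(x_1),\,x_2-x_1\ra\,\d t.
$$
Then I would take absolute values, pull the absolute value inside the integral, and apply the Cauchy--Schwarz inequality pointwise in $t$, followed by the Lipschitz bound $\|\nabla f(x_1+t(x_2-x_1))-\nabla f(x_1)\|\le L\,\|t(x_2-x_1)\|=Lt\,\|x_2-x_1\|$. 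This yields the estimate $\int_0^1 Lt\,\|x_2-x_1\|^2\,\d t=\tfrac{L}{2}\|x_2-x_1\|^2$, which is exactly the claimed inequality.

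The only delicate point is the legitimacy of the fundamental theorem of calculus for $\varphi$ under the stated hypotheses, i.e.\ that $f$ is merely assumed differentiable (not a priori $\mathcal C^1$) on the segment. This is not actually an obstacle here: the Lipschitz continuity of $\nabla f$ on $[x_1,x_2]$ forces $\varphi'$ to be continuous on $[0,1]$, so the classical FTC for continuously differentiable functions of one variable suffices and no finer (e.g.\ absolute continuity) argument is needed. An alternative route avoiding any integration would be to fix the linear functional $\ell(w):=\la\nabla f(x_1),w\ra$ and apply the mean value inequality to the scalar function $t\mapsto\varphi(t)-t\,\ell(x_2-x_1)$, whose derivative has modulus at most $Lt\|x_2-x_1\|^2$ bounded by $L\|x_2-x_1\|^2$ on $[0,1]$; a slightly more careful telescoping over a partition of $[0,1]$ recovers the sharper constant $L/2$. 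Either way the computation is routine once the reduction to the segment is in place.
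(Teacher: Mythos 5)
Your argument is correct and complete. Note that the paper does not prove this lemma at all: it is quoted verbatim from \cite[Lemma~A.11]{newton} (Izmailov--Solodov), so there is no in-paper proof to compare against; what you have written is the standard ``descent lemma'' argument that the cited source also uses. Your reduction to $\varphi(t)=f(x_1+t(x_2-x_1))$, the identity
$$
f(x_2)-f(x_1)-\la\nabla f(x_1),x_2-x_1\ra=\int_0^1\la\nabla f\big(x_1+t(x_2-x_1)\big)-\nabla f(x_1),\,x_2-x_1\ra\,\d t,
$$
and the pointwise bound $L t\|x_2-x_1\|^2$ integrated over $[0,1]$ give exactly the constant $L/2$, and you correctly dispose of the only delicate point: the Lipschitz continuity of $\nabla f$ along the segment makes $\varphi'$ continuous, so the classical fundamental theorem of calculus suffices even though $f$ is only assumed differentiable (not a priori $\mathcal{C}^1$) there. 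The alternative mean-value/telescoping route you sketch at the end is vaguer (the ``more careful telescoping'' would itself amount to a Riemann-sum version of the same integral estimate), but since it is offered only as an aside, it does not affect the validity of your main proof.
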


Here is the aforementioned theorem whose both parts are important for the subsequent applications. 

\begin{Theorem}\label{prop:d2finite} Let $f: \R^n \to \overline{\R}$ be differentiable on a neighborhood $U$ of $\bar{x}$ having the Lipschitz continuous gradient $\nabla f$ on $U$ with modulus $L > 0$. The following assertions hold:
\begin{itemize}
\item[\rm\textbf{(i)}] For all $x \in U$, we have the estimate
\begin{equation}\label{eq:d2finite}
\big|d^2 f\big(x|\nabla f(x)\big)(w)\big| \le L \|w\|^2\;\mbox{ whenever }\; w \in \R^n,
\end{equation}
which ensures, in particular, that $d^2 f(x|\nabla f(x))$ is finite on $\R^n$.

\item[\rm\textbf{(ii)}] For any point $x \in U$, the generalized twice differentiability of $f$ at $x$ for $\nabla f(x)$ is equivalent to the twice differentiability of $f$ at $x$ in the classical sense.
\end{itemize}
\end{Theorem}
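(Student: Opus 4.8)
The plan is to establish (i) first by a direct estimate of the second-order difference quotients, and then to deduce (ii) from (i) together with a convexification trick exploiting the Lipschitz continuity of $\nabla f$. For (i), fix $x\in U$ and $w\in\R^n$. Since $U$ is open, whenever $w'$ lies in a bounded neighborhood of $w$ and $t>0$ is small enough, the segment $[x,x+tw']$ is contained in $U$, where $\nabla f$ is $L$-Lipschitz; Lemma~\ref{lem:c11ineq} then gives $|f(x+tw')-f(x)-t\la\nabla f(x),w'\ra|\le\frac{L}{2}t^2\|w'\|^2$, i.e.\ $-L\|w'\|^2\le\Delta_t^2 f(x|\nabla f(x))(w')\le L\|w'\|^2$. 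Passing to the $\liminf$ as $t\downarrow 0$, $w'\to w$ in \eqref{eq:d2} and using the continuity of $w'\mapsto L\|w'\|^2$ yields $-L\|w\|^2\le d^2 f(x|\nabla f(x))(w)\le L\|w\|^2$, which is \eqref{eq:d2finite}; in particular $d^2 f(x|\nabla f(x))$ is finite-valued.

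For (ii), the implication "classically twice differentiable $\Rightarrow$ generalized twice differentiable" is the one already recorded right after Definition~\ref{defi:quaddiff} (via \cite[Proposition~13.8]{Rockafellar98}). For the converse, assume $f$ is generalized twice differentiable at $x$ for $\nabla f(x)$, so $f$ is twice epi-differentiable there and $d^2 f(x|\nabla f(x))=\frac12 q_A+\delta_{\Lambda}$ with $A$ symmetric and $\Lambda$ a subspace. By (i) this function is finite on $\R^n$, forcing $\Lambda=\R^n$, hence $d^2 f(x|\nabla f(x))(w)=\frac12\la w,Aw\ra$ for all $w$. Now $\nabla f$ being $L$-Lipschitz on $U$ makes the gradient of $g:=f+\frac{L}{2}\|\cdot-x\|^2$ monotone on any convex subset of $U$, so $g$ is convex near $x$; intersecting with a small closed ball $\overline{B}(x,\rho)\subset U$ produces a proper l.s.c.\ convex function $\widetilde g:=g+\delta_{\overline{B}(x,\rho)}$ that coincides with $g$ on a neighborhood of $x$, whence $d^2\widetilde g(x|\nabla f(x))=d^2 g(x|\nabla f(x))$. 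Since $\Delta_t^2(\frac{L}{2}\|\cdot-x\|^2)(x|0)\equiv L\|\cdot\|^2$ is a fixed continuous function, the convex functions $\Delta_t^2\widetilde g(x|\nabla f(x))$ coincide with $\Delta_t^2 f(x|\nabla f(x))+L\|\cdot\|^2$ on $\overline{B}(0,\rho/t)$ (and equal $+\infty$ outside), and hence epi-converge, as $t\downarrow 0$, to the finite convex function $w\mapsto\frac12\la w,Aw\ra+L\|w\|^2$; here epi-convergence is preserved under adding the fixed continuous perturbation and under adjoining the indicators $\delta_{\overline{B}(0,\rho/t)}$ whose domains exhaust $\R^n$ (cf.\ Proposition~\ref{prop:epichar}).

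By the uniform convergence of epi-convergent sequences of convex functions on compact subsets of the interior of the limit's (here full) domain \cite[Theorem~7.17]{Rockafellar98}, the convergence $\Delta_t^2\widetilde g(x|\nabla f(x))\to\frac12 q_A+L\|\cdot\|^2$ is uniform on the unit sphere; subtracting the constant $L$ gives $\Delta_t^2 f(x|\nabla f(x))(u)\to\frac12\la u,Au\ra$ uniformly over $\|u\|=1$. Finally, for $w\to 0$ with $w\neq 0$, setting $t:=\|w\|$ and $u:=w/\|w\|$ we obtain
$$f(x+w)-f(x)-\la\nabla f(x),w\ra=\tfrac12\|w\|^2\,\Delta_t^2 f(x|\nabla f(x))(u)=\tfrac14\la w,Aw\ra+o(\|w\|^2),$$
which is exactly the second-order Taylor expansion expressing classical twice differentiability of $f$ at $x$ (with symmetric Hessian $\frac12 A$, consistently with $d^2 f(x|\nabla f(x))(w)=\la w,(\frac12 A)w\ra$). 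The main obstacle is precisely this last chain: the raw difference quotients $\Delta_t^2 f(x|\nabla f(x))$ need not be convex, so the convex epi-convergence machinery cannot be applied to them directly; the remedy is to restore convexity by adding $\frac{L}{2}\|\cdot-x\|^2$, after which one must still verify that the localization to $\overline{B}(x,\rho)$ leaves the second subderivative at the interior point $x$ unchanged and that the resulting uniform convergence on the sphere upgrades to the pointwise $o(\|w\|^2)$ Taylor estimate.
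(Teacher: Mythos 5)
Your part (i) coincides with the paper's argument: the same application of Lemma~\ref{lem:c11ineq} to the second-order difference quotients followed by the passage to the lower epi-limit, so there is nothing to add there. For the converse in (ii), your route is genuinely different from the paper's. The paper observes that a $\mathcal{C}^{1,1}$ function is prox-regular near $x$ (\cite[Proposition~13.34]{Rockafellar98}), invokes \cite[Theorem~6.7]{proxreg} to pass from generalized twice differentiability with finite second subderivative to a quadratic expansion of $f$ at $x$, and then concludes classical twice differentiability from \cite[Corollary~13.42]{Rockafellar98} combined with the differentiability of $f$ near $x$. You instead derive the quadratic expansion by hand: convexify via $g=f+\frac{L}{2}\|\cdot-x\|^2$, localize to a ball so that the difference quotients are convex, use the fact that epi-convergence of convex functions to a finite convex limit is uniform on compacta to get uniform convergence of $\Delta_t^2 f(x|\nabla f(x))$ on the unit sphere, and then rescale. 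That chain is correct (the handling of the added quadratic, of the indicators $\delta_{\overline{B}(0,\rho/t)}$, and the scaling step are all fine), and it is more self-contained than the paper's appeal to the prox-regularity machinery.

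The genuine gap is your final sentence. Having $f(x+w)-f(x)-\la\nabla f(x),w\ra=\frac{1}{4}\la w,Aw\ra+o(\|w\|^2)$ is \emph{not} "exactly" classical twice differentiability: it is twice differentiability in the extended (expansion) sense, whereas the theorem asserts differentiability of the gradient mapping $\nabla f$ at $x$. In general the two notions differ --- the function $t\mapsto t^3\sin(1/t)$ has a second-order expansion at $0$ with zero quadratic term but no second derivative there --- and bridging them is precisely why the paper needs \cite[Corollary~13.42]{Rockafellar98} together with the differentiability of $f$ on a neighborhood of $x$. Under your standing hypothesis the step is true, but it must be argued. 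Either cite that corollary, or close the gap directly using the Lipschitz gradient: put $h(w):=f(x+w)-f(x)-\la\nabla f(x),w\ra-\frac{1}{4}\la w,Aw\ra$, so $h(w)=o(\|w\|^2)$, $\nabla h(0)=0$, and $\nabla h$ is Lipschitz with some modulus $L'$ near $0$. For small $w$ with $g:=\nabla h(w)\ne 0$, integrating $\la\nabla h(w+\tau g/\|g\|),g/\|g\|\ra\ge\|g\|-L'\tau$ over $\tau\in[0,\|g\|/L']$ gives $h\bigl(w+\tfrac{g}{L'}\bigr)-h(w)\ge\frac{\|g\|^2}{2L'}$, while $\|w+\tfrac{g}{L'}\|\le 2\|w\|$ because $\|g\|\le L'\|w\|$; hence $\|\nabla h(w)\|^2\le 2L'\,o(\|w\|^2)$, i.e.\ $\nabla f(x+w)=\nabla f(x)+\tfrac{1}{2}Aw+o(\|w\|)$, which is classical twice differentiability with $\nabla^2 f(x)=\tfrac{1}{2}A$. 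With this addition your argument is complete.
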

\begin{proof} To verify \textbf{(i)}, fix any vector $w \in \R^n$ and apply Lemma~\ref{lem:c11ineq}, which tells us that for all $w'$ sufficiently close to $w$ and all $t>0$ sufficiently small with $x + tw' \in U$, we get
$$
|f(x + t w') - f(x) - \la \nabla f(x), t w' \ra\big| \le \frac{L}{2} t^2 \|w'\|^2. 
$$
This yields, for all such $w'$ and $t$, the estimate
$$
\big|\Delta_t^2 f\big(x|\nabla f(x)\big)(w')| \le L\|w'\|^2.
$$
Passing to the limits inferior when $t \downarrow 0$, $w' \to w$ and using \eqref{eq:d2} bring us to
$$
|d^2 f(x|\nabla f(x))(w)| \le L \|w\|^2,
$$
which clearly implies that $d^2 f(x|\nabla f(x))$ is finite.\medskip

To proceed with the proof of \textbf{(ii)}, we only need to show that if $f$ is generalized twice differentiable at some $x\in U$ for $\nabla f(x)$, then $f$ is twice differentiable at $x$ in the classical sense. Assuming that $f$ is generalized twice differentiable at $x$ for $\nabla f(x)$, we find a symmetric matrix $A$ and a linear subspace $L$ such that
\begin{equation}\label{eq:d2c11gtd}
d^2 f(x|\nabla f(x)) = q_A + \delta_L.
\end{equation}
Since $f$ is of class $\mathcal{C}^{1,1}$ around $x$, it is prox-regular everywhere near $x$ by \cite[Proposition~13.34]{Rockafellar98}. It follows from assertion \textbf{(i)} that $d^2f(x|\nabla f(x))$ is finite everywhere, i.e., the linear subspace $L$ in \eqref{eq:d2c11gtd} is the entire $\R^n$. Applying \cite[Theorem~6.7]{proxreg} tells us that $f$ has a quadratic expansion at $x$ meaning that
$$
\begin{aligned}
f(y) &= f(x) + \la \nabla f(x), y - x \ra + \frac{1}{2}d^2 f(x|\nabla f(x))(y - x) + o(|y - x|^2)\\
&= f(x) + \la \nabla f(x), y - x \ra + \frac{1}{2}\la y - x,A(y - x) \ra + o(|y - x|^2)\;\mbox{ for all }\;y\;\mbox{ near }\;x.
\end{aligned}
$$
Combining finally \cite[Corollary~13.42]{Rockafellar98} with the differentiability $f$ on a neighborhood of $x$, we conclude that $f$ is twice differentiable at $x$ in the classical sense and thus complete the proof of the theorem.
\end{proof}

Now we present two examples showing that the assumptions of Theorem~\ref{prop:d2finite} are essential for the conclusion.
\begin{Example}\label{rmk:C1gtd}
The conclusion of Theorem~{\rm\ref{prop:d2finite}(ii)} fails if $f$ is $\mathcal{C}^1$-smooth but not of class  $\mathcal{C}^{1,1}$ around $\ox$.
\end{Example}
\begin{proof} Consider the function $f: \R \to \R$ defined by $f(x) := |x|^{3/2},\;x \in \R$. Then $f$ is $\mathcal{C}^1$-smooth around $\bar{x} = 0$ but is not of class $\mathcal{C}^{1,1}$ around that point. This function is obviously not twice differentiable at $\bar{x} = 0$ in the classical sense. Let us check that $f$ is generalized twice differentiable at $\bar{x} = 0$ for $\bar{v}:=f(\bar x) = 0$ . To calculate $d^2 f(0|0)$, we first get that
$$
\Delta_t^2 f(0|0)(w) = \frac{f(0 + tw) - f(0) - t.0.w}{\frac{1}{2}t^2} = \frac{|tw|^{3/2}}{\frac{1}{2}t^2} = \frac{|w|^{3/2}}{\frac{1}{2}\sqrt{t}}\;\mbox{ for all }\;w\in\R^n\;\mbox{ and }\;t>0.
$$
It follows from Proposition~\ref{prop:epichar} that
\begin{equation}\label{eq:gtdc2d1}
\Big[\eliminf\limits_{t \downarrow 0} \Delta_t^2 f(0|0)\Big](0) = \Big[\elimsup\limits_{t \downarrow 0} \Delta_t^2 f(0|0)\Big](0) = 0, 
\end{equation}
which readily tells us by  \eqref{eq:d2} that
$$
d^2 f(0|0)(1) = d^2 f(0|0)(-1) =\infty.
$$
The positive homogeneity of degree $2$ of $d^2 f(0|0)$ ensures that $d^2 f(0|0) = \delta_{\{0\}}$, which is a generalized quadratic form on $\R$. Furthermore, observe the relationships 
\begin{equation}\label{eq:gtdc2d2}
\Big[\eliminf\limits_{t \downarrow 0} \Delta_t^2 f(0|0)\Big](w) = \Big[\elimsup\limits_{t \downarrow 0} \Delta_t^2 f(0|0)\Big](w) = \infty,\quad  w \in\R,
\end{equation}
which being combined with \eqref{eq:gtdc2d1} and \eqref{eq:gtdc2d2} establish twice epi-differentiability of $f$ at $0$ for $0$. 
\end{proof}

\begin{Example}\label{rmk:gtd} The conclusion of Theorem~{\rm\ref{prop:d2finite}(ii)} fails if the generalized twice differentiability therein is replaced by merely twice epi-differentiability.
\end{Example}
    
\begin{proof} Consider the function $f: \R \to \R$ defined by $f(x) := x^2 \mathrm{sgn}(x)$ with $\nabla f(x)=|x|$ for all $x\in\R$, which is of class $\mathcal{C}^{1,1}$ around $\bar{x} = 0$. To check that $f$ is twice epi-differentiable at $0$ for $0$, we have
$$
\Delta_t^2 f(0|0)(w) = \frac{t^2w^2 \sgn(tw) - 0 - t\cdot 0\cdot w}{\frac{1}{2}t^2} = 2w^2 \sgn(w)\;\mbox{ for all }\;t>0\;\mbox{ and }\;w\in\R^n.
$$
Observing that the function $\Delta_t^2 f(0|0)(w)$ is continuous everywhere yields $\Delta_t^2 f(0|0)(w') \to \Delta_t^2 f(0|0)(w)$ as $w' \to w$. Apply Proposition~\ref{prop:epichar} to get the expressions
$$
\Big[\eliminf\limits_{t \downarrow 0} \Delta_t^2 f(0|0)\Big](w) = \Big[\elimsup\limits_{t \downarrow 0} \Delta_t^2 f(0|0)\Big](w) = 2 w^2 \sgn(w),\quad w\in\R.
$$
Thus $\Delta_t^2 f(0|0)(w)$ epigraphically converges as $t\dn 0$ to the function $d^2 f(0|0)(w) = 2 w^2 \sgn (w),\;w \in \R$, which justifies the twice epi-differentiability of $f$ at $0$ for $0$. On the other hand, $d^2 f(0|0)$ is not a generalized quadratic form on $\R$ because any generalized quadratic form $q$ on $\R$ must either be $q(w) = aw^2$ for all $w \in \R$ with $a \in \R$, or $q = \delta_{\{0\}}$. Therefore, $f$ is not generalized twice differentiable at $0$ for $0$.
\end{proof}

The next proposition establishes the {\em preservation} of generalized twice differentiability for extended-real-valued functions under twice differentiable additions.

\begin{Proposition}\label{prop:sumC2gendif}  Let $g: \R^n \to \overline{\R}$ be l.s.c.\ with $\bar{v} \in \partial g(\bar{x})$, and let $f: \R^n \to \overline{\R}$ be strictly differentiable at $\bar{x} \in \R^n$ and twice differentiable at this point in the classical senses. The following assertions hold:
\begin{itemize}
\item[\rm\textbf{(i)}] For all $w\in\R^n$, we have the equality
$$
d^2 (f+g)(\bar{x}|\nabla f(\bar{x}) + \bar{v})(w) = \la w, \nabla^2 f(\bar{x})w \ra + d^2 g(\bar{x}|\bar{v})(w). 
$$
\item[\rm\textbf{(ii)}] If $g$ is generalized twice differentiable at $\bar{x}$ for $\bar{v}$, then the summation function $f + g$ is generalized twice differentiable at $\bar{x}$ for $\nabla f(\bar{x}) + \bar{v}$.
\end{itemize}
\end{Proposition}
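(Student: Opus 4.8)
The plan is to reduce both assertions to the exact additive splitting of second-order difference quotients, together with the fact that the quotients of the smooth summand converge \emph{continuously} to a quadratic form. For part \textbf{(i)}, I would start from the elementary identity
$$\Delta_t^2(f+g)(\bar{x}|\nabla f(\bar{x})+\bar{v})(w)=\Delta_t^2 f(\bar{x}|\nabla f(\bar{x}))(w)+\Delta_t^2 g(\bar{x}|\bar{v})(w),\qquad w\in\R^n,\ t>0,$$
which carries no $\infty-\infty$ ambiguity because $f$ is finite near $\bar{x}$. Next I would record that classical twice differentiability of $f$ at $\bar{x}$ yields the second-order Taylor expansion $f(\bar{x}+h)=f(\bar{x})+\la\nabla f(\bar{x}),h\ra+\tfrac12\la h,\nabla^2 f(\bar{x})h\ra+o(\|h\|^2)$ (a short mean-value argument, or \cite[Corollary~13.42]{Rockafellar98} combined with \cite[Proposition~13.8]{Rockafellar98}), and deduce that $\Delta_t^2 f(\bar{x}|\nabla f(\bar{x}))(w')\to\la w,\nabla^2 f(\bar{x})w\ra$ whenever $t\downarrow 0$ and $w'\to w$; that is, the $f$-quotients converge continuously to $q_{\nabla^2 f(\bar{x})}$. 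With these two ingredients I would evaluate the $\eliminf$ in \eqref{eq:d2} for $f+g$ directly: for the lower bound, take sequences $t_k\downarrow 0$, $w_k\to w$ approaching the liminf defining $d^2(f+g)(\bar{x}|\nabla f(\bar{x})+\bar{v})(w)$, apply the splitting, send the $f$-term to $\la w,\nabla^2 f(\bar{x})w\ra$, and bound the residual $g$-term from below by $d^2 g(\bar{x}|\bar{v})(w)$ via \eqref{eq:d2}; for the upper bound, take $t_k\downarrow 0$, $w_k\to w$ realizing the liminf defining $d^2 g(\bar{x}|\bar{v})(w)$ and add back the convergent $f$-term. This gives the claimed equality, with routine attention to the values $\pm\infty$ of $d^2 g$.

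For part \textbf{(ii)}, I would first note that strict differentiability of $f$ at $\bar{x}$ supplies the subdifferential sum rule $\partial(f+g)(\bar{x})=\nabla f(\bar{x})+\partial g(\bar{x})$ (see, e.g., \cite[Proposition~1.107]{Mordukhovich06}), so that $\nabla f(\bar{x})+\bar{v}\in\partial(f+g)(\bar{x})$ and Definition~\ref{defi:quaddiff} is meaningful here. Assuming $g$ generalized twice differentiable at $\bar{x}$ for $\bar{v}$, its twice epi-differentiability upgrades the argument of \textbf{(i)}: for each $w$ there are sequences $t_k\downarrow 0$, $w_k\to w$ with $\Delta_{t_k}^2 g(\bar{x}|\bar{v})(w_k)\to d^2 g(\bar{x}|\bar{v})(w)$ (Proposition~\ref{prop:epichar}), and along them the splitting plus continuous convergence of the $f$-quotients give $\Delta_{t_k}^2(f+g)(\bar{x}|\nabla f(\bar{x})+\bar{v})(w_k)\to\la w,\nabla^2 f(\bar{x})w\ra+d^2 g(\bar{x}|\bar{v})(w)$; combined with the lower bound from \textbf{(i)}, this yields twice epi-differentiability of $f+g$ at $\bar{x}$ for $\nabla f(\bar{x})+\bar{v}$ with the formula of \textbf{(i)} (recovering, in this special case, the standard fact that adding a continuously convergent sequence preserves epi-limits). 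Finally, writing $d^2 g(\bar{x}|\bar{v})=\tfrac12 q_B+\delta_L$ with $B$ symmetric and $L$ a linear subspace, part \textbf{(i)} gives $d^2(f+g)(\bar{x}|\nabla f(\bar{x})+\bar{v})(w)=\la w,\nabla^2 f(\bar{x})w\ra+\tfrac12\la w,Bw\ra+\delta_L(w)=\tfrac12\la w,(2\nabla^2 f(\bar{x})+B)w\ra+\delta_L(w)$, a generalized quadratic form with the symmetric matrix $2\nabla^2 f(\bar{x})+B$ (only the symmetric part of $\nabla^2 f(\bar{x})$ matters) and the subspace $L$; hence $f+g$ is generalized twice differentiable at $\bar{x}$ for $\nabla f(\bar{x})+\bar{v}$.

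I expect the only genuinely non-routine step to be the continuous convergence $\Delta_t^2 f(\bar{x}|\nabla f(\bar{x}))\to q_{\nabla^2 f(\bar{x})}$: one has to extract from differentiability of $\nabla f$ at the single point $\bar{x}$ a remainder estimate uniform over bounded sets of directions, which is precisely what the second-order Taylor expansion supplies. Everything downstream---the additive splitting, the liminf bookkeeping, and the recombination into a generalized quadratic form---is mechanical, modulo tracking the possible infinite values of $d^2 g$ in part \textbf{(i)}.
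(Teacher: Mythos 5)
Your proposal is correct and follows essentially the same route as the paper: split the second-order difference quotients additively, use the continuous convergence $\Delta_t^2 f(\bar{x}|\nabla f(\bar{x}))\xrightarrow{c}q_{\nabla^2 f(\bar{x})}$ coming from classical twice differentiability to separate the liminf in part \textbf{(i)}, and combine epi-convergence of the $g$-quotients with this continuous convergence to get twice epi-differentiability of $f+g$ in part \textbf{(ii)}. The only (harmless) difference is that you re-derive by hand, via Proposition~\ref{prop:epichar}, the sum rule for an epi-convergent plus a continuously convergent family, which the paper simply cites as \cite[Theorem~7.46(b)]{Rockafellar98}.
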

\begin{proof} \textbf{(i)} Since $f$ is strictly differentiable at $\bar{x}$, it follows that $\nabla f(\bar{x}) + \bar{v} \in \partial (f + g)(\bar{x})$ by \cite[Proposition~1.107(ii)]{Mordukhovich06}. Since $f$ is twice differentiable at $\bar{x}$, we have
$$
\begin{aligned}
d^2 (f+g)(\bar{x}|\nabla f(\bar{x}) + \bar{v})(w)  &= \liminf\limits_{\substack{t \downarrow 0\\ w' \to w}} \frac{(f+g)(\bar{x} + tw') - (f+g)(\bar{x}) - t \la \nabla f(\bar{x}) + \bar{v}, w' \ra}{\frac{1}{2}t^2}\\
&= \liminf\limits_{\substack{t \downarrow 0\\ w' \to w}} \left[ \frac{f(\bar{x} + tw') - f(\bar{x}) - t \la \nabla f(\bar{x}), w' \ra}{\frac{1}{2}t^2} + \frac{g(\bar{x} + tw') - g(\bar{x}) - t \la \bar{v}, w' \ra}{\frac{1}{2}t^2} \right]\\
&= \lim\limits_{\substack{t \downarrow 0\\ w' \to w}}\frac{f(\bar{x} + tw') - f(\bar{x}) - t \la \nabla f(\bar{x}), w' \ra}{\frac{1}{2}t^2} + \liminf\limits_{\substack{t \downarrow 0\\ w' \to w}} \frac{g(\bar{x} + tw') - g(\bar{x}) - t \la \bar{v}, w' \ra}{\frac{1}{2}t^2}\\
&= \la w, \nabla^2 f(\bar{x})w \ra + d^2 g(\bar{x}|\bar{v})(w),\quad w\in\R^n.
\end{aligned} 
$$

\textbf{(ii)} By the generalized twice differentiability of $g$ at $\bar{x}$ for $\bar{v}$, there exist a symmetric matrix $A$ and a subspace $L$ of $\R^n$ such that $d^2g(\bar{x}|\bar{v}) = q_A + \delta_L$. It follows from \textbf{(i)} that
\begin{equation}\label{eq:propsumd2}
d^2(f+g)(\bar{x}|\nabla f(\bar{x}) + \bar{v})(w) = \la w, (\nabla^2 f(\bar{x}) + A)w \ra + \delta_L(w),\quad w\in\R^n.
\end{equation}
Since $g$ is  properly twice epi-differentiable at $\bar{x}$ for $\bar{v}$ and $f$ is twice differentiable at $\bar{x}$, we have that $\Delta_t^2 g(\bar{x}|\bar{v}) \xrightarrow{e} d^2 g(\bar{x}|\bar{v})$ and $\Delta_t^2 f(\bar{x}|\nabla f(\bar{x})) \xrightarrow{c} q_{\nabla^2 f(\bar{x})}$ as $t \downarrow 0$. Applying 
\cite[Theorem~7.46(b)]{Rockafellar98} together with \eqref{eq:propsumd2} gives us as $t\dn0$ that
$$
\Delta_t^2 (f + g)(\bar{x}|\nabla f(\bar{x}) + \bar{v}) =  \Delta_t^2 f(\bar{x}|\nabla f(\bar{x})) + \Delta_t^2 g(\bar{x}|\bar{v}) \xrightarrow{e} d^2 (f+g)(\bar{x}|\nabla f(\bar{x}) + \bar{v})
$$
thus verifying the twice epi-differentiability  of $f+g$ at $\bar{x}$ for $\nabla f(\bar{x}) + \bar{v}$. Therefore, $f + g$ is generalized twice differentiable at $\bar{x}$ for $\nabla f(\bar{x}) + \bar{v}$ as claimed.
\end{proof}

\section{Generalized Twice Differentiability via Moreau Envelopes}\label{gen-moreau}

The main result of this section establishes the {\em equivalence} between {\em generalized twice differentiability} for the broad class of (extended-real-valued)  prox-regular and prox-bounded functions and its {\em classical twice differentiability} counterpart for the associated {\em Moreau envelopes}. To achieve this goal, we present several auxiliary statements of their own interest. The first lemma is extracted from \cite[Proposition~13.37]{Rockafellar98}.

\begin{Lemma}\label{prop:prox-mor-pr} Let $f:\R^n \to \overline{\R}$ be prox-bounded on $\R^n$ and $r$-level prox-regular at $\bar{x}$ for $\bar{v} \in\partial f(\bar{x})$ with the corresponding radius $\varepsilon > 0$. Then for all $\lambda \in (0,1/r)$, there exists a neighborhood of $\bar{x} + \lambda\bar{v}$ on which:
\begin{itemize}
\item[\rm\textbf{(i)}] The proximal mappings $P_\lambda f$ is single-valued and Lipschitz continuous with $P_\lambda f(\bar{x} + \lambda\bar{v}) = \bar{x} + \lambda\bar{v}$.

\item[\rm\textbf{(ii)}] The Moreau envelope $e_\lambda f$ is of class $\mathcal{C}^{1,1}$ and such that
\begin{equation}\label{eq:gradenv}
\nabla e_\lambda f = \lambda^{-1}[I - P_\lambda f] = [\lambda I + T_\ve^{-1}]^{-1},
\end{equation}
where $T_\ve$ is the $f$-attentive $\varepsilon$-localization of $\partial f$ around $(\bar{x},\bar{v})$.
\end{itemize}
Moreover, the set-valued mapping $T_\ve$ in {\rm(ii)} can be chosen so that the set $U_{\lambda} := \rge (I + \lambda T_\varepsilon)$ serves for all $\lambda > 0$ sufficiently small as a neighborhood of $\bar{x} + \lambda\bar{v}$ on which these properties hold.
\end{Lemma}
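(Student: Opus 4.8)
The statement to be justified is Lemma~\ref{prop:prox-mor-pr}, which is stated as being extracted from \cite[Proposition~13.37]{Rockafellar98}. Since the paper presents it as a known result, the natural approach is to (i) recall the relevant pieces of \cite[Proposition~13.37]{Rockafellar98} and the surrounding machinery in \cite{Rockafellar98}, and (ii) bridge the gap between Rockafellar's statement, which is phrased in terms of an $f$-attentive localization $T_\ve$ of $\partial f$, and the precise conclusions \textbf{(i)}--\textbf{(ii)} here, including the resolvent formula \eqref{eq:gradenv} and the claim that $U_\lambda := \rge(I + \lambda T_\ve)$ works as the common neighborhood for all small $\lambda$.

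First I would invoke the $r$-level prox-regularity of $f$ at $\bar x$ for $\bar v$ with radius $\varepsilon$: by Definition~\ref{defi:varconv} (with $s = -r \le 0$), $f$ agrees on the $f$-attentive $\varepsilon$-localization $T_\ve$ of $\partial f$ around $(\bar x,\bar v)$ with an $r$-weakly convex (equivalently, $(-r)$-convex) function $\widehat f$, and $\widehat f \le f$ near $\bar x$. For $\lambda \in (0,1/r)$ the shifted function $\widehat f + \tfrac1{2\lambda}\|\cdot - x\|^2$ is strongly convex (modulus $\lambda^{-1} - r > 0$), so $P_\lambda \widehat f$ is single-valued, Lipschitz, and $e_\lambda \widehat f$ is $\mathcal C^{1,1}$ with $\nabla e_\lambda\widehat f = \lambda^{-1}(I - P_\lambda\widehat f) = (\lambda I + (\partial\widehat f)^{-1})^{-1}$ — these are the standard Moreau-envelope identities for convex functions, the last being the resolvent/Yosida formula. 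The content then is a \emph{localization argument}: one shows that near $\bar x + \lambda\bar v$ the proximal mapping of $f$ computed over a small ball coincides with that of $\widehat f$, because the prox-bounded minorization forces the minimizer defining $P_\lambda f(x)$ to lie in the region where $f = \widehat f$ and where the relevant subgradients fall inside the $\varepsilon$-localization. This uses prox-boundedness to rule out minimizing sequences escaping to infinity, l.s.c.\ to secure attainment, and the radius-$\varepsilon$ condition together with $P_\lambda\widehat f(\bar x + \lambda\bar v) = \bar x$ (since $\bar v \in \partial\widehat f(\bar x)$ gives $\bar x + \lambda\bar v \in (I + \lambda\partial\widehat f)(\bar x)$) plus continuity to get a whole neighborhood on which the identification is valid. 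On that neighborhood $P_\lambda f = P_\lambda\widehat f$, hence \textbf{(i)}, and \eqref{eq:gradenv} follows by substituting $(\partial\widehat f)^{-1}$ restricted to the localization, which is exactly $T_\ve^{-1}$.

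For the final ``Moreover'' clause, the point is to exhibit a \emph{single} $T_\ve$ and describe the neighborhood \emph{uniformly in $\lambda$}. Here I would use the resolvent picture: since $\widehat f$ is $(-r)$-convex, $(I + \lambda\partial\widehat f)^{-1}$ is globally single-valued and Lipschitz for $\lambda \in (0,1/r)$, and $\rge(I + \lambda\partial\widehat f) = \R^n$; restricting $\partial\widehat f$ to $\gph T_\ve$, the set $\rge(I + \lambda T_\ve)$ is the image of a small neighborhood of $(\bar x,\bar v)$ in $\gph\partial f$ under the map $(x,v) \mapsto x + \lambda v$. One checks that this set contains a ball around $\bar x + \lambda\bar v$ whose radius can be taken bounded below uniformly for small $\lambda$ (because for $\lambda \to 0$ the map $(x,v)\mapsto x+\lambda v$ is a small perturbation of the projection $(x,v)\mapsto x$, and $\bar x$ is interior to the $x$-projection of $\gph T_\ve$), and that $e_\lambda f$ and $P_\lambda f$ have the asserted properties precisely on $U_\lambda = \rge(I + \lambda T_\ve)$ by the resolvent identity. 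I would also need to confirm, via a shrinking of $\varepsilon$ independent of $\lambda$, that the minimizer-localization in the previous paragraph holds on all of $U_\lambda$ simultaneously.

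**Main obstacle.** The routine convex-analysis facts (single-valuedness, Lipschitzness, the $\mathcal C^{1,1}$ envelope, the resolvent formula) are standard; the delicate step is the \emph{uniform-in-$\lambda$ localization} --- proving that one fixed $f$-attentive $\varepsilon$-localization $T_\ve$ serves for all sufficiently small $\lambda > 0$, and that $\rge(I + \lambda T_\ve)$ is genuinely a neighborhood of $\bar x + \lambda\bar v$ (not merely that some neighborhood exists for each $\lambda$). This requires controlling how the ``escape to the bad region'' estimates degrade as $\lambda \downarrow 0$ and as points move within $B(\bar x+\lambda\bar v,\cdot)$, which is exactly the technical heart of \cite[Proposition~13.37]{Rockafellar98} and where I would spend the most care; since the paper cites that result, the proof here can largely defer to it while spelling out the identification $(\partial\widehat f)^{-1}|_{\gph T_\ve} = T_\ve^{-1}$ that produces the second equality in \eqref{eq:gradenv}.
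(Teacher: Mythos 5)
The paper offers no proof of this lemma at all: it is imported verbatim (up to a typo) from \cite[Proposition~13.37]{Rockafellar98}, so your decision to defer to that citation is exactly what the paper does, and your proposal is acceptable on that basis. Your accompanying sketch, however, reconstructs the result by a somewhat different route than Rockafellar--Wets: you pass through the globally $(-r)$-convex representative $\widehat{f}$ of Definition~\ref{defi:varconv} and then localize $P_\lambda\widehat f=P_\lambda f$, whereas the original proof works directly with the $f$-attentive localization $T_\ve$, whose hypomonotonicity makes $T_\ve+rI$ monotone, and then applies resolvent results for monotone mappings to get single-valuedness, Lipschitz continuity, and the formula $\nabla e_\lambda f=\lambda^{-1}[I-P_\lambda f]=[\lambda I+T_\ve^{-1}]^{-1}$ together with the uniform choice $U_\lambda=\rge(I+\lambda T_\ve)$. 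Both routes are viable; the monotone-resolvent route gives the ``Moreover'' clause and the second equality in \eqref{eq:gradenv} more directly, while your route needs the extra care you yourself flag, namely that $f$ and $\widehat f$ agree only at common graph points of the localization (not on a neighborhood), so identifying $P_\lambda f$ with $P_\lambda\widehat f$ requires the first-order/attentiveness argument for the prox points on both sides, not just $\widehat f\le f$. Note also that you correctly use $P_\lambda f(\bar x+\lambda\bar v)=\bar x$, which silently fixes the paper's misprint $P_\lambda f(\bar x+\lambda\bar v)=\bar x+\lambda\bar v$.
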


The next technical observation is useful in what follows.

\begin{Lemma}\label{prop:shear} For any set-valued mapping $F: \R^n \rightrightarrows \R^n$ and any $\lambda \in \R$, we have
$$
(u,v) \in \gph [\lambda I + F^{-1}]^{-1} \Longleftrightarrow (u - \lambda v, v) \in \gph F,\;\mbox{ i.e., }\;
$$
$$
\gph F = A_{\lambda} [\gph [\lambda I + F^{-1}]^{-1}]\;\mbox{ for all }\;x, y \in \R^n,
$$
where $A_{\lambda} : \R^n \times \R^n \to \R^n \times \R^n$ is defined by 
$$
A_{\lambda}(x,y) := (x - \lambda y, y),
$$ 
which is an invertible linear mapping with $A_{\lambda}^{-1}(x,y) = (x + \lambda y, y)$.
\end{Lemma}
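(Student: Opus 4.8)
The plan is to prove Lemma~\ref{prop:shear} by a direct computation that unwinds the definitions of inverse and composition for set-valued mappings, since the statement is purely algebraic and involves no limiting arguments. First I would establish the equivalence $(u,v) \in \gph [\lambda I + F^{-1}]^{-1} \Longleftrightarrow (u - \lambda v, v) \in \gph F$. Recall that for any multifunction $G$ one has $(u,v) \in \gph G^{-1}$ if and only if $(v,u) \in \gph G$, i.e.\ $u \in G(v)$. Applying this with $G = \lambda I + F^{-1}$ gives $(u,v) \in \gph [\lambda I + F^{-1}]^{-1}$ iff $u \in \lambda v + F^{-1}(v)$, which says exactly that $u - \lambda v \in F^{-1}(v)$, and by the inverse rule once more this is equivalent to $v \in F(u - \lambda v)$, that is, $(u - \lambda v, v) \in \gph F$.

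Next I would reformulate this equivalence in terms of the linear map $A_\lambda(x,y) = (x - \lambda y, y)$. The equivalence just proved says that $(u,v) \in \gph[\lambda I + F^{-1}]^{-1}$ iff $A_\lambda(u,v) \in \gph F$, i.e.\ $A_\lambda$ maps $\gph[\lambda I + F^{-1}]^{-1}$ bijectively onto $\gph F$; hence $\gph F = A_\lambda\big[\gph[\lambda I + F^{-1}]^{-1}\big]$. Finally I would record that $A_\lambda$ is linear (immediate from its formula) and invertible, checking that the proposed inverse $A_\lambda^{-1}(x,y) = (x + \lambda y, y)$ indeed satisfies $A_\lambda(A_\lambda^{-1}(x,y)) = (x + \lambda y - \lambda y, y) = (x,y)$ and symmetrically $A_\lambda^{-1}(A_\lambda(x,y)) = (x,y)$.

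There is no real obstacle here: the whole content is bookkeeping with the identities $\gph G^{-1} = \{(v,u) : (u,v) \in \gph G\}$ and $\gph(\lambda I + H) = \{(x, \lambda x + y) : (x,y) \in \gph H\}$. The only point requiring a moment of care is keeping the roles of the two coordinates straight when composing $\lambda I$ with $F^{-1}$ and then inverting the sum, so I would write each implication as an explicit chain of "iff"s rather than asserting it. I expect the proof to be four or five lines.
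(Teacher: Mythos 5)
Your proposal is correct and follows essentially the same route as the paper's proof: a direct chain of equivalences unwinding the definition of the inverse and of $\lambda I + F^{-1}$, followed by the (immediate) reformulation via $A_\lambda$ and the check of its inverse. The only difference is notational — you phrase the middle step with $u \in \lambda v + F^{-1}(v)$ while the paper writes the same equivalences in terms of graph pairs — which does not change the argument.
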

\begin{proof} Based on the definitions, we have the equivalences
$$
\begin{aligned}
(u,v) \in \gph [\lambda I + F^{-1}]^{-1} &\Longleftrightarrow (v,u) \in \gph [\lambda I + F^{-1}] \\
&\Longleftrightarrow (v,u - \lambda v) \in \gph F^{-1} \\
&\Longleftrightarrow (u - \lambda v,v) \in \gph F, 
\end{aligned} 
$$
which readily yield the other statements of the lemma.
\end{proof}

Yet another lemma provides a desired ingredient for subsequent limiting procedures. 

\begin{Lemma}\label{lem:zkvkxk} Let $f: \R^n \to \overline{\R}$ be prox-regular at $\bar{x}$ for $\bar{v} \in \partial f(\bar{x})$. Take $\lambda > 0$ to be so small that all the assertions in Lemma~{\rm\ref{prop:prox-mor-pr}} hold; in particular, $e_{\lambda}f$ is of class $\mathcal{C}^{1,1}$ around $\bar{z}:= \bar{x} + \lambda \bar{v}$. For any sequence $z_k \to \bar{z}$, define the vectors
\begin{equation}\label{eq:vkxk}
 v_k := \nabla  e_{\lambda} f(z_k), \quad x_k := z_k - \lambda v_k\;\mbox{ as }\;k\in\N,  
\end{equation}
Then we have the sequence of subgradients $v_k \in \partial f(x_k)$ such that
$$
(x_k,v_k) \to (\bar{x},\bar{v}), \quad f(x_k) \to f(\bar{x})\;\mbox{ as }\;k\to\infty.
$$
\end{Lemma}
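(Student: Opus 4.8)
The plan is to recognize that the vectors in \eqref{eq:vkxk} are exactly the proximal point and the envelope gradient at $z_k$, and then to transfer all required convergences from the good behavior of $e_\lambda f$ and $P_\lambda f$ near $\bar{z}$ supplied by Lemma~\ref{prop:prox-mor-pr}. First I would observe that, since $z_k \to \bar{z}$, for all large $k$ the point $z_k$ lies in the neighborhood of $\bar{z}$ on which \eqref{eq:gradenv} holds; combining $v_k = \nabla e_\lambda f(z_k) = \lambda^{-1}[I - P_\lambda f](z_k)$ with $x_k = z_k - \lambda v_k$ then gives the identification $x_k = P_\lambda f(z_k)$ for all such $k$.

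To get the inclusion $v_k \in \partial f(x_k)$, I would apply Lemma~\ref{prop:shear} with $F = T_\ve$, the $f$-attentive $\ve$-localization of $\partial f$ around $(\bar{x},\bar{v})$: by \eqref{eq:gradenv} we have $(z_k,v_k) \in \gph [\lambda I + T_\ve^{-1}]^{-1}$, which by Lemma~\ref{prop:shear} is equivalent to $(z_k - \lambda v_k, v_k) = (x_k,v_k) \in \gph T_\ve \subset \gph \partial f$, so $v_k \in \partial f(x_k)$. For $(x_k,v_k) \to (\bar{x},\bar{v})$, I would first note that the center $(\bar{x},\bar{v})$ always belongs to $\gph T_\ve$, so Lemma~\ref{prop:shear} together with \eqref{eq:gradenv} yields $\nabla e_\lambda f(\bar{z}) = \bar{v}$ and hence $P_\lambda f(\bar{z}) = \bar{z} - \lambda\bar{v} = \bar{x}$; continuity of $\nabla e_\lambda f$ and $P_\lambda f$ at $\bar{z}$ (Lemma~\ref{prop:prox-mor-pr}) then gives $v_k = \nabla e_\lambda f(z_k) \to \bar{v}$ and $x_k = P_\lambda f(z_k) \to \bar{x}$.

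The only genuinely delicate point — and the step I expect to be the main obstacle — is $f(x_k) \to f(\bar{x})$, since lower semicontinuity of $f$ by itself delivers merely $\liminf_k f(x_k) \ge f(\bar{x})$. Here I would exploit that $x_k = P_\lambda f(z_k)$ is precisely the minimizer defining the envelope value, so that $e_\lambda f(z_k) = f(x_k) + \frac{1}{2\lambda}\|x_k - z_k\|^2$, that is, $f(x_k) = e_\lambda f(z_k) - \frac{1}{2\lambda}\|x_k - z_k\|^2$. Passing to the limit, using continuity of $e_\lambda f$ at $\bar{z}$ together with $x_k \to \bar{x}$ and $z_k \to \bar{z}$, gives $f(x_k) \to e_\lambda f(\bar{z}) - \frac{1}{2\lambda}\|\bar{x} - \bar{z}\|^2$. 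Finally, since $\bar{x} = P_\lambda f(\bar{z})$ is the minimizer at $\bar{z}$, we have $e_\lambda f(\bar{z}) = f(\bar{x}) + \frac{1}{2\lambda}\|\bar{x} - \bar{z}\|^2$, so the limit on the right collapses to $f(\bar{x})$, which completes the argument.
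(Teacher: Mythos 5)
Your proposal is correct, and its first half (using \eqref{eq:gradenv} together with Lemma~\ref{prop:shear} to place $(x_k,v_k)=(z_k-\lambda v_k,v_k)$ in $\gph T_\ve\subset\gph\partial f$) is essentially the same unwinding of graphs that the paper performs. Where you genuinely diverge is in how the two convergences are obtained. The paper fixes an arbitrary $\ve\in(0,\ve_0)$, notes that the representation $\nabla e_\lambda f=[\lambda I+T_\ve^{-1}]^{-1}$ holds on the neighborhood $\rge(I+\lambda T_\ve)$ of $\bar z$, and reads off from the $f$-attentive localization that eventually $x_k\in B(\bar x,\ve)$, $v_k\in B(\bar v,\ve)$ and $f(x_k)<f(\bar x)+\ve$; letting $\ve\downarrow 0$ gives $(x_k,v_k)\to(\bar x,\bar v)$ and $\limsup_k f(x_k)\le f(\bar x)$, with the matching lower bound coming (implicitly) from lower semicontinuity of $f$. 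You instead work with a single localization, get $\nabla e_\lambda f(\bar z)=\bar v$ and $P_\lambda f(\bar z)=\bar x$, and then use continuity of $\nabla e_\lambda f$ for $(x_k,v_k)\to(\bar x,\bar v)$ and the exact proximal identity $f(x_k)=e_\lambda f(z_k)-\frac{1}{2\lambda}\|x_k-z_k\|^2$, valid because $x_k=P_\lambda f(z_k)$ is the attained minimizer, together with continuity of $e_\lambda f$ at $\bar z$, to obtain $f(x_k)\to f(\bar x)$ as a genuine limit. Your route buys a cleaner value-convergence argument that needs neither the family of $\ve$-localizations nor an appeal to lower semicontinuity, at the price of leaning on the single-valuedness and attainment of $P_\lambda f$ near $\bar z$ (and the prox-boundedness hidden in the hypotheses of Lemma~\ref{prop:prox-mor-pr}), which the paper's argument uses only through the graph representation; as a side benefit, your derivation of $P_\lambda f(\bar z)=\bar x$ also corrects the evident typo in Lemma~\ref{prop:prox-mor-pr}(i) rather than quoting it.
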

\begin{proof} Assuming that $f: \R^n \to \overline{\R}$ is prox-regular at $\bar{x}$ for $\bar{v} \in \partial f(\bar{x})$ with the corresponding radius $\varepsilon_0 > 0$, we fix any $\varepsilon \in (0,\varepsilon_0)$ and deduce from  Lemma~\ref{prop:prox-mor-pr} that the representations in \eqref{eq:gradenv} hold on the neighborhood $U_{\lambda}^\varepsilon := \rge (I + \lambda T_{\varepsilon})$ of $\bar{z}$. Take a sequence $z_k \to \bar{z}$ and construct $v_k, x_k$ as in \eqref{eq:vkxk}. We get $z_k \in U_{\lambda}^\varepsilon$ for all $k$ sufficiently large and thus deduce from \eqref{eq:gradenv} that
$$
v_k = [\lambda I + T_\varepsilon^{-1}]^{-1}(z_k),
$$
which brings us to the equivalences
$$
(v_k,z_k) \in \gph [\lambda I + T_\varepsilon^{-1}] \Longleftrightarrow (v_k,x_k) \in \gph T_\varepsilon^{-1} \Longleftrightarrow (x_k,v_k) \in \gph T_\varepsilon.
$$
Therefore, it holds for all $k$ sufficiently large that 
$$
v_k \in \partial f(x_k), \quad x_k \in B(\bar{x},\varepsilon), \quad v_k \in B(\bar{v},\varepsilon), \quad f(x_k) < f(\bar{x}) + \varepsilon.
$$
Since $\varepsilon > 0$ was chosen arbitrarily small, we arrive at the claim conclusion.
\end{proof}

The following proposition, extending with the {\em modulus interplay} the corresponding statement of \cite[Exercise~13.45]{Rockafellar98}, provides the equivalence between twice epi-differentiability of a prox-regular function and the associated Moreau envelope. 

\begin{Proposition}\label{prop:twice-epi} Let $f: \R^n \to \overline{\R}$ be prox-bounded on $\R^n$ and is $r$-level prox-regular at $\bar{x}$ for $\bar{v} \in \partial f(\bar{x})$. Then for all $\lambda \in (0,1/r)$, the following assertions are equivalent:
\begin{itemize}
\item[\rm\textbf{(i)}] $f$ is twice epi-differentiable at $\bar{x}$ for $\bar{v}$.

\item[\rm\textbf{(ii)}] The Moreau envelope $e_{\lambda} f$ is twice epi-differentiable at $\bar{x} + \lambda \bar{v}$ for $\bar{v}$.
\end{itemize}
Moreover, we have under the assumptions above that
\begin{equation}\label{eq:envd^2}
e_{\lambda}\Big[ \frac{1}{2}d^2 f (\bar{x}|\bar{v})\Big] =  d^2\Big[ \frac{1}{2} e_{\lambda} f\Big]\Big(\bar{x} + \lambda\bar{v}\Big|\dfrac{1}{2}\bar{v}\Big).
\end{equation}
\end{Proposition}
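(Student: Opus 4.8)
The plan is to exploit the fact that the Moreau envelope is an infimal convolution, $e_\lambda f = f \,\Box\, (\tfrac{1}{2\lambda}\|\cdot\|^2)$, and that second-order difference quotients interact well with this operation. First I would set up the notation: write $\bar z := \bar x + \lambda\bar v$, and recall from Lemma~\ref{prop:prox-mor-pr} that for $\lambda\in(0,1/r)$ the envelope $e_\lambda f$ is $\mathcal{C}^{1,1}$ around $\bar z$ with $\nabla e_\lambda f(\bar z)=\bar v$ and $P_\lambda f(\bar z)=\bar x$. The key computational identity I would aim to establish is a pointwise relation between the second-order difference quotient of $e_\lambda f$ at $\bar z$ for $\tfrac12\bar v$ and the $\lambda$-Moreau envelope of the second-order difference quotient of $f$ at $\bar x$ for $\bar v$. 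Concretely, using the definition of the Moreau envelope together with the fact that the infimum defining $e_\lambda f(\bar z + t w')$ is attained near $\bar x + t(\cdot)$ (by the single-valued Lipschitz behavior of $P_\lambda f$ in a neighborhood), one should get something of the form
$$
\Delta_t^2\Big[\tfrac{1}{2}e_\lambda f\Big](\bar z\,|\,\tfrac12\bar v)(w) \;=\; e_\lambda\Big[\tfrac12\Delta_t^2 f(\bar x\,|\,\bar v)\Big](w) \quad\text{(up to the correct scaling of the envelope parameter)},
$$
where on the right the Moreau envelope is taken in the $w$-variable. The reason is that shifting the center by $t w'$ inside the infimal convolution and rescaling by $\tfrac{1}{2}t^2$ converts the quadratic penalty $\tfrac{1}{2\lambda}\|\cdot\|^2$ into itself, leaving the difference quotient of $f$ under the inf-convolution.

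Once this exact identity at the level of difference quotients is in hand, the equivalence of twice epi-differentiability follows from the continuity of the Moreau envelope operation with respect to epi-convergence: I would invoke the fact (Attouch's theorem, or \cite[Theorem~7.37]{Rockafellar98}) that $g_k \xrightarrow{e} g$ with a uniform prox-boundedness bound implies $e_\lambda g_k \xrightarrow{e} e_\lambda g$, and conversely that the epi-convergence of the envelopes together with the recovery of $g$ from $e_\lambda g$ (for prox-regular/prox-bounded data) gives back $g_k \xrightarrow{e} g$. Applying this to $g_k = \tfrac12\Delta_{t_k}^2 f(\bar x|\bar v)$ as $t_k\downarrow 0$, and using the difference-quotient identity, transfers twice epi-differentiability between $f$ at $(\bar x,\bar v)$ and $e_\lambda f$ at $(\bar z,\tfrac12\bar v)$; passing to the epi-limit in the identity then yields exactly formula \eqref{eq:envd^2}. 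Here I would also use Lemma~\ref{lem:zkvkxk} to match up the primal-dual pairs: the sequences $z_k\to\bar z$ producing the envelope difference quotients correspond, via $v_k=\nabla e_\lambda f(z_k)$, $x_k = z_k-\lambda v_k$, to pairs $(x_k,v_k)\xrightarrow{\gph\partial f}(\bar x,\bar v)$ with $f(x_k)\to f(\bar x)$, so no ``spurious'' subgradient branches interfere.

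The main obstacle I anticipate is establishing the difference-quotient identity \emph{cleanly}, i.e., verifying that the infimum defining $e_\lambda f(\bar z + tw')$ is genuinely localized so that only the prox-regular branch of $f$ (the one agreeing with a weakly convex function on the $f$-attentive localization $T_\varepsilon$) contributes. This requires a careful use of Lemma~\ref{prop:prox-mor-pr}(i): for $t$ small and $w'$ bounded, $\bar z + t w'$ stays in the neighborhood where $P_\lambda f$ is single-valued and Lipschitz, so $P_\lambda f(\bar z + tw') = \bar x + t w' + o(t)$, and the minimizing $u$ in $e_\lambda\big[\tfrac12\Delta_t^2 f(\bar x|\bar v)\big](w)$ stays bounded; one then has to check the error terms vanish in the epi-limit rather than merely pointwise, which is where Proposition~\ref{prop:epichar} and the monotonicity/equi-lower-semicontinuity tools come in. A secondary technical point is handling the scaling of the envelope parameter correctly — tracking how $t^2$ rescaling interacts with the $\tfrac{1}{2\lambda}$ coefficient — but that is bookkeeping rather than a real difficulty. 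I would also double check the edge behavior where $d^2f(\bar x|\bar v)$ takes the value $+\infty$ on a large set (as in the norm example), since then the envelope on the right of \eqref{eq:envd^2} is a genuine regularization of an indicator-type function and one must be sure the inf-convolution is still exact.
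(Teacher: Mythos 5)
The computational core of your plan is correct, and in fact simpler than you fear: writing $\bar z:=\bar x+\lambda\bar v$ and making the \emph{global} change of variables $u=\bar x+tu'$ in the infimum defining $e_\lambda f(\bar z+tw)$, together with the exact value $e_\lambda f(\bar z)=f(\bar x)+\tfrac{\lambda}{2}\|\bar v\|^2$ (equivalently $\bar x\in P_\lambda f(\bar z)$, which is what Lemma~\ref{prop:prox-mor-pr} gives for $\lambda\in(0,1/r)$), yields the \emph{exact} identity $\Delta_t^2\big[\tfrac12 e_\lambda f\big](\bar z|\tfrac12\bar v)=e_\lambda\big[\tfrac12\Delta_t^2 f(\bar x|\bar v)\big]$ with the \emph{same} parameter $\lambda$; no localization of the proximal infimum and no error terms enter, so the obstacle you single out is not the real one. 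With this identity, the implication (i)$\Rightarrow$(ii) and formula \eqref{eq:envd^2} do follow from the Attouch-type result \cite[Theorem~7.37]{Rockafellar98}, since the family $\tfrac12\Delta_t^2 f(\bar x|\bar v)$ is eventually prox-bounded (its $\lambda'$-envelope vanishes at $w=0$ for every $\lambda'\in(0,1/r)$, by the same identity).

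The genuine gap is in (ii)$\Rightarrow$(i). You propose to ``recover $g$ from $e_\lambda g$,'' but a Moreau envelope at a single fixed $\lambda$ does not determine the function: it determines only its $\lambda$-proximal hull (for instance $\delta_{\{-1,1\}}$ and its $\lambda$-proximal hull have identical envelopes), and \cite[Theorem~7.37]{Rockafellar98} recovers epi-convergence from envelope convergence only when the latter holds for \emph{all} $\lambda$ in an interval $(0,\bar\lambda)$, whereas hypothesis (ii) gives convergence of $e_\lambda\big[\tfrac12\Delta_t^2 f(\bar x|\bar v)\big]$ for one fixed $\lambda$ only. Hence distinct subsequential epi-limits of the second-order quotients could share the same $\lambda$-envelope, and nothing in your argument excludes this; you would need the relevant limits to be $\lambda$-proximal, which does not follow from prox-regularity (an $f$-attentive, purely local property), or a different mechanism altogether. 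The paper avoids the issue by a different route: it tilts $f$ to $g=f-\la\bar v,\cdot-\bar x\ra$, uses the Hare--Poliquin formula \cite{hare} for envelopes of tilted functions, and reduces to \cite[Exercise~13.45]{Rockafellar98}, whose underlying argument runs through the proto-differentiability characterization of twice epi-differentiability (Theorem~13.40 there) and the resolvent identity $\nabla e_\lambda f=[\lambda I+T_\ve^{-1}]^{-1}$. If you want to keep your direct approach, the cleanest repair of the converse is exactly that mechanism: transfer proto-differentiability between $\nabla e_\lambda f$ and $T_\ve$ through the invertible linear graph transformation of Lemma~\ref{prop:shear} and Lemma~\ref{lem:gphDGF}, as is done in the proof of Theorem~\ref{prop:gtdtwicediff}, rather than trying to invert the envelope operation.
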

\begin{proof}
Consider the shifted function
$$
g(x) := f(x) - \la \bar{v}, x - \bar{x} \ra,\quad x\in\R^n
$$
and observe by \cite[Exercise~13.35]{Rockafellar98} that $g$ is $r$-level prox-regular at $\bar{x}$ for $0$. Since $f$ is prox-bounded and $g$ differs from $f$ by a linear term, $g$ is also prox-bounded. Deduce now from by \cite[Lemma 2.2]{hare} that
\begin{equation}\label{eq:env_gf}
e_{\lambda} f(\bar{x} + \lambda \bar{v} + x) = e_{\lambda} g(\bar{x} + x) + \frac{\lambda}{2}\|\bar{v}\|^2 + \la \bar{v}, x \ra\;\mbox{ for all }\;\lambda > 0,\;x \in \R^n.
\end{equation}
It follows from the proof of Proposition~\ref{prop:sumC2gendif} that $g$ is twice epi-differentiable at $\bar{x}$ for $0$ with
\begin{equation}\label{eq:d^2gf}
d^2 g(\bar{x}|0) = d^2 f(\bar{x}|\bar{v}).
\end{equation}
Similarly we get by the usage of \eqref{eq:env_gf} that the twice epi-differentiability of $e_\lambda f$ at $\bar{x} + \lambda \bar{v}$ for $\bar{v}$ is equivalent to the twice epi-differentiability of $e_\lambda g$ at $\bar{x}$ for $0$ with the fulfillment of
\begin{equation}\label{eq:d^2gf2}
 d^2\Big[\frac{1}{2}e_{\lambda} f\Big]\Big(\bar{x} + \lambda\bar{v}\Big|\dfrac{1}{2}\bar{v}\Big) =  d^2\Big[\frac{1}{2}e_{\lambda} g\Big](\bar{x}|0).
\end{equation}
Applying \cite[Exercise~13.45]{Rockafellar98} to $g$, and combining this with \eqref{eq:d^2gf} and \eqref{eq:d^2gf2} establishes the equivalence between {\bf(i)} and {\bf(ii)}. The claimed equality \eqref{eq:envd^2} also follows from the above due to
$$
d^2\Big[\frac{1}{2}e_{\lambda} f\Big]\Big(\bar{x} + \lambda\bar{v} \left|\dfrac{1}{2}\bar{v} \right.\Big) =  d^2 \Big[\frac{1}{2}e_{\lambda} g\Big](\bar{x}|0) = e_{\lambda}\Big[\frac{1}{2}d^2 g(\bar{x}|0)\Big] = e_{\lambda} \Big[\frac{1}{2}d^2 f(\bar{x}|\bar{v})\Big],
$$
which therefore completes the proof of the proposition.
\end{proof}

If in addition the second-order subderivative $d^2 f(\bar{x}|\bar{v})$ is a {\em generalized quadratic form}, i.e., $f$ is generalized twice differentiable at $\bar{x}$ for $\bar{v}$, then we establish below much stronger conclusion telling us that the associated Moreau is not merely twice epi-differentiable but {\em twice differentiable} in the classical sense. To proceed, we need to derive three more auxiliary results as follows.

\begin{Lemma}\label{lem:sol} Let $L \ne \{0\}$ be a linear subspace of $\R^n$ with dimension $m\le n$, let $M \in \R^{n \times n}$ be a positive-definite matrix, and let $w \in \R^n$ be a fixed vector. Given an orthogonal basis $\{v_1,\ldots,v_m\}$ of $L$ and an $m\times n$ matrix $B$ with columns $v_i$, $i = 1,\ldots, m$, we claim that the matrix $B^TMB$ is invertible and $B(B^TMB)^{-1}B^T w$ is a unique solution to the system
\begin{equation}\label{eq:system}
\left\{ \begin{array}{l}
x \in L, \\
Mx - w \in L^\perp.  
\end{array} \right.
\end{equation}
\end{Lemma}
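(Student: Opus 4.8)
The plan is to reduce the system \eqref{eq:system} to a linear system on the coordinate space $\R^m$ using the basis matrix $B$, and then exploit positive-definiteness of $M$ to get invertibility. First I would note that any $x \in L$ can be written uniquely as $x = B\xi$ for some $\xi \in \R^m$, since the columns $v_1,\dots,v_m$ of $B$ form a basis of $L$. The condition $Mx - w \in L^\perp$ says $\langle v_i, Mx - w\rangle = 0$ for all $i = 1,\dots,m$, i.e., $B^T(Mx - w) = 0$. Substituting $x = B\xi$ turns this into $B^T M B\,\xi = B^T w$, a square $m\times m$ linear system.

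The key step is showing that $B^T M B$ is invertible. Since $M$ is positive-definite and $B$ has full column rank $m$ (its columns are linearly independent), for any nonzero $\xi \in \R^m$ the vector $B\xi$ is nonzero, hence $\xi^T(B^T M B)\xi = \langle B\xi, M(B\xi)\rangle > 0$; thus $B^T M B$ is itself symmetric positive-definite and in particular invertible. It then follows that the reduced system has the unique solution $\xi = (B^T M B)^{-1}B^T w$, and therefore $x = B(B^T M B)^{-1}B^T w$ is the unique element of $L$ satisfying \eqref{eq:system}. Uniqueness transfers back from $\R^m$ to $L$ because $\xi \mapsto B\xi$ is a bijection between $\R^m$ and $L$.

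I don't expect a serious obstacle here; the only point requiring a little care is the bookkeeping that the orthogonality of the basis $\{v_1,\dots,v_m\}$ is not actually needed for invertibility of $B^T M B$ (linear independence suffices), though it may be convenient for later use of the formula (e.g.\ if one wants $B^T B = I_m$ when computing the projector $P_L = B B^T$). I would simply carry the argument through with linear independence, and the explicit solution formula $x = B(B^T M B)^{-1}B^T w$ falls out directly from solving the reduced system.
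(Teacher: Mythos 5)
Your proposal is correct and uses essentially the same ingredients as the paper's proof: invertibility of $B^TMB$ from positive-definiteness of $M$ together with the full column rank of $B$, and the explicit formula $x=B(B^TMB)^{-1}B^Tw$. The only (harmless) difference is organizational — you derive existence and uniqueness at once by reducing to the $m\times m$ system $B^TMB\,\xi=B^Tw$ through the parametrization $x=B\xi$, whereas the paper verifies the formula solves \eqref{eq:system} directly and proves uniqueness separately by applying positive-definiteness to the difference of two solutions; your observation that only linear independence (not orthogonality) of the basis is needed is also consistent with the paper's argument.
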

\begin{proof} To verify the invertibility of $B^TMB$, take any $u \in \R^m$ such that $B^TMBu = 0$ and get $(Bu)^TMBu = 0$. Since $M$ is positive-definite, it follows that $Bu = 0$, which yields $u = 0$ by the linear independence of the columns of $B$. To check further that $B(B^TMB)^{-1}B^T w$ is a solution to \eqref{eq:system}, observe that $B(B^TMB)^{-1}B^T w \in L$, which amounts to the fulfillment of the first inclusion in \eqref{eq:system}. For the second one in \eqref{eq:system}, pick any $u \in \R^m$ and get
$$
(Bu)^T(MB(B^TMB)^{-1}B^T w-w) = u^T(B^TMB(B^TMB)^{-1}B^Tw - B^Tw) = u^T(B^Tw - B^Tw) = 0,
$$
which shows that $B(B^TMB)^{-1}B^T w$ satisfies \eqref{eq:system}. To verify the uniqueness of solutions to \eqref{eq:system}, take two vectors $x_1$, $x_2$ for which $x_1 - x_2 \in L$ and $M(x_1 - x_2) \in L^\bot$. This yields $(x_1 - x_2)^T M(x_1 - x_2) = 0$ telling us that $x_1 - x_2 = 0$ by the positive-definiteness of $M$ and thus completing the proof. 
\end{proof}

The next lemma concerns relationships between proto-differentiability of mappings and their graphical derivatives under invertible linear operators. 

\begin{Lemma}\label{lem:gphDGF} Let $F$ and $G$  be set-valued mappings from $\R^n$ to $\R^m$ such that $\gph G = A(\gph F)$, where $A:\R^n\times \R^m\to\R^n \times\R^m$ is a linear invertible operator. Then given 
$(\bar{x},\bar{v}) \in \R^n \times \R^m$ and $(\bar{u},\bar{w}) := A(\bar{x},\bar{v})$, the 
proto-differentiability of $F$ at $\bar{x}$ for $\bar{v}$ is equivalent to the proto-differentiability of $G$ at $\bar{u}$ for $\bar{w}$. In that case, we have the graphical derivative relationship
\begin{equation}\label{proto-gph}
\gph D G(\bar{u}|\bar{w}) = A[\gph D F(\bar{x}|\bar{v})].
\end{equation}
\end{Lemma}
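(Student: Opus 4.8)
The plan is to exploit the defining relation $\gph G = A(\gph F)$ together with the fact that a linear invertible operator maps tangent cones to tangent cones. First I would recall that proto-differentiability of $F$ at $\bar x$ for $\bar v$ means, by definition, that the graphical limit $\glimsup_{t\downarrow 0}\Delta_t F(\bar x|\bar v)$ is a full graphical limit; equivalently, in view of the discussion around \eqref{eq:8(15)}, that the tangent cone $T_{\gph F}(\bar x,\bar v)$ coincides with the full set limit $\Lim_{t\downarrow 0}(\gph F - (\bar x,\bar v))/t$, i.e.\ the outer and inner set limits of the difference quotient sets agree. So the statement reduces to a purely geometric claim about the set $C:=\gph F\subset\R^n\times\R^m$ and the point $(\bar x,\bar v)\in C$: namely that $C$ is geometrically derivable at $(\bar x,\bar v)$ if and only if $A(C)$ is geometrically derivable at $A(\bar x,\bar v)=(\bar u,\bar w)$, and in that case $T_{A(C)}(\bar u,\bar w)=A\big[T_C(\bar x,\bar v)\big]$.

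The key step is the tangent-cone transformation formula under an invertible linear map. I would prove $T_{A(C)}(A(\bar z))=A\big[T_C(\bar z)\big]$ directly from the definition of the contingent cone: if $w\in T_C(\bar z)$, pick $z_k\xrightarrow{C}\bar z$ and $t_k\downarrow 0$ with $(z_k-\bar z)/t_k\to w$; then $A z_k\xrightarrow{A(C)}A\bar z$ and $(Az_k-A\bar z)/t_k = A\big((z_k-\bar z)/t_k\big)\to Aw$ by continuity of $A$, so $Aw\in T_{A(C)}(A\bar z)$; the reverse inclusion follows by applying the same argument to $A^{-1}$, which is also linear and invertible. The identical computation with set limits replacing the contingent cone gives $\Limsup_{t\downarrow 0}(A(C)-A\bar z)/t = A\big[\Limsup_{t\downarrow 0}(C-\bar z)/t\big]$ and likewise for $\Liminf$; here I use that $A$, being a linear homeomorphism, commutes with both $\Limsup$ and $\Liminf$ of set sequences (a linear isomorphism is in particular a continuous bijection with continuous inverse, so it preserves convergent subsequences in both directions). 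Since $C$ is geometrically derivable at $\bar z$ exactly when these two set limits coincide, and $A$ is injective, the limits coincide for $C$ at $(\bar x,\bar v)$ if and only if they coincide for $A(C)$ at $(\bar u,\bar w)$.

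Finally I would translate back to the language of proto-differentiability and graphical derivatives: the equivalence of the two set limits for $\gph G = A(\gph F)$ is precisely the equivalence of proto-differentiability of $G$ at $\bar u$ for $\bar w$ and of $F$ at $\bar x$ for $\bar v$, and when these hold, $\gph DG(\bar u|\bar w)=T_{\gph G}(\bar u,\bar w)=T_{A(\gph F)}(\bar u,\bar w)=A\big[T_{\gph F}(\bar x,\bar v)\big]=A\big[\gph DF(\bar x|\bar v)\big]$, which is \eqref{proto-gph}. The only mild obstacle I anticipate is bookkeeping: making sure the set-limit version of the transformation formula is stated and used correctly (as opposed to only the contingent-cone version), since proto-differentiability is defined via full graphical limits rather than just the outer limit — but because $A$ is a linear isomorphism this is routine, with no genuine difficulty.
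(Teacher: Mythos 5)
Your proof is correct and takes essentially the same route as the paper: both rest on the identity $\gph \Delta_t G(\bar{u}|\bar{w}) = A[\gph \Delta_t F(\bar{x}|\bar{v})]$ (equivalently, on transforming the sets $(\gph F-(\bar{x},\bar{v}))/t$ by $A$) and on passing the invertible linear map $A$ through the outer and inner set limits as $t\downarrow 0$. The only difference is in the justification of that commutation step, and it is cosmetic: you prove it directly from the sequential definitions of $\Limsup$ and $\Liminf$ using continuity of $A$ and $A^{-1}$, whereas the paper verifies norm-coercivity of $A$ and $A^{-1}$ and invokes the corresponding set-convergence theorem of Rockafellar--Wets.
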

\begin{proof} For all positive numbers $t$, we obviously get
$$
\gph \Delta_t G (\bar{u}|\bar{w}) = \frac{\gph G - (\bar{u},\bar{w})}{t} = \frac{A(\gph F) - A(\bar{x},\bar{v})}{t}.
$$
which readily yields the equalities
$$
A^{-1}[\gph \Delta_t G (\bar{u}|\bar{v})]  = \frac{\gph F - (\bar{x},\bar{v})}{t} = \gph \Delta_t F(\bar{x}|\bar{v}),
$$
$$
\gph \Delta_t G(\bar{u}|\bar{w}) = A[\gph \Delta_t F(\bar{x}|\bar{v})].
$$
Assuming now that $G$ is proto-differentiable at $\bar{u}$ for $\bar{w}$ and taking the limits on both sides of the above equation as $t \downarrow 0$ bring us to the expression
\begin{equation}\label{eq:DG}
\gph DG(\bar{u}|\bar{w}) = \Lim\limits_{t \downarrow 0} A[\gph \Delta_t F(\bar{x}|\bar{v})].        
\end{equation}
Check that $A$ is norm-coercive. Indeed, the invertibility of $A$ ensures that $\|A^{-1}\| \ne 0$ and
$$
\|x\| = \|A^{-1}Ax\| \le \|A^{-1}\|\cdot\|A(x)\|\;\mbox{ for all }\;x \in \R^{n\times m}.
$$
Therefore, the convergence $\|x\| \to \infty$ obviously yields $\|A(x)\| \to \infty$. The norm-coercivity of $A^{-1}$ can be checked similarly. Further, rewrite the right-hand side of \eqref{eq:DG} in the form
\begin{equation*}
\gph DG(\bar{u}|\bar{w}) = A \Big[ \Lim\limits_{t \downarrow 0} \gph \Delta_t F(\bar{x}|\bar{v})\Big]        
\end{equation*}
and conclude that the outer limit of $\gph \Delta_t F(\bar{x}|\bar{v})$ exists as $t \downarrow 0$, which means
the proto-differentiability of $F$ at $\bar{x}$ for $\bar{v}$. Moreover, this gives us \eqref{proto-gph}. Assuming on the other hand that $F$ is proto-differentiable at $\bar{x}$ for $\bar{v}$, we write $\gph F = A^{-1}(\gph G)$ and deduce from the above that $G$ is proto-differentiable at $\bar u$ for $\bar w$ together with the fulfillment of the graphical derivative formula \eqref{proto-gph}.
\end{proof}

The final lemma concerns eigenvalue lower-boundedness of self-adjoint linear operators on subspaces.

\begin{Lemma}\label{lem:posdef} Let $L$ be a subspace of $\R^n$, let  $A: \R^n \to \R^n$ be a self-adjoint linear operator, and let $\sigma \in \R$. Impose on $L$ the  eigenvalue lower-boundedness
$$
\la w, Aw \ra \ge \sigma \|w\|^2\;\mbox{ for all }\;w \in L.
$$
Then there exists a self-adjoint linear operator $B: \R^n \to \R^n$ such that the eigenvalue lower-boundedness property of $A$ on $L$ ix extended to to the same property for $B$ on the entire space, i.e.. 
$$\la w,Bw \ra = \la w, Aw \ra\;\mbox{ for all }\;w\in L,
$$
$$\la w, Bw \ra \ge \sigma \|w\|^2 \;\mbox{ for all }\;w\in\R^n.
$$
\end{Lemma}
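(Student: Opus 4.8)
The plan is to work with the orthogonal decomposition $\R^n = L \oplus L^\perp$ and define $B$ blockwise with respect to it. First I would let $P = P_L$ be the orthogonal projector onto $L$ and set $Q = I - P = P_{L^\perp}$. The natural candidate is to keep the action of $A$ on $L$ and to put a large multiple of the identity on $L^\perp$, with a cross term chosen so that $B$ restricted to $L$ still reproduces $A|_L$; concretely, I would try
$$
B := P A P + \rho\, Q\;\;\text{ for a scalar }\;\rho>0\;\text{ to be chosen large},
$$
and check that this already works, or, if the cross terms $PAQ$ cause trouble, include them and compensate. Since $PAP$ is self-adjoint (as $A$ is self-adjoint and $P$ is an orthogonal projector, hence self-adjoint) and $Q$ is self-adjoint, $B$ is self-adjoint regardless of the choice of $\rho$.

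The verification then splits into the two required identities. For $w \in L$ we have $Pw = w$ and $Qw = 0$, so $\la w, Bw\ra = \la w, PAP w\ra = \la Pw, A Pw\ra = \la w, Aw\ra$, giving the first claim. For the global lower bound, write an arbitrary $w \in \R^n$ as $w = u + z$ with $u = Pw \in L$ and $z = Qw \in L^\perp$, so by the Pythagorean formula \eqref{eq:pythagoras} we have $\|w\|^2 = \|u\|^2 + \|z\|^2$. Then $\la w, Bw\ra = \la u, Au\ra + \rho\|z\|^2 \ge \sigma\|u\|^2 + \rho\|z\|^2$ by the hypothesis applied to $u\in L$. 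If $\sigma \ge 0$ this is already $\ge \sigma(\|u\|^2+\|z\|^2) = \sigma\|w\|^2$ once $\rho \ge \sigma$, and we are done. If $\sigma < 0$ the estimate $\la w,Bw\ra \ge \sigma\|u\|^2 + \rho\|z\|^2 \ge \sigma\|u\|^2 \ge \sigma\|w\|^2$ works trivially for any $\rho \ge 0$, since $\|u\|^2 \le \|w\|^2$ and $\sigma<0$. Either way, choosing $\rho := \max\{\sigma,0\}$ (or simply $\rho := |\sigma|$) suffices.

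The only point that needs a second look is whether the candidate $B = PAP + \rho Q$ has to be modified because $A$ may send vectors of $L$ into $L^\perp$-directions; but since I am projecting with $P$ on both sides, $PAP$ kills exactly those components, and the computation above shows the two identities hold exactly as stated, so no cross-term repair is needed. The main (mild) obstacle is therefore just bookkeeping the two sign cases for $\sigma$; there is no genuine difficulty, and the construction is explicit. I would present it in this order: (1) set up $P$, $Q$ and define $B := PAP + \rho Q$ with $\rho := \max\{\sigma,0\}$; (2) note self-adjointness of $B$; (3) verify $\la w,Bw\ra = \la w,Aw\ra$ on $L$; (4) verify the global inequality via \eqref{eq:pythagoras}, splitting on the sign of $\sigma$.
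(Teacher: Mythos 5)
Your proposal is correct and follows essentially the same route as the paper: define $B = P_L A P_L + (\text{scalar})\,P_{L^\perp}$, note self-adjointness, and verify the two properties via the Pythagorean formula \eqref{eq:pythagoras}. The only difference is that the paper takes the scalar on $L^\perp$ to be $\sigma$ itself rather than $\max\{\sigma,0\}$, which gives $\la w,Bw\ra \ge \sigma\|P_Lw\|^2+\sigma\|P_{L^\perp}w\|^2=\sigma\|w\|^2$ in one line and so avoids your case split on the sign of $\sigma$.
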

\begin{proof} Recalling that both orthogonal projector operators $P_L$ and $P_{L^\perp}$ are self-adjoint, define the self-adjoint linear operator $B$  by 
$$
B := P_L AP_L + \sigma P_{L^\perp}. 
$$
For any $w \in \R^n$, we get the equalities
$$\begin{aligned}
\la w, B w \ra &= \la w, P_L AP_L w \ra + \sigma\la w,  P_{L^\perp}w \ra \\
 &= \la P_Lw,  AP_L w \ra + \sigma\la w,  P_{L^\perp}P_{L^\perp}w \ra \\
 &= \la P_Lw, AP_Lw \ra + \sigma \| P_{L^\perp}w\|^2.
\end{aligned} $$
Since $P_{L^\perp}(w) = 0$ and $P_L(w) = w$ when $w \in L$, kit follows that $\la w, B w \ra = \la w, Aw \ra $. Moreover, by $P_Lw \in L$ and  \eqref{eq:pythagoras}, the latter implies that 
$$
\la w, Bw \ra = \la P_Lw, AP_Lw \ra + \sigma \| P_{L^\perp}w\|^2 \ge \sigma \|P_L w\|^2 + \sigma \| P_{L^\perp}w\|^2 = \sigma \|w\|^2, 
$$
which therefore verify our claims.
\end{proof}

Now we are ready to  establish the main result of this section.

\begin{Theorem}\label{prop:gtdtwicediff} Let $f: \R^n \to \overline{\R}$ be  prox-bounded on $\R^n$ and  $r$-level prox-regular at $\bar{x}$ for $\bar{v} \in \partial f(\bar{x})$. Then the following are equivalent for all $\lambda \in (0,1/r)$:
\begin{itemize}
\item[{\rm \textbf{(i)}}] $f$ is generalized twice differentiable at $\bar{x}$ for $\bar{v}$. 
\item[{\rm \textbf{(ii)}}] $e_{\lambda} f$ is twice differentiable at $\bar{x} + \lambda \bar{v}$.
\end{itemize}
\end{Theorem}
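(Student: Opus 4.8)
The plan is to reduce everything to the shifted function $g(x) := f(x) - \la \bar{v}, x - \bar{x}\ra$, which (by the same reasoning used in Proposition~\ref{prop:twice-epi}) is $r$-level prox-regular at $\bar{x}$ for $0$, is prox-bounded, satisfies $d^2 g(\bar{x}|0) = d^2 f(\bar{x}|\bar{v})$, and whose Moreau envelope is linked to that of $f$ through \eqref{eq:env_gf}. Since the quadratic/linear shift affects neither generalized twice differentiability nor classical twice differentiability (Proposition~\ref{prop:sumC2gendif}(ii) and the classical sum rule for Hessians), it suffices to prove the equivalence for $g$ at $\bar{x}$ for $0$, i.e.\ I may assume from the start that $\bar{v} = 0$ and $\bar{x} + \lambda\bar{v} = \bar{x}$.

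First I would establish (i)$\Rightarrow$(ii). By Proposition~\ref{prop:twice-epi}, generalized twice differentiability of $f$ (being in particular twice epi-differentiability) implies $e_\lambda f$ is twice epi-differentiable at $\bar{x}$ for $\bar{v}$, and by \eqref{eq:envd^2} its second subderivative is the Moreau envelope of $\tfrac12 d^2 f(\bar{x}|\bar{v})$. Now write $d^2 f(\bar{x}|\bar{v}) = q_A + \delta_L$. The key computation is that $e_\lambda[\tfrac12(q_A + \delta_L)]$ is a genuine quadratic form on all of $\R^n$: minimizing $\tfrac12\la u,Au\ra + \tfrac{1}{2\lambda}\|u-x\|^2$ over $u \in L$ is exactly the system \eqref{eq:system} with $M = A + \lambda^{-1}I$ (positive-definite for small $\lambda$ because $A \succeq r\,I$ on $L$ by Corollary~\ref{cor:d2prox} applied through Proposition~\ref{prop:proxlan}(ii), so $r > -1/\lambda$), and Lemma~\ref{lem:sol} gives the unique minimizer $u(x) = B(B^TMB)^{-1}B^T(\lambda^{-1}x)$ as a linear function of $x$, whence $e_\lambda[\tfrac12 d^2 f(\bar{x}|\bar{v})](x)$ is a quadratic form $\tfrac12\la x, Cx\ra$ with $C$ symmetric. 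Thus $d^2[\tfrac12 e_\lambda f](\bar{x}|\tfrac12\bar{v})$ is a finite quadratic form everywhere; combining this with the $\mathcal{C}^{1,1}$-smoothness of $e_\lambda f$ near $\bar{x}$ (Lemma~\ref{prop:prox-mor-pr}(ii)) and the quadratic-expansion machinery exactly as in the proof of Theorem~\ref{prop:d2finite}(ii) (via \cite[Theorem~6.7]{proxreg} and \cite[Corollary~13.42]{Rockafellar98}) yields classical twice differentiability of $e_\lambda f$ at $\bar{x} + \lambda\bar{v}$.

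For (ii)$\Rightarrow$(i), twice differentiability of $e_\lambda f$ at $\bar{z} := \bar{x} + \lambda\bar{v}$ gives $\nabla^2 e_\lambda f(\bar z)$, hence (Proposition~\ref{rmk:protosingle_diff}) proto-differentiability of $\nabla e_\lambda f$ at $\bar z$ for $\bar v$ with $D(\nabla e_\lambda f)(\bar z|\bar v) = \nabla^2 e_\lambda f(\bar z)$, and also twice epi-differentiability of $e_\lambda f$ there. By Proposition~\ref{prop:twice-epi}, $f$ is twice epi-differentiable at $\bar{x}$ for $\bar{v}$; it remains only to show $d^2 f(\bar{x}|\bar{v})$ is a generalized quadratic form. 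Here I invoke the resolvent/shear identity $\nabla e_\lambda f = [\lambda I + T_\varepsilon^{-1}]^{-1}$ from \eqref{eq:gradenv}, which by Lemma~\ref{prop:shear} means $\gph T_\varepsilon = A_\lambda(\gph \nabla e_\lambda f)$ for the invertible linear map $A_\lambda$; Lemma~\ref{lem:gphDGF} then transfers proto-differentiability to $T_\varepsilon$ (equivalently $\partial f$ near $(\bar x,\bar v)$) and gives $\gph D(\partial f)(\bar x|\bar v) = A_\lambda[\gph \nabla^2 e_\lambda f(\bar z)]$, which is a \emph{linear subspace} of $\R^n\times\R^n$ (image of the graph of a symmetric matrix under an invertible linear map). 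Finally, since $f$ is prox-regular and twice epi-differentiable at $\bar{x}$ for $\bar{v}$, the standard relation $\gph D(\partial f)(\bar x|\bar v) = \gph \partial\!\left(\tfrac12 d^2 f(\bar x|\bar v)\right)$ (e.g.\ \cite[Theorem~13.40]{Rockafellar98}) shows $\partial(\tfrac12 d^2 f(\bar x|\bar v))$ is a generalized linear mapping; combined with the fact that a proper, l.s.c., positively homogeneous-of-degree-2 function whose subdifferential graph is a subspace must be a generalized quadratic form (using, e.g., the structure result for such functions together with the lower bound $d^2 f(\bar x|\bar v) \ge r\|\cdot\|^2$ from Corollary~\ref{cor:d2prox}), we conclude $f$ is generalized twice differentiable at $\bar x$ for $\bar v$.

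The main obstacle I anticipate is the last step of (ii)$\Rightarrow$(i): passing from ``$\gph \partial q$ is a subspace'' back to ``$q = \tfrac12 q_A + \delta_L$'' for the positively homogeneous degree-2 function $q = d^2 f(\bar x|\bar v)$. One must show that such a $q$ is automatically of the claimed form — the natural route is to note $L := \dom q$ is then a subspace (from positive homogeneity of degree 2 plus convexity forced by the lower bound $q \ge r\|\cdot\|^2$ and the subspace structure of $\gph\partial q$, which makes $q$ essentially a convex quadratic on its domain after a shift), and that $q$ restricted to $L$ is a finite quadratic form because its subgradient is linear there; identifying the symmetric matrix $A$ then requires care about extending off $L$, which is exactly where Lemma~\ref{lem:posdef} is designed to be used. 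A secondary technical point is verifying the hypotheses of \cite[Theorem~6.7]{proxreg} and \cite[Corollary~13.42]{Rockafellar98} for $e_\lambda f$ in the (i)$\Rightarrow$(ii) direction, but that is parallel to the already-proven Theorem~\ref{prop:d2finite}(ii) and should go through routinely.
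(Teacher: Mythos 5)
Your route is essentially the paper's: for (i)$\Rightarrow$(ii) you pass through Proposition~\ref{prop:twice-epi} and formula \eqref{eq:envd^2}, show that the Moreau envelope of the generalized quadratic form is an ordinary quadratic form by solving the linearly constrained minimization via Lemma~\ref{lem:sol}, and then upgrade twice epi-differentiability of $e_\lambda f$ to classical twice differentiability through Theorem~\ref{prop:d2finite}(ii); for (ii)$\Rightarrow$(i) you use Proposition~\ref{rmk:protosingle_diff}, the resolvent identity \eqref{eq:gradenv}, Lemma~\ref{prop:shear}, Lemma~\ref{lem:gphDGF}, and \cite[Proposition~13.40]{Rockafellar98}, exactly as in the paper. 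The preliminary reduction to $\bar{v}=0$ is harmless but not needed.

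The one genuine gap is in your application of Lemma~\ref{lem:sol}. You set $M=A+\lambda^{-1}I$, but Corollary~\ref{cor:d2prox} only yields $\la x,Ax\ra\ge -r\|x\|^2$ for $x\in L$ (note also the sign: the bound is $-r$, not $r$), so $M$ is known to be positive definite only on $L$, whereas Lemma~\ref{lem:sol} as stated requires positive definiteness on all of $\R^n$. The paper closes precisely this hole by first invoking Lemma~\ref{lem:posdef} to replace $A$ by a symmetric $B$ agreeing with $A$ on $L$ and satisfying $\la x,Bx\ra\ge -r\|x\|^2$ on all of $\R^n$, after which $B+\lambda^{-1}I$ is globally positive definite and Lemma~\ref{lem:sol} applies; alternatively one can check that the proof of Lemma~\ref{lem:sol} uses positive definiteness only on $L$, but either way the step must be justified. (You do mention Lemma~\ref{lem:posdef}, but you place it in the last step of (ii)$\Rightarrow$(i); in the paper it belongs here.) Also, your strong-convexity bound on $L$ works for every $\lambda\in(0,1/r)$, as the theorem requires, so the restriction ``for small $\lambda$'' should be dropped, and in (ii)$\Rightarrow$(i) you should keep the $f$-attentive localization $T_\ve$ rather than $\partial f$ itself, since subdifferential continuity is not assumed. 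Your concern about the final step---passing from ``$\gph\partial q$ is an $n$-dimensional subspace'' to ``$q$ is a generalized quadratic form''---is legitimate, but the paper treats this as known (it goes back to \cite{2sd2,genhes}), so no additional argument is demanded there.
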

\begin{proof} Fix $\lambda \in (0,1/r)$ and first verify implication {\bf(i)$\Longrightarrow$(ii)}. By the generalized twice differentiability of $f$ at $\bar{x}$ for $\bar{v}$, there exist a symmetric matrix $A \in \R^{n \times n}$ and a subspace $L \subset \R^n$ such that
\begin{equation}\label{eq:d2fgtd}
 d^2 f(\bar{x}|\bar{v}) = q_A + \delta_L.
\end{equation}
Since $f$ is $r$-level prox-regular at $\bar{x}$ for $\bar{v} \in \partial f(\bar{x})$, it follows from 
Corollary~\ref{cor:d2prox} combined with \eqref{eq:d2fgtd} that
\begin{equation}\label{eq:assessmentA}
\la x, Ax \ra \ge -r\|x\|^2\;\mbox{ for all }\;x\in L.
\end{equation}
Using \eqref{eq:assessmentA}, apply Lemma~\ref{lem:posdef} and find a matrix $B$ such that
\begin{equation}\label{eq:assessmentA2}
\quad \la x, Bx \ra = \la x, Ax \ra\;\mbox{ for all }\;x\in L\;\mbox{ and }\;\la x, Bx \ra \ge -r\|x\|^2\;\mbox{ for all }\;x\in\R^n,
\end{equation}
This allows us to rewrite \eqref{eq:d2fgtd} in the form
\begin{equation}\label{eq:d2fgtd2}
d^2 f(\bar{x}|\bar{v}) = q_B + \delta_L,
\end{equation}
which being combined with \eqref{eq:envd^2} and \eqref{eq:d2fgtd2} gives us
$$\begin{aligned}
d^2 e_{\lambda} f(\bar{x}+\lambda\bar{v}|\bar{v})(w) &= 2d^2\Big[ \frac{1}{2} e_{\lambda} f\Big]\Big(\bar{x} + \lambda\bar{v} \Big|\dfrac{1}{2}\bar{v}\Big)(w)\\
&=2 e_{\lambda} \Big[ \frac{1}{2}d^2 f(\bar{x}|\bar{v}) \Big](w) \\
&= 2\inf\limits_{ x \in \R^n} \Big\{ \frac{1}{2}\la x,Bx \ra + \delta_L(x) + \frac{1}{2\lambda}\|x - w\|^2 \Big\}\\
&= \inf\limits_{ x \in L}\Big\{ \la x,Bx \ra + \frac{1}{\lambda}\|x - w\|^2 \Big\},\quad w\in\R^n.
\end{aligned} 
$$
If $L = \{0\}$, we obviously have
$$
d^2 e_{\lambda} f(\bar{x}+\lambda\bar{v}|\bar{v})(w) = \la 0,B0\ra + \dfrac{1}{\lambda}\|0 - w\|^2 = \dfrac{1}{\lambda}\|w\|^2,\quad w\in\R^n,  
$$
so this is a quadratic form. Otherwise, consider the smooth function
$$
\varphi (x) :=  \la x,Bx \ra + \frac{1}{\lambda}\|x - w\|^2, \quad  x \in L,
$$
Combining $\lambda < 1/r$ with \eqref{eq:assessmentA2} tells us that $\varphi$ is strongly convex on $L$, and thus $\varphi$ admits a unique global minimizer $\tilde{x}$. Then we get by the subdifferential Fermat and elementary sum rules that
$$
0 \in \nabla \varphi(\tilde{x}) + N_L(\tilde{x}) = 2B\tilde{x} + \frac{2}{\lambda}(\tilde{x} - w) + L^\perp,
$$
which can be rewritten in the form
$$
2B\tilde{x} + \frac{2}{\lambda}(\tilde{x} - w) \in -L^\perp = L^\perp.
$$
This ensures that $\tilde{x}$ satisfies the system
 \begin{equation}\label{eq:system2}
 \left\{ \begin{array}{l}
x \in L, \\
2\Big(B + \dfrac{1}{\lambda} I\Big)x - \dfrac{2}{\lambda} w \in L^\perp.  
\end{array} \right.    
\end{equation}
Take any orthogonal basis of $L$, and let $C$ be the matrix whose columns are the vectors of the basis. Since $1/\lambda < r$, it follows from \eqref{eq:assessmentA2} that $B + \frac{1}{\lambda}I$ is a positive-definite matrix. By Lemma \ref{lem:sol}, $\tilde{x}$ is the unique solution to system \eqref{eq:system2} that is given by the formula
$$
\tilde{x} = \frac{1}{\lambda} C\left(C^T\left(B + \frac{1}{\lambda}I \right)C\right)^{-1}C^T w.
$$
Define now the matrices
$$
M :=  \frac{1}{\lambda} C\Big(C^T\Big(B + \frac{1}{\lambda}I\Big)C\Big)^{-1}C^T, \quad Q:= M B M + \frac{1}{\lambda}\Big(M - I\Big)^2
$$
and observe that both $M$ and $Q$ are symmetric and that $\tilde{x}= Mw$. Then we get
$$
\begin{aligned}
d^2 e_{\lambda} f (\bar{x} + \lambda\bar{v}|\bar{v})(w) = \varphi(\tilde{x}) = \la Mw,BMw \ra + \frac{1}{\lambda}\left\|Mw - w\right\|^2 
= \la w, Qw \ra.
\end{aligned}
$$
This tells us that if $f$ is generalized twice differentiable at $\bar{x}$ for $\bar{v}$, then $d^2 e_{\lambda} f(\bar{x} + \lambda \bar{v}|\bar{v})(w)$ is an ordinary quadratic form. It follows from Proposition~\ref{prop:twice-epi}, due to the twice epi-differentiability of $f$ at $\bar{x}$ for $\bar{v}$, that $e_{\lambda} f$ is twice epi-differentiable at $\bar{x} + \lambda\bar{v}$ for $\bar{v}$. Since $d^2 e_{\lambda} f(\bar{x} + \lambda\bar{v}|\bar{v})$ is a quadratic form, $e_\lambda f$ is generalized twice differentiable at $\bar{x} + \lambda \bar{v}$ for $\bar{v}$. Remembering that $f$ is prox-regular at $\bar{x}$ for $\bar{v}$, we conclude that $e_{\lambda} f$ is of class $\mathcal{C}^{1,1}$ around $\bar{x} + \lambda\bar{v}$. Therefore, Proposition~\ref{prop:d2finite} ensures that $e_\lambda f$ is twice differentiable at $\bar{x} + \lambda\bar{v}$.\vspace*{0.05in}

To verify now the reverse implication {\bf (ii)$\Longrightarrow$(i)}, assume that $e_\lambda f$ is twice differentiable at $\bar{x} + \lambda \bar{v}$. As follows from Proposition~\ref{prop:twice-epi}, $f$ is twice epi-differentiable at $\bar{x}$ for $\bar{v}$. It remains to  show that $d^2f(\bar{x}|\bar{v})$ is a generalized quadratic form. Since $\nabla e_{\lambda}f$ is differentiable at $\bar{x} + \lambda\bar{v}$, it is proto-differentiable at $\bar{x} + \lambda\bar{v}$ for $\bar{v}$ by Proposition~\ref{rmk:protosingle_diff}. Then  Lemma~\ref{prop:prox-mor-pr} tells is that 
$$
\nabla e_\lambda f = [\lambda I + T_\ve^{-1}]^{-1}\;\mbox{ in a neighborhood }\;U\;\mbox{ of }\;\bar{x} + \lambda\bar{v}
$$
for some mapping $f$-attentive $\ve$-localization of $\partial f$ around $(\bar{x},\bar{v})$, and hence $[\lambda I + T_\ve^{-1}]^{-1}$ is also proto-differentiable at $\bar{x} + \lambda \bar{v}$ for $\bar{v}$. Since $\nabla e_{\lambda}f$ is differentiable at $\bar{x} + \lambda\bar{v}$, we deduce from \cite[Example 8.34]{Rockafellar98} that the proto-derivative of $\nabla e_{\lambda}f$ at $\bar{x} + \lambda \bar{v}$ for $\bar{v}$ is given by
\begin{equation}\label{eq:D_env}
D[\lambda I + T_\ve^{-1}]^{-1}(\bar{x} + \lambda\bar{v}|\bar{v})(w) = D\nabla e_{\lambda}f(\bar{x} + \lambda\bar{v}|\bar{v})(w) = \nabla^2 e_{\lambda}f(\bar{x} + \lambda\bar{v})w,\quad w\in\R^n.
\end{equation}
It follows now from Lemma~\ref{prop:shear} that
$$
\gph T_\ve = A_{\lambda} [\gph [\lambda I + T^{-1}_\ve]^{-1}]
$$
with the mapping $A_{\lambda}$ defined therein. Applying Lemma~\ref{lem:gphDGF} with $A: = A_{\lambda}$, $F:=[\lambda I + T_\ve^{-1}]^{-1}$, and $G: = T_\ve$ ensures that $T_\ve$ is proto-differentiable at $\bar{x}$ for $\bar{v}$ and
\begin{equation}\label{eq:DT}
\gph DT_\ve(\bar{x}|\bar{v}) = A_\lambda[\gph D [\lambda I + T_\ve^{-1}]^{-1}](\bar{x} + \lambda\bar{v}|\bar{v})].
\end{equation}
By \eqref{eq:D_env}, $\gph D [\lambda I + T_\ve^{-1}]^{-1}](\bar{x} + \lambda\bar{v}|\bar{v})$ is a linear subspace of dimension $n$ in $\R^n \times \R^n$. This implies that $A_\lambda[\gph D [\lambda I + T_\ve^{-1}]^{-1}](\bar{x} + \lambda\bar{v}|\bar{v})]$ is also a linear subspace of dimension $n$ due to the invertibility of $A_{\lambda}$. Combining the latter with \eqref{eq:DT}, we conclude that $\gph DT_\ve(\bar{x}|\bar{v})$ is a $n$-dimensional linear subspace of $\R^n \times \R^n$. Applying finally \cite[Proposition~13.40]{Rockafellar98} tells us that
$$
\partial\Big( \frac{1}{2}d^2 f(\bar{x}|\bar{v})\Big) = DT_\ve(\bar{x}|\bar{v}),
$$
and so the set $\gph \partial\big( \dfrac{1}{2}d^2 f(\bar{x}|\bar{v})\big)$ is a linear subspace of dimension $n$. This means that $\dfrac{1}{2}d^2 f(\bar{x}|\bar{v})$ is a generalized quadratic form and thus completes the proof of the theorem.
\end{proof}

To conclude the section, we provide a uniform characterization of generalized twice differentiability along $f$-attentive $\ve$-localizations of $\partial f$ around the reference point via the classical twice differentiability of the associated Moreau envelopes. The uniformity here refers to the fact that we can fix a single sufficiently small parameter for every points in the aforementioned $f$-attentive localization.

\begin{Corollary}\label{cor:gtdtwicediff} Let $f: \R^n \to \overline{\R}$ be prox-bounded on $\R^n$ and $r$-level 
prox-regular at $\bar{x}$ for $\bar{v} \in \partial f(\bar{x})$. Then for any small $\ve>0$, there exists an $f$-attentive $\ve$-localization $T_\ve$ of $\partial f$ around $(\bar{x},\bar{v})$ such that the following assertions are equivalent for all $\lambda \in (0,1/r)$ and all $(x,v) \in \gph T_\ve$:
\begin{itemize}
\item[{\rm \textbf{(i)}}] $f$ is generalized twice differentiable at $x$ for $v$. 
 
\item[{\rm \textbf{(ii)}}] $e_{\lambda} f$ is twice differentiable at $x + \lambda v$.
\end{itemize}
Furthermore, we have for all $(x,v) \in \gph T_\ve$ that
\begin{equation}\label{eq:envd^22}
 e_{\lambda} \Big[ \frac{1}{2}d^2 f (x|v)\Big] =  d^2\Big[ \frac{1}{2} e_{\lambda} f\Big]\Big(x+\lambda v \Big| \dfrac{1}{2}v\Big).
\end{equation}
\end{Corollary}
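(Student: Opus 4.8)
The plan is to deduce this uniform statement from the pointwise equivalence in Theorem~\ref{prop:gtdtwicediff} by showing that $r$-level prox-regularity, \emph{with the very same level} $r$, is inherited by every pair $(x,v)$ lying in a sufficiently small $f$-attentive localization of $\partial f$ around $(\bar x,\bar v)$. Keeping the level fixed at $r$ is precisely what allows one common range $\lambda\in(0,1/r)$ to serve all such $(x,v)$, which is the source of the claimed uniformity; prox-boundedness is a global property of $f$, so it transfers automatically.

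First I would recall that, by Definition~\ref{defi:varconv}, the $r$-level prox-regularity of $f$ at $\bar x$ for $\bar v$ coincides with variational $(-r)$-convexity at $\bar x$ for $\bar v$; let $\ve_0>0$ be its corresponding radius, so that \eqref{eq:svarconv} with $s=-r$ holds for all $(x,v)\in\gph T_{\ve_0}(\bar x,\bar v)$ and $x'\in B(\bar x,\ve_0)$. Then I would fix any $\ve\in(0,\ve_0)$ --- this is what ``any small $\ve>0$'' means --- and take $T_\ve$ to be the $f$-attentive $\ve$-localization of $\partial f$ around $(\bar x,\bar v)$, i.e.\ $\gph T_\ve=\gph T_\ve(\bar x,\bar v)$.

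Next I would pick an arbitrary $(x,v)\in\gph T_\ve$, so in particular $v\in\partial f(x)$. Proposition~\ref{prop:proxlan}(ii), applied with $\ve_1:=\ve_0$ and $\ve_2:=\ve$, gives that $f$ is variationally $(-r)$-convex at $x$ for $v$ with corresponding radius $\ve_0-\ve>0$, equivalently that $f$ is $r$-level prox-regular at $x$ for $v$. Since $f$ is also prox-bounded on $\R^n$, Theorem~\ref{prop:gtdtwicediff} applies verbatim to the pair $(x,v)$ and yields, for every $\lambda\in(0,1/r)$, the equivalence of ``$f$ is generalized twice differentiable at $x$ for $v$'' with ``$e_\lambda f$ is twice differentiable at $x+\lambda v$'', which is exactly \textbf{(i)}$\Leftrightarrow$\textbf{(ii)}. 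Finally, the envelope identity \eqref{eq:envd^22} follows by invoking the equality \eqref{eq:envd^2} of Proposition~\ref{prop:twice-epi} at the same pair $(x,v)$, whose hypotheses were just verified.

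I do not expect a genuinely hard step: the substance lies in Theorem~\ref{prop:gtdtwicediff} together with the localization of variational $s$-convexity in Proposition~\ref{prop:proxlan}, and what remains is only bookkeeping with the radii. The single point deserving attention is that the passage from $(\bar x,\bar v)$ to a nearby $(x,v)$ must not change the level $r$ --- Proposition~\ref{prop:proxlan}(ii) does keep $s=-r$ fixed and only shrinks the radius --- so that the parameter interval $(0,1/r)$, and hence the full conclusion, holds simultaneously for all $(x,v)\in\gph T_\ve$.
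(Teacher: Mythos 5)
Your proposal is correct and follows essentially the same route as the paper: propagate $r$-level prox-regularity (with the level $r$ unchanged) to every pair $(x,v)$ in a suitable $f$-attentive localization via Proposition~\ref{prop:proxlan}, then apply Theorem~\ref{prop:gtdtwicediff} pointwise, with \eqref{eq:envd^2} yielding \eqref{eq:envd^22}. Your explicit bookkeeping of the radii ($\ve_1=\ve_0$, $\ve_2=\ve$) and the correct attribution of \eqref{eq:envd^2} to Proposition~\ref{prop:twice-epi} only make the argument more precise than the paper's brief version.
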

\begin{proof}
Due to the imposed prox-boundedness and prox-regularity assumptions, it follows from Proposition~\ref{prop:proxlan} that there exists an $f$-attentive $\ve$-localization $T_\ve$ of $\partial f$ around $(\bar{x},\bar v)$ such that $f$ is $r$-level prox-regular at $x$ for $v$ whenever $(x,v)\in\gph f$. Applying Theorem~\ref{prop:gtdtwicediff} tells us that for all $\lambda \in (0,1/r)$, $f$ is generalized twice differentiable at $x$ for $v$ if and only if $e_{\lambda} f$ is twice differentiable at $x + \lambda v$. Since $f$ is $r$-level prox-regular at $x$ for $v$, the equality in \eqref{eq:envd^22} also follows from \eqref{eq:envd^2} in Theorem~\ref{prop:gtdtwicediff}.
\end{proof}

\section{Quadratic Bundles}\label{sec:quad}

This section addresses the notion of {\em quadratic bundles} for extended-real-valued functions, which involves the two main components: generalized twice differentiability of functions and the epigraphical limits of the corresponding second-order epi-derivatives as generalized quadratic forms.

For any proper l.s.c,\ function $f: \R^n \to \overline{\R}$, consider the set
\begin{equation}\label{eq:tapgtd}
\Omega_f := \big\{(x,v) \in \gph \partial f\;\big|\;f\text{ is generalized twice differentiable at }x\;\text{ for }\;v\big\}.
\end{equation}
As an example, let $f$ be the norm-2 function on $\R^n$, which we already studied from the viewpoint of generalized twice differentiability in Theorem~\ref{prop:normgtd}. Using the calculations therein and taking into account that the dual norm of norm-2 is itself give us the representation
$$
\Omega_f = \big\{ (x,\nabla f(x))\;\big|\;x \ne 0\big\} \cup \big\{ (0,v)\;\big|\;\|v\|< 1\big\}. 
$$
On the other hand, we have
$$
\gph \partial f =\big\{ (x,\nabla f(x))\;\big|\;x \ne 0\big\} \cup \big\{ (0,v)\;\big|\;\|v\|\le 1\big\}.
$$
This shows us that, in this particular example, the set $\Omega_f$ is not necessarily closed while it is {\em dense} in the graph 
$\gph \partial f$. It follows from the results below that this density property holds for every l.s.c.\ {\em convex} function.  Moreover, we show that the set $\Omega_f$ in \eqref{eq:tapgtd} is {\em locally dense} in the graph of $\partial f$ if $f$ is {\em prox-regular}. To proceed in this way, consider the {\em Hessian bundle} of $f$ at a point $\bar{x} \in \dom f$ is defined by
\begin{equation}\label{eq:hessianbundle}
\overline{\nabla}^2 f(\bar{x}):=\big\{ H\in\R^{n \times n}\;\big|\;\exists x_k \to \bar{x} \text{ such that } f \text{ is twice differentiable at } x_k\;\text{ and }\;\nabla^2 f(x_k) \to H \big\}.
\end{equation}
When $f$ is of class $\mathcal{C}^{1,1}$ around $\bar{x}$, the Hessian bundle $\overline{\nabla}^2 f(\bar{x})$ is a nonempty and compact set that consists of symmetric matrices; \cite[Theorem~13.52]{Rockafellar98}.

\begin{Theorem}\label{prop:omegadense} Let $f: \R^n \to \overline{\R}$ be prox-regular at $\bar{x}$ for $\bar{v} \in \partial f(\bar{x})$, and let $\ve>0$. Then we have:
\begin{itemize}
 \item[\rm\textbf{(i)}] There is an $f$-attentive $\ve$-localization $T_\ve$ of $\partial f$ around $(\bar{x},\bar{v})$ such that $\Omega_f \cap \gph T_\ve$ is dense in $\gph T_\ve$.
 
\item[\rm\textbf{(ii)}] If in addition $f$ is subdifferentially continuous at $\bar{x}$ for $\bar{v}$, then there exists a neighborhood $W$ of $(\bar{x},\bar{v})$ such that the set $\Omega_f \cap W$ is dense in $W \cap \gph \partial f$.
 
\item[\rm\textbf{(iii)}] If $f$ is convex, then the aforementioned property becomes global, i.e., $\Omega_f$ is dense in $\gph \partial f$.
\end{itemize}
\end{Theorem}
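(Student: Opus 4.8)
The plan is to push everything through the Moreau envelope. Corollary~\ref{cor:gtdtwicediff} says that, along an $f$-attentive localization, generalized twice differentiability of $f$ at $(x,v)$ is exactly classical twice differentiability of $e_\lambda f$ at $x+\lambda v$; and a $\mathcal C^{1,1}$ function is twice differentiable (classically) at Lebesgue-a.e.\ point. Since the theorem only assumes prox-regularity, I would first make the standard reduction to the prox-bounded case: restricting $f$ to a small closed ball $\bar B(\bar x,\rho)$ produces a prox-bounded function that is $r$-level prox-regular at $\bar x$ for $\bar v$ (some $r\ge0$) and that coincides with $f$ — together with $\partial f$, every $f$-attentive $\ve$-localization for $\ve<\rho$, and all second-order data — on a neighborhood of $(\bar x,\bar v)$; so the density assertions for the restriction give them for $f$. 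From here treat $f$ as prox-bounded and $r$-level prox-regular at $\bar x$ for $\bar v$, and fix $\lambda\in(0,1/r)$.

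\textbf{Part (i).} Pick $\ve>0$ small enough (shrinking the given one if needed) that both Corollary~\ref{cor:gtdtwicediff} and Lemma~\ref{prop:prox-mor-pr} are applicable with the $f$-attentive $\ve$-localization $T_\ve$ of $\partial f$ around $(\bar x,\bar v)$. By Lemma~\ref{prop:prox-mor-pr}, $e_\lambda f\in\mathcal C^{1,1}$ on the open neighborhood $U_\lambda:=\rge(I+\lambda T_\ve)$ of $\bar z:=\bar x+\lambda\bar v$, with $\nabla e_\lambda f=[\lambda I+T_\ve^{-1}]^{-1}$ there; thus $\gph\big([\lambda I+T_\ve^{-1}]^{-1}\big)$ is the graph of the continuous map $\nabla e_\lambda f$ over $U_\lambda$. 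By Lemma~\ref{prop:shear} the invertible shear $A_\lambda$ carries this graph onto $\gph T_\ve$; composing with the (homeomorphic) first-coordinate projection of a continuous graph yields a homeomorphism
$$\psi\colon\ \gph T_\ve\ \longrightarrow\ U_\lambda,\qquad \psi(x,v)=x+\lambda v.$$
Since $\nabla e_\lambda f$ is Lipschitz on $U_\lambda$, Rademacher's theorem gives a Lebesgue-null set $N\subset U_\lambda$ off which $\nabla e_\lambda f$ is differentiable, and at each $z\in U_\lambda\setminus N$ the envelope $e_\lambda f$ is twice differentiable in the classical sense (integrate the first-order expansion of $\nabla e_\lambda f$ and symmetrize to obtain the Hessian). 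A Lebesgue-null subset of an open set has dense complement, so $U_\lambda\setminus N$ is dense in $U_\lambda$. By Corollary~\ref{cor:gtdtwicediff}, $\Omega_f\cap\gph T_\ve=\psi^{-1}\big(\{z\in U_\lambda:\ e_\lambda f\text{ is twice differentiable at }z\}\big)=\psi^{-1}(U_\lambda\setminus N)$; as homeomorphisms preserve density, this set is dense in $\psi^{-1}(U_\lambda)=\gph T_\ve$, proving \textbf{(i)}.

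\textbf{Parts (ii) and (iii).} For \textbf{(ii)}, subdifferential continuity lets us replace the $f$-attentive localization by a genuine neighborhood: if no ball $B((\bar x,\bar v),\delta)$ with $\delta\in(0,\ve]$ satisfied $\gph\partial f\cap B((\bar x,\bar v),\delta)\subseteq\gph T_\ve$, I could extract $(x_k,v_k)\xrightarrow{\gph\partial f}(\bar x,\bar v)$ with $f(x_k)\ge f(\bar x)+\ve$, contradicting $f(x_k)\to f(\bar x)$. Fixing such $\delta$ and setting $W:=B((\bar x,\bar v),\delta)$ gives $W\cap\gph\partial f=W\cap\gph T_\ve$, which is open in $\gph T_\ve$, so the density from \textbf{(i)} restricts to density of $\Omega_f\cap W$ in $W\cap\gph\partial f$. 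For \textbf{(iii)}, a proper l.s.c.\ convex $f$ has an affine minorant, hence is prox-bounded, and is subdifferentially continuous and $0$-level prox-regular at every $(\bar x,\bar v)\in\gph\partial f$ (see \cite{Rockafellar98}); applying \textbf{(ii)} at each such point and patching the resulting local density neighborhoods yields global density of $\Omega_f$ in $\gph\partial f$.

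\textbf{Main obstacle.} The substance is in \textbf{(i)}: realizing that one need only transport \emph{density} — a topological invariant preserved by the homeomorphism $\psi$ — rather than measure (the bad set $N$ of $e_\lambda f$ need not push forward to a null set), and assembling the identification $\gph T_\ve\cong U_\lambda$ correctly through Lemmas~\ref{prop:shear} and \ref{prop:prox-mor-pr}. The reduction to prox-boundedness and the ``$\mathcal C^{1,1}\Rightarrow$ twice differentiable a.e.'' step are the remaining technical points, and \textbf{(ii)}–\textbf{(iii)} are then soft topology.
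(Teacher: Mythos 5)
Your overall strategy (pass to the Moreau envelope, use generic classical twice differentiability of the $\mathcal{C}^{1,1}$ function $e_\lambda f$, and pull back through the localization via Corollary~\ref{cor:gtdtwicediff}) is the same as the paper's, and your treatment of parts \textbf{(ii)} and \textbf{(iii)}, as well as the explicit reduction to prox-boundedness (which the paper glosses over although Lemma~\ref{prop:prox-mor-pr} and Corollary~\ref{cor:gtdtwicediff} formally require it), is fine. The genuine gap is in part \textbf{(i)}, at the step where you declare $U_\lambda:=\rge(I+\lambda T_\ve)$ to be an \emph{open} neighborhood on which $e_\lambda f$ is of class $\mathcal{C}^{1,1}$ with $\nabla e_\lambda f=[\lambda I+T_\ve^{-1}]^{-1}$, and then run Rademacher on $U_\lambda$ and transport density through the homeomorphism $\psi$. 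Lemma~\ref{prop:prox-mor-pr} only guarantees these properties on a neighborhood of the single point $\bar{x}+\lambda\bar{v}$ (and, for a suitably chosen $T_\ve$, on the set $U_\lambda$, which is asserted to be a neighborhood of $\bar{x}+\lambda\bar{v}$ but not to be open). Your Rademacher argument produces twice-differentiability points that are dense in an \emph{open} set where $\nabla e_\lambda f$ is Lipschitz; at a pair $(x,v)\in\gph T_\ve$ whose image $z=x+\lambda v$ is not interior to that region, the nearby twice-differentiability points need not lie in $U_\lambda$, hence need not pull back into $\gph T_\ve$, and the a.e.\ statement says nothing about density within a possibly thin portion of $U_\lambda$ near $z$. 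So, as written, you only obtain approximability of pairs whose shear image lies in the interior of the known-good region around $\bar{x}+\lambda\bar{v}$, not density in all of $\gph T_\ve$.

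The missing idea — and the way the paper closes exactly this hole — is pointwise re-localization: by Proposition~\ref{prop:proxlan}, prox-regularity (with the same level $r$) propagates to \emph{every} $(x,v)\in\gph T_\ve$, so Lemma~\ref{prop:prox-mor-pr} can be applied afresh at each such $(x,v)$, giving $\mathcal{C}^{1,1}$-ness of $e_\lambda f$ and the gradient formula on an open neighborhood of $x+\lambda v$ with an $f$-attentive localization $T'_\ve$ around $(x,v)$ that (after shrinking, using the strictness of the defining inequalities of $\gph T_\ve$) sits inside $\gph T_\ve$. One then picks twice-differentiability points $z_k\to x+\lambda v$ of $e_\lambda f$ (the paper phrases this via nonemptiness of the Hessian bundle, equivalent to your a.e.\ argument), sets $v_k:=\nabla e_\lambda f(z_k)$, $x_k:=z_k-\lambda v_k$, and uses Lemma~\ref{lem:zkvkxk} and Theorem~\ref{prop:gtdtwicediff} to conclude $(x_k,v_k)\in\Omega_f\cap\gph T_\ve$ and $(x_k,v_k)\to(x,v)$. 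If you insert this re-localization at each $(x,v)\in\gph T_\ve$ in place of the single global identification $\gph T_\ve\cong U_\lambda$, your proof becomes correct; your patching argument for \textbf{(iii)} is then a legitimate (and slightly different) alternative to the paper's use of the global convex-case formula for $\nabla e_\lambda f$.
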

\begin{proof}
\textbf{(i)} By Proposition~\ref{prop:proxlan}, there exists an $f$-attentive $\ve$-localization $T_\ve$ of $\partial f$ around $(\bar{x},\bar{v})$ such that $f$ is prox-regular at $x$ for $v$ whenever $(x,v) \in \gph 
T_\ve$. Fix $(x,v) \in \gph T_\ve$ and deduce from  the prox-regularity of $f$ at $x$ for $v$ by using Lemma~\ref{prop:prox-mor-pr} that there exists $\lambda > 0$ so small that $e_{\lambda} f$ is of class $\mathcal{C}^{1,1}$ around $x + \lambda v$. Furthermore, we have on a neighborhood of $x + \lambda v$ that
\begin{equation}\label{eq:gradenv1}
\nabla e_{\lambda} f = [\lambda I + (T'_\ve)^{-1}]^{-1},
\end{equation}
for an $f$-attentive $\ve$-localization $T'_\ve$ of $\partial f$ around $(x,v)$. It follows from Proposition~\ref{prop:proxlan} that we can shrink $\gph T'_\ve$ to get $\gph T'_\ve\subset \gph T_\ve$.
By \cite[Theorem~13.52]{Rockafellar98}, the Hessian bundle $\overline{\nabla}^2 e_{\lambda}f (x + \lambda v)$ is nonempty. Pick any $H \in \overline{\nabla}^2 e_{\lambda}f (x + \lambda v)$ and find a sequence $z_k \to x + \lambda v$ such that $e_{\lambda} f$ is twice differentiable in the classical sense at $z_k$ and $\nabla^2 e_{\lambda} f (z_k) \to H$ as $k\to\infty$. Fix $k\in\N$ and define
$$
v_k := \nabla e_\lambda f(z_k), \quad x_k := z_k - \lambda v_k.
$$
By Lemma~\ref{lem:zkvkxk}, we have $v_k \in T'_\ve(x_k) \subset T_\ve(x_k)$ for all large $k$ with $v_k \to v$ and $x_k \to x$ as $k\to\infty$. Since $e_{\lambda} f$ is twice differentiable at $z_k$, 
Theorem~\ref{prop:gtdtwicediff} tells us that $f$ is generalized twice differentiable at $x_k$ for $v_k$ meaning that $(x_k,v_k) \in \Omega_f$. Therefore, the sequence $(x_k,v_k)$ belongs to the set $\gph T_\ve \cap \Omega_f$ for all large $k$ and converges to $(x,v)$. This readily implies that the set $\gph T_\ve \cap \Omega_f$ is dense in $\gph T_\ve$ as claimed.
\medskip

\textbf{(ii)} Assume in addition that $f: \R^n \to \overline{\R}$ is subdifferentially continuous at $\bar{x}$ for $\bar{v}$. Combining this with Proposition~\eqref{prop:proxlan} allows us to find a neighborhood $W$ of $(\bar{x},\bar{v})$ such that $f$ is prox-regular at $x$ for $v$ $(x,v) \in \gph T_\ve$. Then we come to the claimed conclusion by repeating the arguments in the proof of \textbf{(i)}.

\medskip

\textbf{(iii)} When $f$ is convex, we can utilize a stronger version of Lemma~\ref{prop:prox-mor-pr}, where \eqref{eq:gradenv1} holds everywhere with $T'_\ve$ replaced by $\partial f$; see \cite[Theorem~2.26]{Rockafellar98}.  This brings us to the desired conclusion and therefore completes the proof of the theorem.
\end{proof}

Our next goal is to study {\em epi-convergence} of sequences of generalized quadratic forms. Observe that if a sequence of linear subspaces $L_k$ converges, then it converges to a linear subspace $L$. Indeed, it follows from \cite[Exercise~4.14 and Proposition~4.15]{Rockafellar98} that $L$ is a convex cone. Since $L_k = -L_k$ for all $k$, we get $-L = L$ as claimed. The following proposition presents the facts needed in our subsequent analysis; its second part is also mentioned in \cite[Proposition~4.3]{roc24}.

\begin{Proposition}\label{prop:epiconv_gqf}  Let $\{q_k\}$ be a sequence of generalized quadratic forms. Then we have:
\begin{itemize}
\item[\rm\textbf{(i)}] We can extract a subsequence of $\{q_k\}$ that epigraphically converges to $q \not\equiv\infty$.
 
\item[\rm\textbf{(ii)}] Assume that $\{q_k\}$ epigraphically converges to $q$ and there exists a number $r \ge 0$ such that
\begin{equation}\label{eq:qkge-r}
q_k(w) \ge -r \|w\|^2,\quad w\in\R^n,  \end{equation}
for all sufficiently large $k$. Then $q$ is a generalized quadratic form  satisfying
$$ 
q(w) \ge-r\|w\|^2\;\mbox{ whenever }\;w\in\R^n.
$$
\end{itemize}
\end{Proposition}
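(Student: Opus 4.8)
The plan is to represent each generalized quadratic form via its associated subspace and symmetric matrix, and to control these data under epi-convergence.

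\textbf{Part (i).} Write $q_k = \tfrac12 q_{A_k} + \delta_{L_k}$ with $A_k$ symmetric and $L_k$ a subspace. First I would pass to a subsequence along which $L_k$ converges in the sense of set convergence; this is possible because the space of subspaces of $\R^n$ (equivalently, the Grassmannian, realized via orthogonal projectors $P_{L_k}$ living in a compact set of matrices) is compact, and by the remark preceding the proposition the limit is again a subspace $L$. The matrices $A_k$ need not be bounded, but the quadratic forms $q_k$ only see the restriction of $A_k$ to $L_k$, i.e. $P_{L_k}A_kP_{L_k}$. If $\{P_{L_k}A_kP_{L_k}\}$ has a bounded subsequence, extract a convergent one with limit a symmetric matrix $A$ whose relevant block lives on $L$, and check directly (using $P_{L_k}\to P_L$ and $\Delta$-quotient/epigraph arguments, or Attouch--Wets convergence of the epigraphs) that $q_k\xrightarrow{e} \tfrac12 q_A + \delta_L =: q$, which is a proper generalized quadratic form. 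If no such bounded subsequence exists, then $\|P_{L_k}A_kP_{L_k}\|\to\infty$ along every subsequence; I would then argue that the epigraphs of $q_k$ thin out so that the outer set-limit of $\{\epi q_k\}$ still contains the point $(0,0)$ (since $q_k(0)=0$ for all $k$), hence the lower epigraphical limit is a function not identically $+\infty$; and after a further subsequence one gets genuine epi-convergence to some $q\not\equiv\infty$ (possibly with $L$ collapsed to a proper subspace and $q=\delta_{L'}$ plus a quadratic piece). In all cases the subsequential epi-limit is nonvacuous.

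\textbf{Part (ii).} Now assume $q_k\xrightarrow{e} q$ and $q_k(w)\ge -r\|w\|^2$ for all large $k$. The lower bound immediately passes to the limit: for any $w$, Proposition~\ref{prop:epichar} supplies $w_k\to w$ with $q_k(w_k)\to q(w)$, and $q_k(w_k)\ge -r\|w_k\|^2\to -r\|w\|^2$, giving $q(w)\ge -r\|w\|^2$. In particular $q>-\infty$ everywhere, so $q$ is proper. It remains to show $q$ is a generalized quadratic form. Write again $q_k=\tfrac12 q_{A_k}+\delta_{L_k}$; the bound \eqref{eq:qkge-r} forces $\la x,A_kx\ra\ge -r\|x\|^2$ on $L_k$, so after using Lemma~\ref{lem:posdef} I may replace $A_k$ by a symmetric matrix, still denoted $A_k$, satisfying $\la x,A_kx\ra\ge -r\|x\|^2$ on \emph{all} of $\R^n$ while leaving $q_k$ unchanged. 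Then $A_k + rI \succeq 0$. Pass to a subsequence with $L_k\to L$ (a subspace). For the matrices: the forms $q_k+\tfrac r2\|\cdot\|^2$ are now nonnegative; if the relevant parts of $A_k$ stay bounded, extract $A_k+rI\to A'+rI\succeq 0$ and identify the epi-limit as $\tfrac12 q_{A}+\delta_L$; if they blow up, the epi-limit picks up an additional indicator of the subspace $\{x: \la x,(A_k+rI)x\ra \text{ bounded}\}$, shrinking $L$ to a smaller subspace $\tilde L$, and the limit is $\tfrac12 q_{A}+\delta_{\tilde L}$ for an appropriate symmetric $A$ — again a generalized quadratic form. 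By uniqueness of epi-limits this equals $q$, so $q$ has the required form, completing the proof.

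\textbf{Main obstacle.} The delicate point is the \emph{possible unboundedness of $A_k$} (equivalently, the quadratic forms becoming ``infinitely steep'' in some directions): one must show that the epigraphical limit nonetheless stays within the class of generalized quadratic forms rather than producing a non-quadratic convex function. The key is that an epi-limit of uniformly $(-r)$-bounded-below generalized quadratic forms, after the Lemma~\ref{lem:posdef} normalization, is an epi-limit of translates of nonnegative generalized quadratic forms, and the directions where steepness escapes to $+\infty$ exactly carve out the subspace on which the limit is $+\infty$; making this rigorous (e.g.\ via convergence of the associated projectors and a diagonalization simultaneously adapted to $L_k$) is where the real work lies.
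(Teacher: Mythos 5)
Your proposal contains a genuine gap, and it sits exactly where you yourself place the ``main obstacle.'' In both parts you reduce everything to the decomposition $q_k=\tfrac12 q_{A_k}+\delta_{L_k}$ and a case distinction on whether the restricted matrices stay bounded, but in the unbounded case you only \emph{assert} the conclusion: in (i) that ``after a further subsequence one gets genuine epi-convergence to some $q\not\equiv\infty$,'' and in (ii) that ``the epi-limit picks up an additional indicator of the subspace \dots and the limit is $\tfrac12 q_A+\delta_{\tilde L}$ for an appropriate symmetric $A$.'' Neither claim is proved, and the second one is precisely the statement of part (ii); the set $\{x:\la x,(A_k+rI)x\ra\;\text{bounded}\}$ is not even well defined as written since it depends on $k$. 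Making this rigorous would require, e.g., a spectral argument passing to subsequences along which dimensions, eigenvectors, and (finite or infinite) eigenvalue limits stabilize, and then verifying the two epi-convergence inequalities of Proposition~\ref{prop:epichar} for the candidate limit --- none of which appears in the proposal. (The pieces you do prove are fine: the passage of the bound $q_k\ge-r\|\cdot\|^2$ to the limit, the use of Lemma~\ref{lem:posdef} to extend the lower bound off $L_k$ --- up to a harmless factor of $2$, since $q_k=\tfrac12 q_{A_k}+\delta_{L_k}$ gives $\la x,A_kx\ra\ge-2r\|x\|^2$ on $L_k$ --- and the appeal to uniqueness of epi-limits.)

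The paper closes this gap by a route that avoids the matrix analysis altogether. For (i) it only uses $q_k(0)=0$: this gives $\big(\eliminf_{k\to\infty}q_k\big)(0)\le 0$, so the sequence does not escape epigraphically to the horizon, and the compactness property of epi-convergence (Rockafellar--Wets, Propositions~7.5 and 7.6) yields an epi-convergent subsequence with limit $\not\equiv\infty$; your case analysis in (i) could be repaired simply by citing this theorem, which also makes the bounded-case construction unnecessary. For (ii) the paper first gets $q(0)=0$, then shifts to $p_k:=q_k+r\|\cdot\|^2\ge 0$, which are \emph{convex} generalized quadratic forms epi-converging to $q+r\|\cdot\|^2$, and invokes Attouch's theorem (Theorem~12.35 in Rockafellar--Wets): epi-convergence of convex functions gives graphical convergence of $\partial p_k$ to $\partial q+2rI$. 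Since each $\gph\partial p_k$ is a linear subspace of $\R^n\times\R^n$ and set limits of subspaces are subspaces, $\gph\partial q$ is a subspace, which together with $q(0)=0$ identifies $q$ as a generalized quadratic form. This dual (subdifferential-graph) detour is the missing idea that handles the possible blow-up of the $A_k$; without it, or without the spectral argument sketched above, your proof of (ii) is incomplete.
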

\begin{proof}
\textbf{(i)} Since each function
$q_k$ is a generalized quadratic form, we have $q_k(0) = 0$ for all $k\in\N$. It follows from 
Proposition~\ref{prop:epichar} that 
$$
\Big( \eliminf\limits_{k \to \infty} q_k\Big)(0) \le \liminf\limits_{k \to \infty} q_k(0) = 0.
$$
Hence $\eliminf\limits_{k \to \infty} q_k \not\equiv\infty$, which shows that the sequence $\{q_k\}$ does not escape epigraphically to the horizon, i.e., $\epi q_k \not\rightarrow \emptyset$ by \cite[Proposition~7.5]{Rockafellar98}. This ensures by \cite[Proposition~7.6]{Rockafellar98} that $\{q_k\}$ contains a subsequence that epigraphically converges to a function $q \not\equiv\infty$.

\medskip

\textbf{(ii)} Let us show that the assumptions on $\{q_k\}$ and $q$ yield $q(0) = 0$. Indeed, for any $w_k \to 0$ we have
$$
\liminf\limits_{k \to \infty} q_k(w_k) \ge \liminf\limits_{k \to \infty} (-r\|w_k\|^2) = 0, 
$$
which implies that $q_k(w_k) = 0$ when $w_k = 0$, $k\in\N$, and thus $q(0)=0$ by Proposition~\ref{prop:epichar}. Define further the sequence of generalized quadratic forms $\{p_k\}$ by
$$
p_k(w) := q_k(w) + r\|w\|^2,\quad w\in\R^n.
$$
We obviously have that all $p_k$ are generalized quadratic form, and it follows from \eqref{eq:qkge-r} that $p_k(w) \ge 0$ whenever $w \in \R^n$. Therefore, each $p_k$ is convex for large $k$. Applying further \cite[Theorem~7.46(ii)]{Rockafellar98} ensures that the sequence $\{p_k\}$ epigraphically converges to $q + r\|\cdot\|^2$. Furthermore, \cite[Theorem~12.35]{Rockafellar98} tells us that $\{\partial p_k\}$ graphically converges to $\partial(q + r\|\cdot\|^2) = \partial q + 2rI$. Since each set $\gph \partial q_k$ is a linear subspace in $\R^n \times \R^n$ for all $k$, the limiting one $\gph(\partial q + 2rI)$ is also a subspace, and so is $\gph \partial q$. By $q(0) = 0$, this amounts to $q$ being a generalized quadratic form.
\end{proof}

Now we are in a position to present the definition of {\em quadratic bundles} of extended-real-valued functions, which was first introduced in \cite[p.\ 187]{roc} in order to work with convex functions. Here we revise this concept when moving to the general case. Note that when losing the innate subdifferential continuity of convex functions, we need to provide an appropriate modification to obtain a reasonable construction. In the original work, Rockafellar takes all the generalized quadratic forms that are epigraphical limits of a sequence of second-order subderivatives, where the primal-dual pair tends to a specified pair while making sure that the function is generalized twice differentiable all along. In the process of this work, we learn from the very recent manuscript \cite{roc24}, where a generalized version of quadratic bundles is proposed for nonconvex functions with incorporating the $f$-attentiveness. This change is crucial to produce effective results as given below.

\begin{Definition}\rm \label{defi:quadbund}  Let $f: \R^n \to \overline{\R}$ be a proper extended-real-valued function. The \textit{quadratic bundle} of $f$ at $\bar{x} \in \dom f$ for $\bar{v} \in \partial f(\bar{x})$, denoted by $\mathrm{quad} f (\bar{x}|\bar{v})$, is defined as the collection of generalized quadratic forms $q$ for which there exists $(x_k,v_k) \xrightarrow{\Omega_f} (\bar{x},\bar{v})$ such that $f(x_k) \to f(\bar{x})$ and the sequence of generalized quadratic forms $q_k = \frac{1}{2} d^2 f(x_k|v_k)$ converges epigraphically to $q$.
\end{Definition}

The following remark collects some important observations about quadratic bundles.

\begin{Remark}\label{prop:extractepi} \rm \textbf{}
\begin{itemize}
\item[\textbf{(i)}] When $f$ is convex (or more generally, when $f$ is subdifferentially continuous at $\bar{x}$ for $\bar{v}$), the revised definition of $\qua f(\bar{x}|\bar{v})$ agrees with the original construction by Rockafellar in \cite[p.\ 187]{roc}.

\item[\textbf{(ii)}] It is necessary to impose the additional condition $f(x_k) \to f(\bar{x})$ for the primal-dual sequence $\{(x_k,v_k)\}$, because the original construction fails to capture the strong variational convexity in the absence of subdifferential continuity. Various results and discussions in this direction are given in our forthcoming manuscript \cite{quadcharvar}.

\item[\textbf{(iii)}] In Definition~\ref{defi:quadbund}, along with the requirements imposed on the sequences  $\{(x_k,v_k)\}$, the functions $\dfrac{1}{2}d^2 f(x_k|v_k)$ need to be epigraphically convergent as well. Actually, as long as we can find a sequence of $(x_k,v_k)$  such that $(x_k,v_k) \xrightarrow{\Omega_f} (\bar{x},\bar{v})$ and  $f(x_k) \to f(\bar{x})$, we can assume without loss of generality that the functions $\dfrac{1}{2}d^2 f(x_k|v_k)$ epigraphically converge without any additional information. In other words, the construction of quadratic bundles relies solely on the sequences $\{(x_k,v_k)\}$ that fulfill the imposed requirements. This observation follows from Proposition~\ref{prop:epiconv_gqf}.

\item[\textbf{(iv)}] Suppose that $f: \R^n \to \overline{\R}$ is prox-regular at $\bar{x}$ for $\bar{v} \in \partial f(\bar{x})$. Then whenever $(x_k,v_k) \xrightarrow{\Omega_f} (\bar{x},\bar{v})$ and $f(x_k) \to f(\bar{x})$, we see that if the functions $\dfrac{1}{2}d^2 f(x_k|v_k)$ epigraphically converge, then they converge to a generalized quadratic form $q$. In other words, $\qua f(\bar{x}|\bar{v})$ contains all the possible epi-limits $q$ of $\dfrac{1}{2}d^2 f(x_k|v_k)$ with $(x_k,v_k) \xrightarrow{\Omega_f}(\bar{x},\bar{v})$ and $f(x_k) \to f(\bar{x})$ as $k\to\infty$. To verify this, suppose that $f: \R^n \to \overline{\R}$ is $r$-level prox-regular at $\bar{x}$ for $\bar{v}$ with some $r \ge 0$. It follows from Proposition~\ref{prop:proxlan} that $f$ is $r$-level prox-regular at $x$ for $v$ whenever $(x,v)$ belongs to the graph of an $f$-attentive $\ve$-localization $T_\ve$ of $\partial f$ around $(\bar{x},\bar{v})$.  Moreover, Corollary~\ref{cor:d2prox} tells us that 
$$
d^2 f(x|v)(w) \ge -r\|w\|^2\;\mbox{ whenever }\;w \in \R^n
$$ 
for all such pairs of $(x,v)$. Take further any sequences $(x_k,v_k) \xrightarrow{\Omega_f} (\bar{x},\bar{v})$ and $f(x_k) \to f(\bar{x})$ such that the functions $\frac{1}{2}d^2 f(x_k|v_k)$ epigraphically converge to $q$. Then $(x_k,v_k) \in \gph T_\ve$ for all $k$ large enough. Consequently, we have 
$$
d^2 f(x_k|v_k)(w) \ge -r\|w\|^2,\quad w \in \R^n.
$$ 
Applying Proposition~\ref{prop:epiconv_gqf}\textbf{(ii)} confirms that $q$ is a generalized quadratic form.

\item[\textbf{(v)}] If$f$ is generalized twice differentiable at $\bar{x}$ for $\bar{v}$, then
$$
\dfrac{1}{2} d^2 f(\bar{x}|\bar{v})  \in \qua f(\bar{x}|\bar{v}).
$$
Indeed, let $(x_k,v_k): = (\bar{x},\bar{v})$ for all $k$. It is clear that $f$ is generalized twice differentiable at $x_k$ for $v_k$ and that the sequence of $\frac{1}{2}d^2 f (x_k | v_k)$ converges epigraphically to $\frac{1}{2}d^2 f (\bar{x} | \bar{v})$, which readily shows that $\frac{1}{2}d^2 f(\bar{x}|\bar{v}) \in\qua f(\bar{x}|\bar{v})$. 
\end{itemize}
\end{Remark}

If $f$ is subdifferentially continuous at $\bar{x}$ for $\bar{v}$, then the requirement $f(x_k) \to f(\bar{x})$ in 
Definition~\ref{defi:quadbund} is superfluous. However, the following example shows that our modification in the absence of subdifferential continuity makes a difference. For convenience, we call the construction of quadratic bundles, where the addition condition $f(x_k) \to f(\bar{x})$ is not imposed, the ``old" quadratic bundle with the notation $\qua_o f$. This means that for any $(\bar{x},\bar{v}) \in \gph \partial f$, the ``old" $\qua_o f(\bar{x}|\bar{v})$ is defined as the collection of generalized quadratic forms $q$ for which there exists $(x_k,v_k) \xrightarrow{\Omega_f} (\bar{x},\bar{v})$ such that the sequence of generalized quadratic forms $q_k = \frac{1}{2} d^2 f(x_k|v_k)$ converges epigraphically to $q$. 

\begin{Example}\rm \label{exam:quadandquads} {\em Consider the function $f: \R \to \R$ defined by
$$
f(x): = \left\{\begin{array}{ll}
x^2,  & x \ge 0,  \\
1,  & x < 0.
\end{array}\right.
$$
Then $\partial f(0) = (-\infty,0]$, $f$ is variationally $2$-strongly convex at $0$ for $0$ $($in particular, it is prox-regular at $0$ for $0)$ but not subdifferentially continuous at $0$ for $0$. Moreover, we have
\begin{equation}\label{eq:quadandquads}
\qua_o f(0|0) = \{ q_{[0]},  q_{[1]}, \delta_{\{0\}}\}, \quad  \qua f(0|0) = \{q_{[1]}, \delta_{\{0\}}\}.
\end{equation}}
\end{Example} 
\begin{proof} First we check that the function $f$ is variationally $2$-strongly convex by Definition~\ref{defi:varconv}. Indeed, direct calculations bring us
$$
\widehat{\partial}f(x) = \partial f(x) = \left\{ \begin{array}{ll}
\{2x\},   & x > 0, \\
(-\infty,0],   & x = 0,\\
\{ 0\}, & x < 0.
\end{array} \right. 
$$
Take $\varepsilon:= 1/4$ together with the neighborhoods $U := (-1/2,1/2)$ of $0$ and $V := (-1/2,1/2)$ of $0$. Then
\begin{equation}\label{eq:fattentive-partialf}
(U_\varepsilon \times V) \cap \gph \partial f =\big\{ (0,v)\;\big|\;v \in (-1/2,0]\big\} \cup\big\{(u,2u)\;\big|\; u \in (0,1/4)\big\}. 
\end{equation}
Consider further the function $\widehat{f} : \R \to \R$ defined by
$$
\widehat{f}(x): =\left\{ \begin{array}{ll}
x^2,  & x \ge 0,  \\
x^2 - x,  & x < 0.
\end{array} \right. 
$$
It is straightforward to check that
$$
\widehat{\partial}\widehat{f}(x) = \partial \widehat{f}(x) = \left\{\begin{array}{ll}
\{2x\},   & x > 0, \\
\text{[}-1,0],   & x = 0,\\
\{ 2x - 1\}, & x < 0.
\end{array} \right. 
$$
Consequently, we have the expression
\begin{equation}\label{eq:fattentive-partialf2}
(U \times V) \cap \gph \partial \widehat{f} =\big\{ (0,v)\;\big|\;v \in (-1/2,0]\big\} \cup\big\{(u,2u)\;\big|\;u \in (0,1/4)\big\}. 
\end{equation}
It follows from \eqref{eq:fattentive-partialf} and \eqref{eq:fattentive-partialf2} that
$$
(U \times V) \cap \gph \partial \widehat{f} = (U_\varepsilon \times V) \cap \gph \partial f
$$ 
for the chosen $\varepsilon$, $U$, and $V$. Moreover, for all $x \ge 0$ we get $\widehat{f}(x) = f(x) = x^2$ and $\widehat{f} \le f$ on $U$. Since $\widehat{f}(x) - x^2$ is convex, the function $\widehat{f}(x)$ is $2$-convex, and thus $f$ is variationally $2$-strongly convex by definition. On the other hand, $f$ is not subdifferentially continuous at $0$ for $0$. To see this, take a sequence $\{(x_k,0)\}$ with $x_k \uparrow 0$, which fulfills the condition $(x_k,0) \xrightarrow{\gph \partial f}(0,0)$ while $f(x_k) \not\to f(0)$ as $k\to\infty$.\vspace*{0.05in} 

To proceed with the verification of our claims in this example, now we provide the calculations of the quadratic bundle and ``old" quadratic bundle of $f$ at $0$ for $0$. Let us first calculate $d^2 f(x|v)$ for all pairs $(x,v) \in \gph \partial f$. We see that
$$
\Delta_t^2 f(0|v)(w) = \dfrac{f(0 + tw) - f(0) - vtw}{\frac{1}{2}t^2}  = \left\{ \begin{array}{ll}
\dfrac{2-2tvw}{t^2},   & w < 0,  \\
2w^2 - \dfrac{2vw}{t},& w \ge 0,
\end{array} \right.
$$
for all $v \in (-\infty,0]$, all $t > 0$, and all $w \in \R^n$.
It easily follows that
$$
d^2 f(0|v)(w) = \liminf\limits_{\substack{t \downarrow 0\\ w' \to w}} \Delta_t^2 f(0|v)(w') = \left\{\begin{array}{ll}
\delta_{\{0\}}(w), &  v < 0,  \\ 
\delta_{[0,\infty)}(w) + 2w^2,   & v = 0, 
\end{array}  \right. 
$$
whenever $w\in\R^n$. Observe that $f$ is twice differentiable at all $x \in \R \setminus \{0\}$,  and so 
$$
d^2 f(x|f'(x))(w) = \left\{ \begin{array}{ll}
2w^2, & x > 0,  \\
 0, & x < 0,
\end{array}\right.
$$
for all $x \in \R \setminus \{0\}$ all $w \in \R$, and all $w\in\R$. Next we determine the set $\Omega_f$ from \eqref{eq:tapgtd}, which contains all the pairs $(x,v) \in \gph \partial f$ where $f$ is generalized twice differentiable at $x$ for $v$. To this end, observe that $f$ is not generalized twice differentiable at $0$ for $0$ because $d^2 f(0|0)$ is not a generalized quadratic form. Since $f$ is twice differentiable at all $x \ne 0$, $f$ is also generalized twice differentiable at $x$ for $f'(x)$ for all $x \ne 0$. When $v < 0$,  Proposition~\ref{prop:epichar} tells us that
$$
\Big(\elimsup\limits_{t \downarrow 0} \Delta_t^2 f(0|v)\Big)(w) = \delta_{\{0\}}(w),\quad w\in\R^n,
$$
which entails the twice epi-differentiability of $f$ at $0$ for such $v$. Since the function $d^2 f(0|v)$ is also a generalized quadratic form, $f$ is generalized twice differentiable at $0$ for $v$ for all $v < 0$. Therefore, the set $\Omega_f$ in \eqref{eq:tapgtd} is calculated by
$$
\Omega_f =\big\{ (0,v)\;\big|\; v < 0\big\}\cup\big\{ (x,f'(x))\;\big|\;x \ne 0\big\}. 
$$
To determine further the ``old" quadratic bundles, pick $q \in \qua_o f(0|0)$ and find $(x_k,v_k) \xrightarrow{\Omega_f} (0,0)$ and $\dfrac{1}{2}d^2 f(x_k|v_k) \xrightarrow{e} q$ as $k\to\infty$. Consider the following three possibilities:
\begin{itemize}
\item[\textbf{(i)}] If $x_k = 0$ for infinitely many indexes of $k$, then there exists a subsequence $k_m$ such that $x_{k_m} = 0$ for all $m$, and so $v_{k_m} < 0$ for all $m$. Consequently, 
$$
\dfrac{1}{2}d^2 f(x_{k_m}|v_{k_m}) = \delta_{\{0\}} \xrightarrow{e} \delta_{\{0\}},
$$
and hence we have $q = \delta_{\{0\}}$.

\item[\textbf{(ii)}] If $x_k = 0$ for finitely many indexes and there are infinitely many indexes $k$ with $x_k > 0$, take a subsequence $\{k_m\}$ such that $x_{k_m} > 0$. Then $v_{k_m} = f'(x_{k_m})$ and $\dfrac{1}{2}d^2 f(x_{k_m}|v_{k_m}) = q_{[1]} \xrightarrow{e} q_{[1]}$, which  therefore yield $q = q_{[1]}$.

\item[\textbf{(iii)}] If $x_k \ge 0$ for finitely many indexes and $x_k < 0$ for infinitely many indexes, then there exists a subsequence $\{k_m\}$ such that $x_{k_m} < 0$ for all $m$. Then $v_{k_m} = f'(x_{k_m})$ and $\dfrac{1}{2}d^2 f(x_{k_m}|v_{k_m}) = q_{[0]} \xrightarrow{e} q_{[0]}$, which verify that $q = q_{[0]}$. 
\end{itemize}
Combining all the above allows us to conclude that
$$
\qua_o f(0|0) \subset \{ \delta_{\{0\}}, q_{[0]}, q_{[1]} \}.
$$
It is straightforward to check the reverse inclusion, which shows that the first part of \eqref{eq:quadandquads} holds. Finally, note that if we impose in addition the condition $f(x_k) \to f(0)$, then it follows that $x_k \ge 0$ for all large $k$. Then we continue with the arguments similar to  the parts \textbf{(i)} and \textbf{(ii)}, which brings us to the inclusion
$$
\qua f(0|0) \subset\big\{ \delta_{\{0\}}, q_{[1]}\big\}. 
$$
The opposite inclusion can be checked easily, which readily verifies the second part of \eqref{eq:quadandquads}.
\end{proof}

Our next result establishes a {\em sum rule} for quadratic bundles.

\begin{Theorem}\label{prop:sumrulequad}
Assume that $f: \R^n \to \overline{\R}$ is $\mathcal{C}^2$-smooth around $\bar{x}$ and that $g: \R^n \to \overline{\R}$ is proper l.s.c.\ Then for any $\bar{v} \in \partial g(\bar{x})$, we have $\nabla f(\bar{x}) + \bar{v} \in \partial (f+g)(\bar{x})$ and
\begin{equation*}
\qua (f+g)(\bar{x}|\nabla f(\bar{x}) + \bar{v}) = \frac{1}{2} q_{\nabla^2 f(\bar{x})} + \qua g(\bar{x} | \bar{v}). 
\end{equation*}
In particular, the quadratic bundle of $f$ is calculated by
$$
\qua f(\bar{x}|\nabla f(\bar{x})) = \Big\{ \frac{1}{2} d^2 f(\bar{x}|\nabla f (\bar{x}))\Big\} = \Big\{\frac{1}{2} q_{\nabla^2 f(\bar{x})}\Big\}.
$$
\end{Theorem}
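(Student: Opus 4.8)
The plan is to reduce the identity to two ingredients already available: the local correspondence between $\gph\partial(f+g)$ and $\gph\partial g$ induced by the smooth shift $v\mapsto v+\nabla f(x)$, and the behaviour of second subderivatives and of generalized twice differentiability under $\mathcal{C}^2$ additions recorded in Proposition~\ref{prop:sumC2gendif}. First I would fix a neighborhood $U$ of $\bar x$ on which $f$ is $\mathcal{C}^2$ (hence finite, $\mathcal{C}^1$, strictly differentiable and classically twice differentiable at each of its points); then $\partial(f+g)(x)=\nabla f(x)+\partial g(x)$ for all $x\in U$, so in particular $\nabla f(\bar x)+\bar v\in\partial(f+g)(\bar x)$, and the map $\Phi(x,v):=(x,\,v+\nabla f(x))$ is a homeomorphism from $\gph\partial g\cap(U\times\R^n)$ onto $\gph\partial(f+g)\cap(U\times\R^n)$ taking $(\bar x,\bar v)$ to $(\bar x,\nabla f(\bar x)+\bar v)$. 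Applying Proposition~\ref{prop:sumC2gendif} at each $x\in U$ (and, for the converse direction, with the roles of $f$ and $g$ played by $-f$ and $f+g$) shows that $\Phi$ also carries $\Omega_g\cap(U\times\R^n)$ onto $\Omega_{f+g}\cap(U\times\R^n)$, and that along this correspondence $\tfrac12 d^2(f+g)\big(x\,\big|\,v+\nabla f(x)\big)=\tfrac12 q_{\nabla^2 f(x)}+\tfrac12 d^2 g(x\,|\,v)$ for every $x\in U$ and $v\in\partial g(x)$.

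Next I would prove the inclusion $\tfrac12 q_{\nabla^2 f(\bar x)}+\qua g(\bar x\,|\,\bar v)\subseteq\qua(f+g)(\bar x\,|\,\nabla f(\bar x)+\bar v)$. Given $q\in\qua g(\bar x\,|\,\bar v)$, choose $(x_k,v_k)\xrightarrow{\Omega_g}(\bar x,\bar v)$ with $g(x_k)\to g(\bar x)$ and $\tfrac12 d^2 g(x_k\,|\,v_k)\xrightarrow{e} q$, and set $w_k:=v_k+\nabla f(x_k)$. By the first paragraph $(x_k,w_k)\xrightarrow{\Omega_{f+g}}(\bar x,\nabla f(\bar x)+\bar v)$, and continuity of $f$ gives $(f+g)(x_k)\to(f+g)(\bar x)$. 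Since $f\in\mathcal{C}^2$, the Hessians satisfy $\nabla^2 f(x_k)\to\nabla^2 f(\bar x)$, so $\tfrac12 q_{\nabla^2 f(x_k)}$ converges continuously to $\tfrac12 q_{\nabla^2 f(\bar x)}$; combining this with $\tfrac12 d^2 g(x_k\,|\,v_k)\xrightarrow{e} q$ through \cite[Theorem~7.46(b)]{Rockafellar98} yields $\tfrac12 d^2(f+g)(x_k\,|\,w_k)=\tfrac12 q_{\nabla^2 f(x_k)}+\tfrac12 d^2 g(x_k\,|\,v_k)\xrightarrow{e}\tfrac12 q_{\nabla^2 f(\bar x)}+q$. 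Hence $\tfrac12 q_{\nabla^2 f(\bar x)}+q\in\qua(f+g)(\bar x\,|\,\nabla f(\bar x)+\bar v)$, which gives the inclusion.

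For the reverse inclusion I would apply the inclusion just proved with $f$ replaced by $-f$ and $g$ replaced by $f+g$ (which agrees near $\bar x$ with a proper l.s.c.\ function, and quadratic bundles depend only on local data); since $(-f)+(f+g)=g$, $\nabla(-f)(\bar x)=-\nabla f(\bar x)$, and $q_{-\nabla^2 f(\bar x)}=-q_{\nabla^2 f(\bar x)}$, this produces $-\tfrac12 q_{\nabla^2 f(\bar x)}+\qua(f+g)(\bar x\,|\,\nabla f(\bar x)+\bar v)\subseteq\qua g(\bar x\,|\,\bar v)$, which rearranges to the missing inclusion. Finally, for the ``in particular'' statement I would take $g\equiv0$: then $\partial g(\bar x)=\{0\}$, $\Omega_0=\gph\partial 0=\R^n\times\{0\}$, and every $d^2 0(x_k\,|\,0)$ is the zero form, so $\qua 0(\bar x\,|\,0)$ is the singleton consisting of the zero quadratic form, whence $\qua f(\bar x\,|\,\nabla f(\bar x))=\{\tfrac12 q_{\nabla^2 f(\bar x)}\}$; equality with $\{\tfrac12 d^2 f(\bar x\,|\,\nabla f(\bar x))\}$ is immediate since $d^2 f(\bar x\,|\,\nabla f(\bar x))(w)=\la w,\nabla^2 f(\bar x)w\ra$ (one may equally argue directly: $\mathcal{C}^1$-smoothness near $\bar x$ forces $v_k=\nabla f(x_k)$ for any admissible sequence, $\mathcal{C}^2$-smoothness forces $(x_k,v_k)\in\Omega_f$ and $\tfrac12 d^2 f(x_k\,|\,v_k)=\tfrac12 q_{\nabla^2 f(x_k)}\xrightarrow{e}\tfrac12 q_{\nabla^2 f(\bar x)}$).

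I expect the main obstacle to be the bookkeeping in the first paragraph, namely verifying that $\Phi$ simultaneously matches $\Omega_g$ with $\Omega_{f+g}$ and realizes the second-subderivative identity: Proposition~\ref{prop:sumC2gendif} is stated at a single point and in one direction, so it must be invoked pointwise over $U$ and also in the ``$-f$'' form to obtain an honest equivalence, and one must be careful that the attentiveness conditions $g(x_k)\to g(\bar x)$ and $(f+g)(x_k)\to(f+g)(\bar x)$ transfer (this is exactly where continuity of the smooth summand is used, and why the revised, $f$-attentive Definition~\ref{defi:quadbund} of the quadratic bundle is the right one for a smooth sum rule to hold). Once these identifications are in place, the epi-convergence step (continuous convergence plus epi-convergence) is routine.
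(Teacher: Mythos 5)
Your proposal is correct and follows essentially the same route as the paper: both arguments rest on Proposition~\ref{prop:sumC2gendif} together with the epi-plus-continuous convergence result \cite[Theorem~7.46]{Rockafellar98}, and both obtain the second inclusion from the first by swapping in $-f$ and $f+g$. The only (immaterial) difference is that you prove the inclusion $\frac{1}{2}q_{\nabla^2 f(\bar{x})}+\qua g(\bar{x}|\bar{v})\subset\qua(f+g)(\bar{x}|\nabla f(\bar{x})+\bar{v})$ directly and deduce the other one, whereas the paper argues in the mirror-image order.
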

\begin{proof} We easily get (see, e.g., \cite[Proposition~1.30]{Mordukhovich18}) that $\nabla f(\bar{x}) + \bar{v} \in \partial(f + g)(\bar{x})$. Pick 
$$
q \in \qua (f+g)(\bar{x}|\nabla f(\bar{x}) + \bar{v})
$$ 
and find $(x_k,v_k) \xrightarrow{\Omega_{f+g}} (\bar{x},\nabla f(\bar{x}) + \bar{v})$ with $f(x_k) + g(x_k) \to f(\bar{x}) + g(\bar{x})$ and  $\frac{1}{2}d^2 (f+g)(x_k | v_k) \xrightarrow{e} q$ as $k\to\infty$. Since $v_k \in \partial (f+g)(x_k)$, the function $f$ is $\mathcal{C}^2$-smooth around $x_k$ for large $k$, and so we get $v_k - \nabla f(x_k) \in \partial g(x_k)$. Remembering that $f + g$ is generalized twice differentiable at $x_k$ for $v_k$ and that $f$ is twice differentiable at $x_k$ for large $k$, we apply Proposition~\ref{prop:sumC2gendif} to conclude that $g = (f + g) + (-f)$ is generalized twice differentiable at $x_k$ for $v_k - \nabla f(x_k)$ and that
\begin{equation}\label{eq:quadsumC2gen3}
d^2 g(x_k|v_k - \nabla f(x_k)) = d^2 (f + g)(x_k|v_k) - \dfrac{1}{2}q_{\nabla^2 f(x_k)}.
\end{equation}
Hence $(x_k,v_k - \nabla f(x_k)) \xrightarrow{\Omega_g} (\bar{x},\bar{v})$ as $k\to\infty$. Since $d^2(f+g)(x_k| v_k) \xrightarrow{e} q$ and $-\frac{1}{2} q_{\nabla^2 f(x_k)} \xrightarrow{c} -\frac{1}{2} q_{\nabla^2 f(\bar{x})}$, it follows from \cite[Theorem~7.46]{Rockafellar98} that
$$
d^2 g(x_k | v_k - \nabla f(x_k)) \xrightarrow{e} q - \frac{1}{2} q_{\nabla^2 f(\bar{x})}.
$$
Furthermore, we see that $g(x_k) \to g(\bar{x})$ and thus deduce from Definition~\ref{defi:quadbund} that $q -\frac{1}{2}q_{\nabla^2 f(\bar{x})} \in \qua g(\bar{x} | \bar{v})
$. In other words, this means that
$$
q \in \dfrac{1}{2}q_{\nabla^2 f(\bar{x})} + \qua g(\bar{x}|\bar{v}),
$$
which readily brings us to the inclusion
\begin{equation}\label{eq:quadsumC2gen1}
\qua (f+g)(\bar{x} | \bar{v} + \nabla f(\bar{x})) \subset \frac{1}{2} q_{\nabla^2 f(\bar{x})} + \qua g(\bar{x} | \bar{v}).
\end{equation}
Take $q$ such that $q \in \frac{1}{2} q_{\nabla^2 f(\bar{x})} + \qua g(\bar{x} | \bar{v})$ telling us that 
\begin{equation}\label{eq:quadsumC2gen2}
q - \frac{1}{2} q_{\nabla^2 f(\bar{x})} \in \qua g(\bar{x} | \bar{v}).
\end{equation}
Representing $g =  (-f) + (f+g)$, we deduce from \eqref{eq:quadsumC2gen2} and \eqref{eq:quadsumC2gen1} that
$$
q - \frac{1}{2} q_{\nabla^2 f(\bar{x})} \in -\frac{1}{2}q_{\nabla^2 f(\bar{x})} + \qua (f+g)(\bar{x} | \bar{v} + \nabla f(\bar{x})),
$$
The latter can be rewritten in the form
\begin{equation*}
 \frac{1}{2} q_{\nabla^2 f(\bar{x})} + \qua g(\bar{x} | \bar{v}) \ra \subset  \qua (f+g)(\bar{x} | \bar{v} + \nabla f(\bar{x})),
\end{equation*}
which completes verifying the claimed sum rule. 
\end{proof}

It is noteworthy that if $f$ is merely twice differentiable at $\bar{x}$ in the classical sense (but not $\mathcal{C}^2$-smooth), then it does not guarantee that $\qua f(\bar{x}|\nabla f(\bar{x}))$ be a singleton. Moreover, the quadratic bundle can consist of uncountable many elements. The following example justifies this observation.

\begin{Example}\label{exam:quadtwicediff}
Consider the function $f: \R \to \R$ defined by
$$
f(x): = \left\{  \begin{array}{ll}
x^4 \sin (1/x), & x \ne 0,  \\
 0, & x = 0.
\end{array}\right. 
$$
Then $f$ is twice differentiable at $0$ in the classical sense, but
\begin{equation}\label{eq:quadexam1}
\qua f(0|0) = \left\{ q_{[a]} \mid a \in [-1/2,1/2] \right\}.
\end{equation}
\end{Example}
\begin{proof} While $f'(0) = f''(0) = 0$, for all $x \ne 0$ we have
$$
f'(x) = x^2\big(-\cos(1/x) + 4 x \sin(1/x)\big), \quad f''(x) =  -6 x \cos(1/x) + (-1 + 12 x^2) \sin(1/x).
$$
Furthermore, it follows from the construction of $f$ that
$$
d^2 f(x|f'(x))(w) = f''(x)w^2\;\mbox{ for all }\;x \in \R\;\mbox{ and }\; w \in \R. 
$$
As $f''$ is bounded around $0$, for any sequence $\{x_k\}$ we have that the epi-convergence of the quadratic functions $f_k(w) = \frac{1}{2}f''(x_k)w^2$ is equivalent to their pointwise convergence. Indeed, assume that $f_k(w) = \frac{1}{2}f''(x_k)w^2$ epigraphically converges to $f$. Fix any $w \in \R^n$ and find by Proposition~\ref{prop:epichar} a sequence $w_k \to w$ such that $f_k(w_k) \to f(w)$. Consequently, we have
$$
\begin{aligned}
|f_k(w) - f(w)| &\le |f_k(w) - f_k(w_k)| + |f_k(w_k) - f(w)|\\
&= \dfrac{1}{2}\left|f''(x_k)\right|.|w_k - w|.|w_k + w| +  |f_k(w_k) - f(w)| \to 0,
\end{aligned}
$$
which entails the pointwise convergence of $f_k$ to $f$. Reversely, suppose that the sequence of $f_k(w) = \frac{1}{2}f''(x_k)w^2$ pointwise converges to $f$. Choose any $w \ne 0$ to see that $f''(x_k) \to a$ for some $a \in \R$ and thus $f(w) = \frac{1}{2}aw^2$.  It is easily to deduce from Proposition~\ref{prop:epichar} $f_k$ epigraphically converges to $f$. Therefore, all the epigraphical limits of such sequences must be quadratic forms $q_{[a]}$, where $\frac{1}{2}f''(x_k) \to a$.\vspace*{0.03in}

To proceed further, fix any $a \in [-1/2,1/2]$ and take $x_k \to 0$ such that $\sin(1/x_k) = -2a$ for all $k$. Then
$$
f''(x_k) = -6x_k \cos(1/x_k) + (-1 + 12x_k^2) \sin(1/x_k) \to 2a,
$$
which shows that the function $q_{[a]}$ belongs to $\qua f(0|0)$. Pick any $q \in \qua f(0|0)$ and deduce from the twice differentiable of $f$ in the classical sense on $\R$ and the boundedness of $f''$ around $0$ that there exists a sequence $x_k \to 0$ such that the sequence of $q_k = \frac{1}{2}q_{[f''(x_k)]}$ epigraphically converges to $q_{[a]}$, $a \in \R$. It follows from our observation above that $\frac{1}{2}f''(x_k) \to a$. Therefore,
$$
2|a| = \limsup |f''(x_k)| \le \limsup \left( 6|x_k|\cdot|\cos(1/x_k)| + 12x_k^2.|\sin(1/x_k)| + |\sin(1/x_k)| \right) \le 1,
$$
and so $a \in [-1/2,1/2]$. This confirms that the quadratic bundle of $f$ at $0$ for $0$ is calculated  by \eqref{eq:quadexam1}. 
\end{proof}

As shown by Example~\ref{exam:quadtwicediff}, the merely twice differentiability of a function everywhere is not sufficient to conclude that the quadratic bundle reduces to the quadratic form associated with the Hessian. To reach such a conclusion in \eqref{eq:quadexam1}, we converted the epi-convergence of quadratic forms into the pointwise convergence of the corresponding Hessians. This suggests that there might be a connection between the quadratic bundle and the Hessian bundle defined as in \eqref{eq:hessianbundle} for classes of functions where the second-order subderivative $d^2 f(x|v)$ is finite. Indeed, the next theorem reveals the relationship between the Hessian bundle of $f$ and the quadratic bundles of $f$ for functions of class $\mathcal{C}^{1,1}$.

\begin{Theorem} Let $f: \R^n \to \overline{\R}$ be an l.s.c.\ function with $\bar{x} \in \dom f$ and $\bar{v} \in \partial f(\bar{x})$. Suppose that $f$ is subdifferentially continuous at $\bar{x}$ for $\bar{v}$. Then we have
\begin{equation}\label{eq:hessianquad}
\qua f(\bar{x}|\bar{v} ) \supset \Big\{ \frac{1}{2}q_H\;\Big|\; H \in \overline{\nabla}^2 f(\bar{x})\Big\}. 
\end{equation}
If moreover $f$ is of class $\mathcal{C}^{1,1}$ around $\bar{x}$ then \eqref{eq:hessianquad} holds as an equality.
\end{Theorem}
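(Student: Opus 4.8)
The plan is to prove the two inclusions in \eqref{eq:hessianquad} separately, treating the general inclusion $\supset$ first and then, under the extra $\mathcal{C}^{1,1}$ hypothesis, the reverse one. A preliminary fact I would record at the outset and reuse is that a sequence of ordinary quadratic forms $q_{A_k}$ with symmetric matrices $A_k\to A$ converges continuously, hence epigraphically, to $q_A$: for any $w_k\to w$ one has $\la w_k,A_kw_k\ra\to\la w,Aw\ra$, which yields both conditions of Proposition~\ref{prop:epichar} (the liminf inequality along every $w_k\to w$, the limsup inequality along the constant sequence $w_k\equiv w$); this is the multidimensional version of the computation carried out in Example~\ref{exam:quadtwicediff}.

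For the inclusion $\supset$, I would fix $H\in\bnab^2 f(\bar{x})$, choose as in \eqref{eq:hessianbundle} a sequence $x_k\to\bar{x}$ at which $f$ is twice differentiable in the classical sense with $\nabla^2 f(x_k)\to H$, and set $v_k:=\nabla f(x_k)\in\partial f(x_k)$. Since classical twice differentiability at $x_k$ already entails generalized twice differentiability (as recorded right after Definition~\ref{defi:quaddiff}), $f$ is twice epi-differentiable at $x_k$ for $v_k$ with $d^2 f(x_k|v_k)=q_{\nabla^2 f(x_k)}$, so $(x_k,v_k)\in\Omega_f$. Because the $x_k$ eventually lie in a neighborhood on which $f$ is differentiable, continuity of $\nabla f$ there gives $v_k\to\bar{v}$, whence $(x_k,v_k)\xrightarrow{\Omega_f}(\bar{x},\bar{v})$; subdifferential continuity of $f$ at $\bar{x}$ for $\bar{v}$ then forces $f(x_k)\to f(\bar{x})$. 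Since $\frac{1}{2}d^2 f(x_k|v_k)=\frac{1}{2}q_{\nabla^2 f(x_k)}\xrightarrow{e}\frac{1}{2}q_H$ by the preliminary fact, Definition~\ref{defi:quadbund} delivers $\frac{1}{2}q_H\in\qua f(\bar{x}|\bar{v})$.

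For the reverse inclusion when $f\in\mathcal{C}^{1,1}$ around $\bar{x}$, let $U$ be a neighborhood of $\bar{x}$ on which $\nabla f$ is Lipschitz with modulus $L$; then $\partial f(\bar{x})=\{\nabla f(\bar{x})\}$, so $\bar{v}=\nabla f(\bar{x})$. Given $q\in\qua f(\bar{x}|\bar{v})$, I would pick $(x_k,v_k)\xrightarrow{\Omega_f}(\bar{x},\bar{v})$ with $f(x_k)\to f(\bar{x})$ and $\frac{1}{2}d^2 f(x_k|v_k)\xrightarrow{e}q$. For large $k$ one has $x_k\in U$, hence $v_k=\nabla f(x_k)$, and Theorem~\ref{prop:d2finite}(ii) turns the generalized twice differentiability of $f$ at $x_k$ for $v_k$ (valid since $(x_k,v_k)\in\Omega_f$) into classical twice differentiability of $f$ at $x_k$, so $d^2 f(x_k|v_k)=q_{\nabla^2 f(x_k)}$. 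Theorem~\ref{prop:d2finite}(i) bounds $|\la w,\nabla^2 f(x_k)w\ra|\le L\|w\|^2$ for all $w$, hence $\|\nabla^2 f(x_k)\|\le L$; passing to a subsequence gives $\nabla^2 f(x_k)\to H$ for some symmetric $H$, which lies in $\bnab^2 f(\bar{x})$ by \eqref{eq:hessianbundle}. Along that subsequence $\frac{1}{2}d^2 f(x_k|v_k)=\frac{1}{2}q_{\nabla^2 f(x_k)}\xrightarrow{e}\frac{1}{2}q_H$, while it still epi-converges to $q$, so uniqueness of epigraphical limits yields $q=\frac{1}{2}q_H$. This gives $\qua f(\bar{x}|\bar{v})\subset\{\frac{1}{2}q_H\mid H\in\bnab^2 f(\bar{x})\}$, and combined with the first part, equality.

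The routine steps (epi-convergence of quadratic forms, the uniform Hessian bound from Theorem~\ref{prop:d2finite}(i), the extraction of a convergent subsequence, and the stability of epi-limits under subsequences) are short. The delicate point is the bookkeeping in the $\supset$ direction: one must guarantee that the gradients $\nabla f(x_k)$ at the twice-differentiability points generating a chosen $H\in\bnab^2 f(\bar{x})$ actually converge to the prescribed subgradient $\bar{v}$ (membership $(x_k,\nabla f(x_k))\in\Omega_f$ being immediate). Under the $\mathcal{C}^{1,1}$ assumption of the second assertion this is automatic from continuity of $\nabla f$ together with $\partial f(\bar{x})=\{\nabla f(\bar{x})\}$, which is why the equality statement stands on firm ground; in the purely subdifferentially continuous setting the inclusion is to be understood along sequences compatible with $(\bar{x},\bar{v})$, i.e.\ with $\nabla f(x_k)\to\bar{v}$, which is precisely the situation in which $\frac{1}{2}q_H$ can be realized as an epigraphical limit of second-order subderivatives based at $(\bar{x},\bar{v})$.
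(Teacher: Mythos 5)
Your proposal follows essentially the same route as the paper's proof: the forward inclusion is obtained by running the generating sequence of the Hessian bundle through $d^2f(x_k|\nabla f(x_k))=q_{\nabla^2 f(x_k)}$ and the epi-convergence of quadratic forms with convergent matrices, and the reverse inclusion under $\mathcal{C}^{1,1}$ uses Theorem~\ref{prop:d2finite}(ii) to upgrade generalized to classical twice differentiability at the $x_k$. The one place you genuinely diverge is the endgame of the reverse inclusion: you bound $\|\nabla^2 f(x_k)\|\le L$ via Theorem~\ref{prop:d2finite}(i), extract a convergent subsequence $\nabla^2 f(x_{k_j})\to H$, and identify $q=\frac12 q_H$ by uniqueness of epigraphical limits, whereas the paper first argues that $q$ must be an ordinary quadratic form $\frac12 q_H$ (via finiteness and prox-regularity of $\mathcal{C}^{1,1}$ functions) and then proves $\la w,(\nabla^2 f(x_k)-H)w\ra\to 0$ for all $w$, testing against $e_i$ and $e_i+e_j$ to get entrywise convergence of the whole sequence. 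Your version is slightly more economical, since membership in $\bnab^2 f(\bar{x})$ only requires a subsequence. Finally, the ``delicate point'' you flag in the forward direction is real and worth emphasizing: the definition \eqref{eq:hessianbundle} of $\bnab^2 f(\bar{x})$ does not control the gradients $\nabla f(x_k)$, so neither $(x_k,\nabla f(x_k))\xrightarrow{\Omega_f}(\bar{x},\bar{v})$ nor the applicability of subdifferential continuity (which presupposes $\nabla f(x_k)\to\bar{v}$) is automatic for a general l.s.c.\ subdifferentially continuous $f$; the paper's proof silently makes the same assumption when it asserts that $f(x_k)\to f(\bar{x})$ ``automatically'' holds. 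Under the $\mathcal{C}^{1,1}$ hypothesis this is harmless, and your explicit acknowledgment that the general inclusion must be read along sequences with $\nabla f(x_k)\to\bar{v}$ is an honest and accurate caveat rather than a defect of your argument relative to the paper's.
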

\begin{proof} Pick $H \in \overline{\nabla}^2 f(\bar{x})$ and find a sequence $\{x_k\}$ such that $f$ is twice differentiable at $x_k$ in the classical sense for all $k$, $x_k \to \bar{x}$, and $\nabla^2 f(x_k) \to H$ as $k\to\infty$. This implies that $\{q_{\nabla^2 f(x_k)}\}$ epigraphically converges to $q_H$.  Since $f$ is subdifferentially continuous at $\bar{x}$ for $\bar{v}$, the condition $f(x_k) \to f(\bar{x})$ automatically satisfies. It follows from \cite[Proposition~13.8]{Rockafellar98} that $\frac{1}{2}d^2 f(x_k|\nabla f(x_k)) = \frac{1}{2}q_{\nabla^2 f(x_k)}$, and so
$$
\frac{1}{2}q_H \in \qua f(\bar{x}|\bar{v})\;\mbox{ as }\;k\to\infty.
$$
Now assume that $f$ is of class $\mathcal{C}^{1,1}$ around $\bar{x}$. Taking any $q \in \qua f(\bar{x}|\nabla f(\bar{x}))$, find a sequence $(x_k,v_k) \xrightarrow{\Omega_f} (\bar{x},\nabla f(\bar{x}))$ such that $\frac{1}{2}d^2 f(x_k|v_k) \xrightarrow{e} q$, which ensures that $f$ is generalized twice differentiable at $x_k$ for $v_k$ whenever $k\in\N$. Proposition~\ref{prop:d2finite} tells us that $f$ is twice differentiable at $x_k$ in the classical sense, and then we get by \cite[Proposition~13.8]{Rockafellar98} that
$$\frac{1}{2}d^2 f(x_k|v_k)(w) = \frac{1}{2} \la w, \nabla^2 f(x_k)w \ra\;\mbox{ for all }\;w\in\R^n.
$$
The $\mathcal{C}^{1,1}$ property of $f$ around $\bar{x}$ yields the boundedness of $\nabla^2 f$ around this point, while 
\cite[Theorem~13.51]{Rockafellar98} ensures that $\nabla^2 f(x_k)$ is symmetric. Also it easily follows from Proposition~\ref{prop:epichar} that $q$ is finite everywhere. Since functions of class $\mathcal{C}^{1,1}$ are prox-regular, we deduce from Remark~\ref{prop:extractepi}\textbf{(ii)} that $q$ is an ordinary quadratic form denoted by $\frac{1}{2}q_H$, where the matrix $H$ is symmetric. It remains to show that 
$\{\nabla^2 f(x_k)\}$ converges to $H$, which ensures that $H \in \overline{\nabla}^2 f(\bar{x})$. Indeed, since $\frac{1}{2}q_{\nabla^2 f(x_k)} \xrightarrow{e} \frac{1}{2}q_H$, we deduce from Proposition~\ref{prop:epichar} that there exists a sequence $w_k \to w$ with 
$$
\frac{1}{2} \la w_k, \nabla^2 f(x_k)w_k \ra = q_{\nabla^2 f(x_k)}(w_k)\to q_H(w) = \frac{1}{2}\la w, Hw \ra,\quad w\in\R^n.
$$
To this end, observe that
$$
\la w_k, \nabla^2 f(x_k)w_k \ra - \la w, Hw \ra = \la w_k - w, \nabla^2 f(x_k) (w_k + w) \ra + \la w, (\nabla^2 f(x_k) - H)w \ra. $$
Since the sequence $\{\nabla^2 f(x_k)\}$ is bounded, we have that 
$$|
\la w_k - w, \nabla^2 f(x_k) (w_k + w) \ra| \le \|w_k - w\|\cdot\|w_k + w\|. \|\nabla^2 f(x_k)\| \to 0. 
$$
Combining the latter with the the convergence $\la w_k, \nabla^2 f(x_k)w_k \ra \to \la w, Hw \ra$ brings us to
$$
\la w, (\nabla^2 f(x_k) - H)w \ra \to 0\;\mbox{ for all }\;w\in\R^n. 
$$
Choosing $w: = e_i, i = 1,\ldots,n$, tells us that all the entries on the diagonal of $\nabla^2 f(x_k) - H$ converge to $0$. Select finally $w: = e_i + e_j$ for $i, j = 1,\ldots,n$, we see that all other entries also converge to $0$ as $k\to\infty$, which therefore completes the proof of the theorem.
\end{proof}

\begin{Remark}\rm The result in \eqref{eq:hessianquad} does not hold in general if we remove the subdifferential continuity of $f$ at $\bar{x}$ for $\bar{v}$. Indeed, taking the function $f$ in Example~\ref{exam:quadandquads} gives us $\overline{\nabla}^2 f(0) =\{0,2\}$, which yields
$$
\Big\{ \frac{1}{2}q_H\;\Big|\;H \in \overline{\nabla}^2 f(\bar{x})\Big\}=\big\{q_{[0]}, q_{[1]}\big\},
$$
while we have $q_{[0]} \notin \qua f(0|0)$.
\end{Remark}

The next example shows that when $f$ is not of class $\mathcal{C}^{1,1}$, the inclusion in \eqref{eq:hessianquad} could be {\em strict}, even if $f$ is $\mathcal{C}^1$-smooth and strictly convex.

\begin{Example} Consider $f: \R \to \R$ defined by 
$$
f(x): = \left\{ \begin{array}{ll}
 x^2,  &  x < 0,\\
x^{3/2},  & x \ge 0,
\end{array} \right.
$$
with $\bar{x} = 0$ and $\nabla f(0) = 0$. Then $f$ is $\mathcal{C}^1$-smooth everywhere and strictly convex. The quadratic bundle of $f$ at $0$ for $0$ is calculated by
$$ 
\quad \qua f(0|0) =\big\{\delta_{\{0\}}, q_{[1]}\big\},
$$
and the Hessian bundle is $\overline{\nabla}^2 f(0) = \{2\}$. Therefore, \eqref{eq:hessianquad} holds as a strict inclusion in this case.
\end{Example}
\begin{proof} 
It is clear that $f$ is $\mathcal{C}^{1}$-smooth everywhere with 
$$
\nabla f(x) = f'(x) = \left\{\begin{array}{ll}
2x,  &x \le 0,  \\
3/2x^{1/2}, & x > 0,
\end{array}\right. 
$$
but not of class $\mathcal{C}^{1,1}$ around $\ox=0$ and not twice differentiable at this point. The strict convexity of $f$ follows from the fact that $f'$ is strictly increasing on $\R$. The Hessian at all nonzero points are given by
\begin{equation}\label{eq:hessian_notC11}
\nabla^2 f(x) = f''(x) =  \left\{ \begin{array}{ll}
2, & x < 0, \\
3/4x^{-1/2},  & x > 0.
\end{array} \right.
\end{equation}
To find $\overline{\nabla}^2 f(0)$, take any sequence $x_k \to 0$ such that $\nabla^2 f(x_k)$ converges to a finite number. It easily follows that $x_k < 0$ for all large $k$, since $\nabla^2 f(x_k) \to \infty$ for any $x_k \downarrow 0$. Thus the limit of $\nabla^2 f(x_k)$ is $2$. It follows from the above that $2 \in \overline{\nabla}^2 f(0)$, and we conclude in fact that $\overline{\nabla}^2 f(0) = \{2\}$.\vspace*{0.05in}

Let us now verify the generalized twice differentiability of $f$ at $0$ for $0$. We have for all $t > 0$ that
$$
\Delta_t^2 f(0|0)(w) = \dfrac{f(0 + tw) - f(0) - t0w}{\frac{1}{2}t^2} =\left\{\begin{array}{ll}
2w^2,  & w \le 0,  \\
\dfrac{2w^{3/2}}{t^{1/2}},  & w > 0. 
\end{array} \right.
$$
Passing to the limits as $t\dn 0$ gives us
$$
d^2 f(0|0)(w) = \liminf\limits_{\substack{t \downarrow 0 \\ w' \to w}} \Delta_t^2 f(0|0)(w') =  \left\{ \begin{array}{ll}
2w^2,   & w \le 0, \\
\infty,   & w > 0.
\end{array} \right.
$$
Since $d^2 f(0|0)$ is not a generalized quadratic form, $f$ is not generalized twice differentiable at $0$ for $0$. However, $f$ is twice differentiability of $f$ in the classical (and hence generalized) for all $x\ne 0$, and hence the set $\Omega_f$ defined in \eqref{eq:tapgtd} is given by
$$
\Omega_f =\big\{(x,\nabla f(x))\;\big|\;x \ne 0\big\}. 
$$
Take any $q \in \qua f(0|0)$ and find $(x_k, v_k) \xrightarrow{\Omega_f} (0,0)$ such that the sequence of $\frac{1}{2}d^2 f(x_k|v_k)$ epigraphically converges to $q$. Then we have that $x_k \to 0$ with $x_k \ne 0$ for all $k$. Consider the following two possibilities:
\begin{itemize}
\item[\textbf{(i)}] There exists a subsequence $\{x_{k_m}\}$ such that $x_{k_m} < 0$ for all $m$. Then we have the functions
$$
\frac{1}{2}d^2 f(x_{k_m}|v_{k_m}) = \frac{1}{2}\cdot q_{[2]} = q_{[1]},
$$ 
which epigraphically converge as $m\to\infty$ to the function $q_{[1]}$, and so $q = q_{[1]}$.

\item[\textbf{(ii)}] There exists no subsequence $\{x_{k_m}\}$ with $x_{k_m} < 0$ for all $m$, and so $x_k > 0$ for all large $k$. Let us show that the functions 
$$
\frac{1}{2}d^2 f(x_k|v_k)(w) = \frac{3}{4\sqrt{x_k}}w^2
$$ 
epigraphically converge to $q = \delta_{\{0\}}$. If $w \ne 0$, then any sequence $w_k \to w$ satisfies 
$$ 
\lim\limits_{k \to \infty} d^2 f(x_k|v_k)(w_k) = \lim\limits_{k \to \infty} \frac{3}{4\sqrt{x_k}}w_k^2 = \infty.
$$
Appealing to Proposition~\ref{prop:epichar} yields $q(w) =\infty$ for all $w \ne 0$, and thus
$$
q(0) = \min\big\{ \alpha \in \overline{\mathbb{\R}}\;\big|\;\exists\, w_k \to 0\;\mbox{ with }\;\liminf d^2 f(x_k|v_k)(w_k) = \alpha\big\}= 0.
$$
\end{itemize}
We get that $q_{[1]}$ and $\delta_{\{0\}}$ are all possible candidates for $q \in \qua f(0|0)$. It is easy to check that these functions indeed belong to $\qua f(0|0)$. Therefore, the formula for $\qua f(0|0)$ is verified.
\end{proof}

In the rest of this section, we demonstrate that quadratic bundles provide efficient tools to study {\em prox-regular} functions by showing that the functions from this class are always {\em twice differentiable with respect to quadratic bundles}.

\begin{Theorem}\label{prop:quadproxne} Let $f: \R^n \to \overline{\R}$ be prox-regular at $\bar{x}$ for $\bar{v} \in \partial f(\bar{x})$. Then the quadratic bundle $\qua f(\bar{x}|\bar{v})$ is nonempty.
\end{Theorem}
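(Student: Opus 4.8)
The plan is to produce an explicit member of $\qua f(\bar{x}|\bar{v})$ by feeding a carefully chosen approximating sequence into Definition~\ref{defi:quadbund}, combining the Moreau-envelope characterization of generalized twice differentiability with the compactness of families of generalized quadratic forms under epi-convergence. Since prox-regularity and the quadratic bundle are purely local notions, I would first reduce to the case in which $f$ is also prox-bounded: replacing $f$ by $f+\delta_C$, where $C$ is a closed ball around $\bar{x}$ of radius smaller than the prox-regularity radius, leaves $\partial f$, the second subderivatives $d^2 f(x|v)$ for $(x,v)$ near $(\bar{x},\bar{v})$, the set $\Omega_f$ near $(\bar{x},\bar{v})$, and hence $\qua f(\bar{x}|\bar{v})$ unchanged, while $f$ restricted to $C$ is bounded below (l.s.c.\ on a compact set), so $f+\delta_C$ is prox-bounded. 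Let $r\ge 0$ be a level for which $f$ is $r$-level prox-regular at $\bar{x}$ for $\bar{v}$, and fix $\lambda\in(0,1/r)$.

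Next I would build the approximating sequence through the Moreau envelope. By Lemma~\ref{prop:prox-mor-pr}, $e_\lambda f$ is of class $\mathcal{C}^{1,1}$ on a neighborhood of $\bar{z}:=\bar{x}+\lambda\bar{v}$, so its Hessian bundle $\overline{\nabla}^2 e_\lambda f(\bar{z})$ is nonempty by \cite[Theorem~13.52]{Rockafellar98}; in particular there is a sequence $z_k\to\bar{z}$ at which $e_\lambda f$ is twice differentiable in the classical sense. Put $v_k:=\nabla e_\lambda f(z_k)$ and $x_k:=z_k-\lambda v_k$. Lemma~\ref{lem:zkvkxk} then yields $v_k\in\partial f(x_k)$, $(x_k,v_k)\to(\bar{x},\bar{v})$, and $f(x_k)\to f(\bar{x})$. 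Applying Corollary~\ref{cor:gtdtwicediff} at $(\bar{x},\bar{v})$ produces an $f$-attentive $\varepsilon$-localization $T_\varepsilon$ of $\partial f$ around $(\bar{x},\bar{v})$ for which generalized twice differentiability of $f$ at $(x,v)$ is equivalent to twice differentiability of $e_\lambda f$ at $x+\lambda v$ whenever $(x,v)\in\gph T_\varepsilon$; since $(x_k,v_k)\to(\bar{x},\bar{v})$ with $f(x_k)\to f(\bar{x})$, we have $(x_k,v_k)\in\gph T_\varepsilon$ for all large $k$, and the twice differentiability of $e_\lambda f$ at $z_k=x_k+\lambda v_k$ then gives $(x_k,v_k)\in\Omega_f$. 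Hence $(x_k,v_k)\xrightarrow{\Omega_f}(\bar{x},\bar{v})$ with $f(x_k)\to f(\bar{x})$.

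Finally I would pass to a convergent subsequence of the associated second subderivatives and identify the limit as an element of the quadratic bundle. Set $q_k:=\tfrac12 d^2 f(x_k|v_k)$, which is a generalized quadratic form by Definition~\ref{defi:quaddiff}. By Corollary~\ref{cor:d2prox}, after shrinking the localization, $d^2 f(x_k|v_k)(w)\ge -r\|w\|^2$ for all $w\in\R^n$ and all large $k$, i.e.\ $q_k(w)\ge -\tfrac{r}{2}\|w\|^2$. Proposition~\ref{prop:epiconv_gqf}\textbf{(i)} lets me extract a subsequence $\{q_{k_j}\}$ with $q_{k_j}\xrightarrow{e}q\not\equiv\infty$, and Proposition~\ref{prop:epiconv_gqf}\textbf{(ii)} guarantees that this epi-limit $q$ is again a generalized quadratic form. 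Along this subsequence we still have $(x_{k_j},v_{k_j})\xrightarrow{\Omega_f}(\bar{x},\bar{v})$, $f(x_{k_j})\to f(\bar{x})$, and $\tfrac12 d^2 f(x_{k_j}|v_{k_j})\xrightarrow{e}q$, so $q\in\qua f(\bar{x}|\bar{v})$ by Definition~\ref{defi:quadbund}, proving nonemptiness. I expect the main obstacle to be the bookkeeping at the limit step: one must simultaneously keep $(x_k,v_k)$ inside $\Omega_f$ with $f(x_k)\to f(\bar{x})$ and ensure that the epi-limit of the $q_k$ is not merely l.s.c.\ but a genuine generalized quadratic form. This is exactly where $r$-level prox-regularity is used twice, once to make $e_\lambda f$ of class $\mathcal{C}^{1,1}$ (so the approximating sequence exists and lands in $\Omega_f$) and once, via Corollary~\ref{cor:d2prox}, to supply the uniform lower bound $d^2 f(x_k|v_k)\ge -r\|\cdot\|^2$ that, through Proposition~\ref{prop:epiconv_gqf}, forces the epi-limit into the class of generalized quadratic forms.
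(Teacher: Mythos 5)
Your proposal is correct and follows essentially the same route as the paper's proof: build $(x_k,v_k)$ from points of twice differentiability of the Moreau envelope via Lemma~\ref{lem:zkvkxk}, transfer to generalized twice differentiability of $f$ via Corollary~\ref{cor:gtdtwicediff}, and extract an epi-convergent subsequence whose limit is a generalized quadratic form via Proposition~\ref{prop:epiconv_gqf} together with the lower bound from Corollary~\ref{cor:d2prox}. Your explicit reduction to the prox-bounded case by adding $\delta_C$ is a useful extra precaution (the paper invokes Lemma~\ref{prop:prox-mor-pr} without commenting on prox-boundedness), but otherwise the arguments coincide.
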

\begin{proof} If $f: \R^n \to \overline{\R}$ is prox-regular at $\bar{x}$ for $\bar{v} \in \partial f(\bar{x})$, then  Lemma~\ref{prop:prox-mor-pr} allows us to fix a small $\lambda > 0$ so that the associated Moreau envelope $e_{\lambda} f$ is of class $\mathcal{C}^{1,1}$ around $\bar{z} := \bar{x} + \lambda \bar{v}$. Consequently, the set $\overline{\nabla}^2 e_{\lambda} f(\bar{z})$ is nonempty. Pick any $H \in \overline{\nabla}^2 e_{\lambda} f(\bar{z})$ and find a sequence $z_k \to \bar{z}$ such that $e_{\lambda} f$ is twice differentiable at $z_k$ with $z_k \to H$ for all $k\in\N$. Define the vectors
$$
v_k := \nabla e_{\lambda} f(z_k), \quad x_k := z_k - \lambda v_k\;\mbox{ whenever }\;k\in\N.
$$
Lemma~\ref{lem:zkvkxk} tells us that $(x_k,v_k) \xrightarrow{\gph \partial f} (\bar{x},\bar{v})$ and $f(x_k) \to f(\bar{x})$ as $k\to\infty$. Furthermore, we deduce from 
Corollary~\ref{cor:gtdtwicediff}, by the twice differentiability of $e_\lambda f$ at $z_k = x_k + \lambda v_k$, that $f$ is generalized twice differentiable at $x_k$ for $v_k$ with large $k$. 
Remark~\ref{prop:extractepi}\textbf{(i)} allows us to extract a subsequence of the functions $\dfrac{1}{2}d^2 f(x_k|v_k)$ that epigraphically converges to some function $q$. Observe that we keep the convergence properties along the aforementioned subsequence: $(x_{k_m},v_{k_m}) \xrightarrow{\gph \Omega_f} (\bar{x},\bar{v})$ and $f(x_{k_m}) \to f(\bar{x})$ as $m\to\infty$. It follows from Remark~\ref{prop:extractepi}\textbf{(ii)} that $q$ is a generalized quadratic form. Finally, Definition~\ref{defi:quadbund} confirms that $q \in \qua f(\bar{x}|\bar{v})$, and so we are done with the proof of the theorem.
\end{proof}

The last example here demonstrates that the conclusion of Theorem~\ref{prop:quadproxne} fails if $f$ is merely $\mathcal{C}^1$-smooth and not prox-regular.

\begin{Example} Let $f: \R \to \R$ be given by $f(x):= -|x|^{3/2}$, $x \in \R$. Then $f$ is $\mathcal{C}^1$-smooth around $x = 0$ and $f'(0) = 0$ but $\qua f(0|0) = \emptyset$.
\end{Example}
\begin{proof}
We get that $f'(x) = -\dfrac{3}{2}\sqrt{|x|}$, that $f$ is not twice differentiable at $0$, and that $f''(x) = -\dfrac{3}{4\sqrt{|x|}}$ when $x\ne 0$. This brings us to the formula
$$
d^2 f(x|f'(x))(w) = -\dfrac{3}{4\sqrt{|x|}} w^2\;\mbox{ for all }\;x\ne 0\;\mbox{ and }\;w\in\R.
$$
Moreover, we get for $t > 0$ and $w \in \R$ that
$$
\Delta_t^2 f(0|0)(w) = \dfrac{f(0 + tw) - f(0) - 0.tw}{\frac{1}{2}t^2} = \frac{-3|w|^{3/2}}{2t^{1/2}}.
$$
It easily follows therefore that
$$
d^2 f(0|0)(w) = \liminf\limits_{\substack{t \downarrow 0\\w' \to w}} \Delta_t^2 f(0|0)(w') = -\infty, \quad w \in \R. 
$$
In this way, we arrive at the representation
$$
\Omega_f =\big\{(x,f'(x))\big|\;x \ne 0\big\}. 
$$
Assume further that $q \in \qua f(0|0)$. Then $q$ is a generalized quadratic form, and there exists a sequence $(x_k,v_k) \xrightarrow{\Omega_f} (0,0)$ such that $\dfrac{1}{2}d^2 f(x_k|v_k) \xrightarrow{e} q$; in particular, $x_k \ne 0$ for all $k$. We intend to show that there exists $w \in \R$ such that $q(w) = -\infty$. Indeed, for any $w \ne 0$ and all $w_k \to w$ it follows that
$$
\lim d^2 f(x_k|v_k)(w_k) = \lim\Big( - \dfrac{3}{4\sqrt{|x_k|}}w_k^2\Big) = -\infty.
$$
Employing Proposition~\ref{prop:epichar} brings us to $q(w) = -\infty$, which is a contradiction. Therefore, the quadratic bundle $\qua f(0|0)$ is empty.
\end{proof}

\section{Conclusions and Future Research}\label{sec:conclusion}

This paper contributes to the investigation of generalized twice differentiability and quadratic bundles of extended-real-valued functions. These recently introduced second-order notions belong to the primal-dual realm of variational analysis with great potential for applications. We reveal new properties of generalized twice differentiable functions, establish their crucial characterization via  classical twice differentiability of the associated Moreau envelopes, and apply them to the study of quadratic bundles. The obtained results on generalized twice differentiability  of extended-real-valued functions lead us to establishing the fundamental property of quadratic bundles asserting their nonemptiness for the broad class of prox-regular functions, which are the most important in second-order variational analysis.\vspace*{0.05in} 

Our future research will concentrate on applications of the developed tools of generalized twice differentiability and quadratic bundles to new primal-dual characterizations of (strong) variational convexity of extended-real-valued functions and tilt stability of local minimizers with applications to (strong) variational sufficiency in optimization and numerical algorithms of the first and second orders. We also plan to proceed with the study of infinite-dimensional extensions of the developed methods and results.

\end{document}